\numberwithin{equation}{section}
\newtheorem{theorem}{Theorem}[section]
\newtheorem{corollary}[theorem]{Corollary}
\newtheorem{lemma}[theorem]{Lemma}
\newtheorem{prop}[theorem]{Proposition}
\theoremstyle{definition}
\newtheorem{remark}[theorem]{Remark}
\theoremstyle{definition}
\theoremstyle{definition}
\def\dashint{\operatorname%
{\,\,\text{\bf-}\kern-.98em\DOTSI\intop\ilimits@\!\!}}
\def\\det{\text{det}}
\def\.5{\frac{1}{2}}
\newcommand{\RN}[1]{%
  \textup{\uppercase\expandafter{\romannumeral#1}}%
}
\renewcommand{\epsilon}{\varepsilon}
\newcounter{marnote}
\begin{document}
\title[Optimal boundary gradient estimates ]{Optimal boundary gradient estimates for Lam\'{e} systems with partially infinite coefficients }
\author[J.G. Bao]{Jiguang Bao}
\address[J.G. Bao]{School of Mathematical Sciences, Beijing Normal University, Laboratory of MathematiCs and Complex Systems, Ministry of Education, Beijing 100875, China.}
\email{jgbao@bnu.edu.cn}
\thanks{J.G. Bao was partially supported by NSFC (11371060), and Beijing Municipal Commission of Education for the Supervisor of Excellent Doctoral Dissertation (20131002701). }
\author[H. J. Ju]{Hongjie Ju}
\address[H.J. Ju]{School of  Sciences, Beijing University of Posts  and Telecommunications,
Beijing 100876, China}
\email{hjju@bupt.edu.cn}
\thanks{H.J. Ju was partially supported by NSFC (11301034) (11471050).}

\author[H.G. Li]{Haigang Li}
\address[H.G. Li]{School of Mathematical Sciences, Beijing Normal University, Laboratory of MathematiCs and Complex Systems, Ministry of Education, Beijing 100875, China.}
\email{hgli@bnu.edu.cn}
\thanks{H.G. Li was partially supported by  NSFC (11571042), Fok Ying Tung Education Foundation (151003), and the Fundamental Research Funds for the Central Universities.}

%\footnote{}

\date{\today} % delete this line to display the current date

%%% BEGIN DOCUMENT

\maketitle
%\tableofcontents

\begin{abstract}
In this paper, we derive the pointwise upper bounds and lower bounds on the gradients of solutions to the Lam\'{e} systems with partially infinite coefficients as the surface of discontinuity of the coefficients of the system is located very close to the boundary. When the distance tends to zero, the optimal blow-up rates of the gradients are established for inclusions with arbitrary shapes and in all dimensions.
\end{abstract}

\section{Introduction and main results}

It is a common phenomenon that high concentration of extreme mechanical loads occurs in high-contrast fiber-reinforced composites in the zones that  include the narrow regions between two adjacent inclusions and the thin gaps between the inclusions and the exterior boundary of the background medium. Extreme loads are always amplified by such composite microstructure, which will cause failure or fracture initiation. Stimulated by the well-known work of Babu\u{s}ka et al \cite{BASL}, where computational analysis of damage and fracture in fiber composite systems is investigated, we consider the Lam\'{e} system in linear elasticity with partially infinite coefficients to characterize the high-contrast composites. This paper is a continuation of \cite{BLL,BLL2}, where the upper bound of the gradient estimate for two adjacent inclusions is established, which can be regarded as interior estimates for this problem. 

Due to the interaction from the boundary data, solutions of these systems become more irregular near the boundary. In this paper, we mainly investigate the boundary gradient estimates for the Lam\'{e} system with partially infinite coefficients  when the inclusion is spaced very close to the matrix exterior boundary. The novelty of these estimates is that they give not only the pointwise upper bounds but also lower bounds of the gradient, which shows that the blow-up rate of the gradient with respect to the distance between the inclusion and the matrix exterior boundary that we obtain is optimal. The role of the boundary data is embodied in these estimates. Especially, an explicit factor that determines whether the blow-up occurs or not is singled out in the lower bound estimates. We would like to emphasize that the gradient estimates obtained in this paper hold for inclusions with arbitrary convex shapes and in all dimensions. 

Let $D\subset\mathbb{R}^{d}(d\geq2)$ be a bounded open set with $C^{2, \gamma}$ boundary, and $D_{1}$ be a strictly convex open set in $D$
with $C^{2, \gamma}$ boundary, $0<\gamma<1$, and spaced very close to the boundary $\partial{D}$. More precisely,
\begin{equation}\label{Domain}
\begin{array}{l}
\displaystyle \overline{D}_{1}\subset{D},\quad \hbox{the principle curvatures of }\ \partial{D},\partial{D}_{1}\geq \kappa_0>0,\\
\displaystyle \varepsilon:=dist (D_{1}, \partial{D})>0,
\end{array}
\end{equation}
where $\kappa_0$ is constant independent of $\epsilon$. We also assume that the $C^{2,\gamma}$ norms of $\partial{D}_{1}$ are bounded by some constant independent of $\varepsilon$. This implies that $D_{1}$ contains a ball of radius $r_{0}^{*}$ for some constant $r_{0}^{*}>0$ independent of $\varepsilon$. See Figure \ref{fig:1.1}.

\begin{figure}[t]
\centering
\includegraphics[width=2.0in]{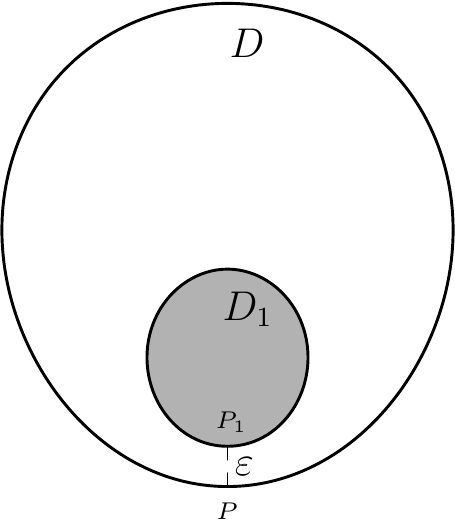}
\caption{\small One inclusion close to the boundary.}
\label{fig:1.1}
\end{figure}

Denote $$\Omega:=D\setminus\overline{D}_{1}.$$
We assume that $\Omega$ and $D_{1}$ are occupied, respectively, by two different isotropic and homogeneous materials with different Lam\'{e} constants $(\lambda, \mu)$ and $(\lambda_1, \mu_1)$. Then the elasticity tensors for the background and the inclusion can be written, respectively, as $\mathbb{C}^0$ and $\mathbb{C}^1$, with
$$C_{ijkl}^0=\lambda\delta_{ij}\delta_{kl} +\mu(\delta_{ik}\delta_{jl}+\delta_{il}\delta_{jk}),$$
and
$$C_{ijkl}^1=\lambda_1\delta_{ij}\delta_{kl} +\mu_1(\delta_{ik}\delta_{jl}+\delta_{il}\delta_{jk}),$$
where $i, j, k, l=1,2,\cdots,d$ and $\delta_{ij}$ is the kronecker symbol: $\delta_{ij}=0$ for $i\neq j$, $\delta_{ij}=1$ for $i=j$.

Let $u=(u^1, u^2,\cdots,u^{d})^T:D\rightarrow\mathbb{R}^{d}$ denote the displacement field. For a given vector valued function $\varphi=(\varphi^1,\varphi^2,\cdots,\varphi^{d})^{T}$, we consider the following Dirichlet problem for the Lam\'{e} system:
 \begin{align}\label{Lame}
\begin{cases}
\nabla\cdot \left((\chi_{\Omega}\mathbb{C}^0+\chi_{D_{1}}\mathbb{C}^1)e(u)\right)=0,&\hbox{in}~~D,\\
u=\varphi, &\hbox{on}~~\partial{D},
\end{cases}
\end{align}
where $\chi_{\Omega}$ is the characteristic function of $\Omega\subset \mathbb{R}^{d}$,
$$e(u)=\frac{1}{2}(\nabla u+(\nabla u)^T)$$
is the strain tensor.

Assume that the standard ellipticity condition holds for (\ref{Lame}), that is,
\begin{align*}%\label{ellipticity}
\mu>0,\quad d\lambda+2\mu>0,\quad \mu_1>0,\quad d\lambda_1+2\mu_1>0.
\end{align*}
For $\varphi\in H^1(D; \mathbb{R}^{d})$, it is well knowm that there exists a unique solution $u\in H^1(D; \mathbb{R}^{d})$ to the Dirichlet problem (\ref{Lame}), which is also the minimizer of the energy functional
$$J_1[u]:=\frac{1}{2}\int_\Omega \left((\chi_{\Omega}\mathbb{C}^0+\chi_{D_{1}}\mathbb{C}^1)e(u), e(u)\right)dx $$
on 
\begin{align*}%\label{H1} 
H^1_\varphi(D; \mathbb{R}^{d}):=\left\{u\in  H^1(D; \mathbb{R}^{d})~\big|~ u-\varphi\in  H^1_0(D; \mathbb{R}^{d})\right\}.
\end{align*}

We introduce the linear space of rigid displacement in $\mathbb{R}^{d}$:
$$\Psi:=\{\psi\in C^1(\mathbb{R}^{d}; \mathbb{R}^{d})\ |\ \nabla\psi+(\nabla\psi)^T=0\}.$$
With $e_{1},\cdots,e_{d}$ denoting the standard basis of $\mathbb{R}^{d}$, $$\left\{~e_{i},~x_{j}e_{k}-x_{k}e_{j}~\big|~1\leq\,i\leq\,d,~1\leq\,j<k\leq\,d~\right\}$$ is a basis of $\Psi$. Denote this basis of $\Psi$ as $\left\{\psi_{\alpha}~\big|~\alpha=1,2,\cdots,\frac{d(d+1)}{2}\right\}$.

For fixed $\lambda$ and $\mu$ satisfying $\mu>0$ and $d\lambda+2\mu>0$, denote $u_{\lambda_1,\mu_1}$ as the solution of (\ref{Lame}). Then similarly as in the Appendix of \cite{BLL}, we also have
\begin{align*}%\label{limit}
u_{\lambda_1,\mu_1}\rightarrow u\quad\hbox{in}\ H^1(D; \mathbb{R}^{d}),\quad \hbox{as}\ \min\{\mu_1, d\lambda_1+2\mu_1\}\rightarrow\infty,
\end{align*}
where $u$ is a $H^1(D; \mathbb{R}^{d})$ solution of
 \begin{align}\label{main1}
\begin{cases}
\mathcal{L}_{\lambda, \mu}u:=\nabla\cdot(\mathbb{C}^0e(u))=0,\quad&\hbox{in}\ \Omega,\\
u|_{+}=u|_{-},&\hbox{on}\ \partial{D}_{1},\\
e(u)=0,&\hbox{in}~~D_{1},\\
\int_{\partial{D}_{1}}\frac{\partial u}{\partial \nu_0}\Big|_{+}\cdot\psi_{\alpha}=0,&\alpha=1,2,\cdots,\frac{d(d+1)}{2},\\
u=\varphi,&\hbox{on}\ \partial{D},
\end{cases}
\end{align}
where
\begin{align*}
\frac{\partial u}{\partial \nu_0}\Big|_{+}&:=(\mathbb{C}^0e(u))\vec{n}=\lambda(\nabla\cdot u)\vec{n}+\mu(\nabla u+(\nabla u)^T)\vec{n},
\end{align*}
and $\vec{n}$ is the unit outer normal of $D_{1}$. Here and throughout this paper the subscript $\pm$ indicates the limit from outside and inside the domain, respectively. The existence, uniqueness and regularity of weak solutions to (\ref{main1}) are proved in the Appendix of \cite{BLL}, where multiple inclusions case is studied. In particular, the $H^1$ weak solution to (\ref{main1}) is in $C^1(\overline{\Omega};\mathbb{R}^{d})\cap C^1(\overline{D}_{1};\mathbb{R}^{d})$. The solution is also the unique function which has the least energy in appropriate functional spaces, characterized by
$$I_\infty[u]=\min_{v\in\mathcal{A}}I_\infty[v],\qquad\,I_\infty[v]:=\frac{1}{2}\int_{\Omega}(\mathbb{C}^0e(v), e(v))dx,$$
where
\begin{equation}\label{def_A}
\mathcal{A}:=\left\{v\in H^1_\varphi(D;\mathbb{R}^{d}) ~\Big|~ e(v)=0\ \ \hbox{in}~~D_{1}\right\}.
\end{equation}

It is well known that for any open set $O$ and $u, v\in C^2(O)$,
\begin{align}\label{eu}
\int_O(\mathbb{C}^0e(u), e(v))dx=-\int_O\left(\mathcal{L}_{\lambda, \mu}u\right)\cdot v+\int_{\partial O}\frac{\partial u}{\partial \nu_0}\Big|_{+}\cdot v.
\end{align}
A calculation gives
\begin{align*}
\left(\mathcal{L_{\lambda, \mu}}u\right)_k=\mu\Delta u_k+(\lambda+\mu)\partial_{x_k}(\nabla\cdot u),\quad k=1, \cdots, d.
\end{align*}
We assume that for some $\delta_0>0$,
\begin{align}\label{delta}
\delta_0\leq \mu, d\lambda+2\mu\leq\frac{1}{\delta_0}.
\end{align}
It is clear that there exist two points $P_1\in\partial{D}_{1}$ and $P\in\partial{D}$, such that 
$$\mathrm{dist}(P, P_1)=\mathrm{dist}(D, \partial{D})=\varepsilon.$$
We use $\overline{P_{1}P}$ to denote the line segment connecting $P_{1}$ and $P$. Denote
$$\rho_{d}(\varepsilon)=
\begin{cases}
\sqrt{\varepsilon},\quad&\mbox{if}~~d=2,\\
\frac{1}{|\log\varepsilon|},&\mbox{if}~~d=3,\\
1,&\mbox{if}~~d\geq4.
\end{cases}$$

The first of our results concerns an upper bound of the gradient of solutions to (\ref{main1}). In brief, this result asserts that the blow up rate of $|\nabla u|$ is, respectively,
 $\epsilon^{-1/2}$ in dimension $d=2$,
$(\epsilon|\log\epsilon|)^{-1}$ in
dimension $d=3$,  and  $\epsilon^{-1}$  in dimension $d\ge 4$, which is exactly the same as the perfect conductivity problem, see e.g. \cite{BLY1}.
 
\begin{theorem}(Upper bound).\label{thm1.1}
Assume that $\Omega, D\subset\mathbb{R}^{d}$, $\varepsilon$ are defined in (\ref{Domain}), $\varphi\in C^2(\partial{D}; \mathbb{R}^{d})$. Let $u\in H^1(D; \mathbb{R}^{d})\cap C^1(\overline{\Omega}; \mathbb{R}^{d})$ be a solution to (\ref{main1}). Then for $0<\epsilon<1/2$,  we have
\begin{align}
|\nabla u(x)|&\leq
\frac{C\rho_{d}(\varepsilon)}{\varepsilon}\|\varphi\|_{C^2(\partial{D}; \mathbb{R}^{d})},\quad x\in\Omega,\label{gradient_in}
\end{align}
and
\begin{align}
|\nabla u(x)|&\leq C\|\varphi\|_{C^2(\partial{D}; \mathbb{R}^{d})},\quad x\in D_{1},\label{gradient2}
\end{align}
where $C$ depends only on $\kappa_{0},\delta_{0},d$, the $C^{2,\gamma}$ norm of $\partial{D}_{1}$ and $\partial{D}$, but not on $\varepsilon$.
\end{theorem}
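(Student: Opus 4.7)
My plan is to prove Theorem \ref{thm1.1} by decomposing $u$ along the finite-dimensional space of rigid displacements that it takes on $D_1$, estimating each piece in $\Omega$ via explicit comparison functions in the narrow gap, and then controlling the decomposition coefficients through the integral conditions in (\ref{main1}).

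Since $e(u)=0$ in $D_1$, the restriction $u|_{\overline{D}_1}$ is a rigid displacement, so I may write $u|_{\overline{D}_1}=\sum_{\alpha=1}^{d(d+1)/2} C^\alpha\psi_\alpha$ with unknown constants $C^\alpha\in\bR$. In $\Omega$ I then decompose $u=v_0+\sum_\alpha C^\alpha v_\alpha$, where $v_\alpha\in H^1(\Omega;\bR^d)$ solves $\mathcal{L}_{\lambda,\mu}v_\alpha=0$ in $\Omega$ with $v_\alpha=\psi_\alpha$ on $\partial D_1$ and $v_\alpha=0$ on $\partial D$, and $v_0$ solves the same equation with $v_0=0$ on $\partial D_1$ and $v_0=\varphi$ on $\partial D$. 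This reduces the theorem to (i) pointwise gradient bounds for the $v_\alpha$ and $v_0$, and (ii) an estimate $|C^\alpha|\leq C\|\varphi\|_{C^2(\partial D;\bR^d)}$; once $|C^\alpha|$ is bounded, the estimate (\ref{gradient2}) in $D_1$ follows from the explicit form of $\nabla\bigl(\sum_\alpha C^\alpha\psi_\alpha\bigr)$.

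For (i), choose local coordinates adapted to the closest-approach pair $(P,P_1)$ so that $\partial D=\{x_d=h(x')\}$ and $\partial D_1=\{x_d=\varepsilon+h_1(x')\}$ over $\{|x'|\leq R_0\}$, with gap height $\delta(x'):=\varepsilon+h_1(x')-h(x')\asymp\varepsilon+|x'|^2$ by the curvature assumption. As a comparison in the narrow region I take the linear interpolation $\bar v_\alpha(x):=\frac{x_d-h(x')}{\delta(x')}\psi_\alpha(x)$, suitably cut off and extended to $\Omega$ to match the boundary data of $v_\alpha$. A direct computation gives $|\nabla\bar v_\alpha(x)|\lesssim|\psi_\alpha(x)|/\delta(x')+O(1)$ in the gap; for translation generators $|\psi_\alpha|\equiv 1$ and the blow-up is $\lesssim 1/\delta(x')$, while for rotation generators centered at $P_1$ the factor $|\psi_\alpha(x)|=O(|x-P_1|)$ keeps the blow-up of lower order. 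The residual $w_\alpha:=v_\alpha-\bar v_\alpha$ satisfies $\mathcal{L}_{\lambda,\mu}w_\alpha=-\mathcal{L}_{\lambda,\mu}\bar v_\alpha$ with zero boundary values; a weighted energy inequality combined with a rescaling that normalizes each thin strip $|x'-x_0'|\lesssim\sqrt{\delta(x_0')}$ to unit size, together with standard Schauder estimates up to the boundary for the Lam\'e system, should give $|\nabla w_\alpha|\leq C$ uniformly. The same argument applied with $\varphi$ in place of $\psi_\alpha$ yields $|\nabla v_0(x)|\leq C\|\varphi\|_{C^2}/\delta(x')$.

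For (ii), substitute the decomposition into the flux conditions in (\ref{main1}) and apply the integration-by-parts identity (\ref{eu}) over $\Omega$. This produces the linear system
\[\sum_\alpha a_{\alpha\beta}C^\alpha=Q_\beta,\qquad a_{\alpha\beta}:=\int_\Omega\bigl(\bC^0 e(v_\alpha),e(v_\beta)\bigr)\,dx,\qquad Q_\beta:=-\int_\Omega\bigl(\bC^0 e(v_0),e(v_\beta)\bigr)\,dx,\]
with $(a_{\alpha\beta})$ symmetric positive definite. Step (i) yields $a_{\alpha\alpha}\asymp 1/\rho_d(\varepsilon)$ on translation rows, $a_{\alpha\alpha}=O(1)$ on rotation rows, and by Cauchy--Schwarz $|Q_\beta|\leq C\|\varphi\|_{C^2}/\rho_d(\varepsilon)$ on translation indices. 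Exploiting this block structure via Cramer's rule, I expect $|C^\alpha|\leq C\|\varphi\|_{C^2}$, which proves (\ref{gradient2}) and the crude pointwise bound $|\nabla u|\leq C\|\varphi\|_{C^2}/\delta(x')$. The main obstacle, and the place where the dimension-dependent factor $\rho_d(\varepsilon)$ genuinely enters, is the sharpening to the optimal rate in (\ref{gradient_in}): I would rewrite the decomposition so that the quantity driving the pointwise blow-up at the contact point is the \emph{matching defect} $\sum_\alpha C^\alpha\psi_\alpha(P)-\varphi(P)$ rather than the raw $|C^\alpha|$, and then prove this defect is $O(\rho_d(\varepsilon)\|\varphi\|_{C^2})$ by extracting the leading-order asymptotics of $a_{\alpha\beta}$ and $Q_\beta$ separately in $d=2$, $d=3$, and $d\geq 4$. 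Away from the gap, standard elliptic regularity gives the bounded gradient estimates, closing the proof.
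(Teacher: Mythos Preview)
Your strategy is essentially the paper's: decompose along rigid displacements, compare with the linear interpolant $\bar v$ in the gap, rescale thin strips to unit size for the remainder, and recognize that the sharp rate in (\ref{gradient_in}) comes from bounding the matching defect $C^\alpha-\varphi^\alpha(P)$ on the translation indices $\alpha=1,\dots,d$ by $C\rho_d(\varepsilon)$. The paper implements this last step by subtracting $\varphi(P)$ at the outset, so that its $u_0$ already vanishes at $P$ and enjoys the improved bound $|\nabla u_0|\leq C|x'|/\delta(x')+C$; your proposed ``rewriting'' amounts to exactly the same shift.

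Two points need more care. First, the remainder estimate $|\nabla w_\alpha|\leq C$ is too optimistic: the rescaled right-hand side $\mathcal{L}_{\lambda,\mu}\bar v_\alpha$ is only $O(\sqrt{\delta})$ after blow-up, and the iterated energy argument yields $|\nabla w_\alpha|\leq C/\sqrt{\delta}$ (the paper's Corollary~\ref{corol3.4}); this is still lower order than $1/\delta$ and harmless for the final bound, but it enters the off-diagonal estimates of $a_{\alpha\beta}$. Second, and more substantively, after the shift the right-hand side becomes $b_\beta=\int_{\partial D_1}\frac{\partial u_0}{\partial\nu_0}\cdot\psi_\beta$, and the bound $|b_\beta|\leq C$ for $\beta\leq d$ does \emph{not} follow from Cauchy--Schwarz in dimension $d=2$: the naive estimate gives only $|b_\beta|\lesssim\int_0^R r/(\varepsilon+r^2)\,dr\sim|\log\varepsilon|$, hence $|C^\alpha-\varphi^\alpha(P)|\leq C\sqrt{\varepsilon}\,|\log\varepsilon|$ rather than $C\sqrt{\varepsilon}$. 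The paper's Lemma~\ref{lemma4.3} closes this by writing the dangerous contribution as $\int_{|x'|<R}\frac{\nabla_{x'}\varphi(0)\cdot x'}{\varepsilon+h_1(x')-h(x')}\,dx'$ and showing it vanishes by odd symmetry after orthogonally diagonalizing $\nabla^2_{x'}(h_1-h)(0')$. Your phrase ``extracting the leading-order asymptotics of $a_{\alpha\beta}$ and $Q_\beta$'' points in the right direction, but this cancellation is the crux of the sharp estimate and should be made explicit.
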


\begin{remark}
Actually, for $d\geq2$, we have the following pointwise upper bound of $|\nabla{u}|$ in $\Omega$:
\begin{align}
|\nabla u(x)|&\leq\,C\left[
\frac{\rho_{d}(\varepsilon)}{\varepsilon+\mathrm{dist}^{2}(x,\overline{P_{1}P})}
+\left(\frac{ \mathrm{dist}(x, \overline{P_{1}P})}{\varepsilon+\mathrm{dist}^{2}(x,\overline{P_{1}P})}+1\right)\right]\|\varphi\|_{C^2(\partial{D}; \mathbb{R}^{d})}.\label{gradient1}
\end{align}
This shows that the right hand side archives its maximum at $\overline{P_{1}P}$, with value $\frac{C\rho_{d}(\varepsilon)}{\varepsilon}\|\varphi\|_{C^2(\partial{D}; \mathbb{R}^{d})}$ for $\varepsilon$ sufficiently small.
\end{remark}

In order to show that the blow-up rate of the gradients obtained in Theorem \ref{thm1.1} is  optimal, we need to investigate its lower bound. Denote $D_{1}^{*}:=\{~x\in\mathbb{R}^{d}~|~x+P_{1}\in{D}_{1}~\}$. Set $\Omega^*:=D\setminus \overline{D_{1}^{*}}$. Let $u_0^*$ be the solution of the boundary value problem:
\begin{align}\label{u00*}
\begin{cases}
  \mathcal{L}_{\lambda,\mu}u_0^*=0,\quad&
\hbox{in}\  \Omega^*,  \\
u_0^*=0,\ &\hbox{on}\ \partial{D}_{1}^*,\\
u_0^*=\varphi(x)-\varphi(\,P),&\hbox{on} \ \partial{D}.
\end{cases}
\end{align}
Define
\begin{align*}
b_{\alpha}^*:=\int_{\partial{D}_{1}^*}\frac{\partial u^*_0}{\partial \nu_0}\large|_{+}\cdot\psi_{\alpha},\quad\alpha=1,2,\cdots,\frac{d(d+1)}{2}.
\end{align*}
which is a functional of $\varphi$, playing an important role in the following establishment of lower bounds of $|\nabla u|$ on the segment $\overline{P_{1}P}$. 

\begin{theorem}(Lower bound).\label{thm1.2}
Under the assumption as in Theorem \ref{thm1.1}, let $u\in H^1(D; \mathbb{R}^{d})\cap C^1(\overline{\Omega}; \mathbb{R}^{d})$ be a solution to (\ref{main1}). Then 
\begin{itemize}
\item [$(i)$] for $d=2$, if there exists some integer $1\leq\,k_0\leq\,d$ such that $b_{k_0}^*\neq0$ and $\nabla_{x'}\varphi^{k_0}(\,P)=0$;
\item [$(ii)$] for $d=3$, if there exists some integer $1\leq\,k_0\leq\,d$ such that $b_{k_0}^*\neq0$; 
\item [$(iii)$] for $d\geq4$, if there exists some integer $1\leq\,k_0\leq\,d$ such that $b_{k_0}^*\neq0$ and $b_{\alpha}^*=0$ for all $\alpha\neq\,k_{0}$,
\end{itemize} 
then for sufficiently small $0<\varepsilon<1/2$,
$$\big|\nabla u(x)\big|\geq\frac{\rho_{d}(\varepsilon)}{C\varepsilon},\quad\,x\in\overline{P_{1}P},$$
where $C$ depends only on $\kappa_{0},\delta_{0},d$, the $C^{2,\gamma}$ norm of $\partial{D}_{1}$ and the $C^{2}$ norm of $\partial{D}$, but not on $\varepsilon$.
\end{theorem}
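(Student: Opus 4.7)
The plan is to reuse the decomposition underlying Theorem \ref{thm1.1}. Let $v_\alpha\in C^2(\overline{\Omega};\mathbb{R}^d)$ solve $\mathcal{L}_{\lambda,\mu}v_\alpha=0$ in $\Omega$ with $v_\alpha=\psi_\alpha$ on $\partial D_{1}$ and $v_\alpha=0$ on $\partial D$, and let $v_0$ solve the same PDE with $v_0=0$ on $\partial D_{1}$ and $v_0=\varphi$ on $\partial D$. Since $u|_{D_{1}}$ is a rigid motion $\sum_\alpha C^\alpha\psi_\alpha$, one has
$$u=\sum_{\alpha=1}^{d(d+1)/2}C^\alpha v_\alpha+v_0\quad\text{in }\Omega,$$
and the integral constraints in \eqref{main1} together with \eqref{eu} yield the linear system $\sum_\beta a_{\alpha\beta}C^\beta=b_\alpha$, where
$$a_{\alpha\beta}=-\int_{\partial D_{1}}\frac{\partial v_\beta}{\partial\nu_0}\bigg|_{+}\!\!\cdot\psi_\alpha=\int_{\Omega}(\mathbb{C}^0 e(v_\alpha),e(v_\beta))\,dx,\qquad b_\alpha=\int_{\partial D_{1}}\frac{\partial v_0}{\partial\nu_0}\bigg|_{+}\!\!\cdot\psi_\alpha.$$
The task reduces to extracting a quantitative lower bound on a single $C^\alpha$ and matching it with a pointwise lower bound on $\nabla v_\alpha$ along $\overline{P_{1}P}$.

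Choosing coordinates so that $\overline{P_{1}P}$ is in the $x_d$-direction and relabelling so that $k_0=d$ with $\psi_d=e_d$, I would first establish, by the same energy/test-function constructions behind the upper bound, the sharp matrix asymptotics
$$a_{dd}=\frac{c_d}{\rho_d(\varepsilon)}\bigl(1+o(1)\bigr),\quad |a_{d\beta}|+|a_{\beta d}|=o(a_{dd})\ \ (\beta\neq d),\quad |a_{\alpha\beta}|\leq C\ \ ((\alpha,\beta)\neq(d,d)),$$
with $c_d>0$ and the residual $(\frac{d(d+1)}{2}-1)$-block uniformly positive definite. Second, I would compare $b_\alpha$ with the reference quantity $b_\alpha^*$: the difference between $v_0$ and $u_0^*(\,\cdot-P_{1})+\varphi(P)$ solves a Lam\'e system whose data is small, coming from the geometric mismatch between $D_{1}$ and a translate of $D_{1}^*$ together with the observation that the constant $\varphi(P)$ produces zero flux against every $\psi_\alpha$; this gives $b_\alpha=b_\alpha^*+o(1)$ as $\varepsilon\to0$. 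Cramer's rule then yields
$$C^d=\rho_d(\varepsilon)\bigl(c_d^{-1}b_d^*+o(1)\bigr),\qquad |C^\alpha|\leq C\bigl(|b_\alpha^*|+o(1)\bigr)\quad(\alpha\neq d),$$
so the assumption $b_d^*\neq0$ forces $|C^d|\geq c\,\rho_d(\varepsilon)$ for all sufficiently small $\varepsilon$.

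The final step is the pointwise estimate on $\overline{P_{1}P}$. In the narrow gap, $v_d$ is close to the linear interpolant $(x_d-h^-(x'))/h(x')$ in its $e_d$-component, so $|\partial_{x_d}v_d^d(x)|\geq c/\varepsilon$ along $\overline{P_{1}P}$, giving a main term $|C^d\,\partial_{x_d}v_d^d|\gtrsim\rho_d(\varepsilon)/\varepsilon$. Writing
$$\nabla u=C^d\nabla v_d+\sum_{\alpha\neq d}C^\alpha\nabla v_\alpha+\nabla v_0,$$
the theorem follows once the remaining terms are absorbed into half of the main term. This \emph{no-cancellation step} is the principal obstacle, and it is what drives the case distinction. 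In case (iii), where $d\geq 4$ and $\rho_d=1$, each $|C^\alpha\nabla v_\alpha|$ is a priori of the same order $1/\varepsilon$, and the hypotheses $b_\alpha^*=0$ ($\alpha\neq k_0$) are precisely what force $C^\alpha=o(1)$ and kill those contributions; in case (ii) with $d=3$, the logarithmic gap $1/\rho_3=|\log\varepsilon|$ between the leading and subleading coefficients handles the absorption automatically; in case (i) with $d=2$, the coefficient $|C^d|\sim\sqrt{\varepsilon}$ is too small to dominate a generic $|\nabla v_0|\sim 1/\varepsilon$, and the extra hypothesis $\nabla_{x'}\varphi^{k_0}(P)=0$ is exactly what lowers $|\nabla v_0|$ on the centerline from $O(1/\varepsilon)$ to $O(\rho_2(\varepsilon)/\varepsilon)$ via a first-order Taylor expansion of $\varphi$ at $P$. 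Making each of these absorption estimates quantitative — in particular carrying the asymptotic expansions of $v_0$ and the $v_\alpha$ to sufficient order in the narrow gap — is where the detailed work lies.
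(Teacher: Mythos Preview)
Your decomposition omits the crucial subtraction of $\varphi(P)$, and this breaks several of your claims. With $v_0=\varphi$ on $\partial D$ (rather than $\varphi-\varphi(P)$), on the centerline $x'=0$ one has $\partial_{x_d}v_0^{k_0}\sim -\varphi^{k_0}(P)/\varepsilon$, which is $O(1/\varepsilon)$ and cannot be absorbed. Correspondingly, your $b_\alpha$ does \emph{not} converge to $b_\alpha^*$: the mismatch between $v_0$ and $u_0^*(\cdot-P_1)+\varphi(P)$ on $\partial D_1$ is the constant $\varphi(P)$, and the harmonic extension with value $\varphi(P)$ on $\partial D_1$ and $0$ on $\partial D$ is $\sum_k\varphi^k(P)v_k$, whose flux against $\psi_\alpha$ equals $-\sum_k\varphi^k(P)a_{\alpha k}\sim -\varphi^\alpha(P)/\rho_d(\varepsilon)\to\infty$ for $\alpha\le d$. (Your ``zero flux'' remark applies to the globally constant function, not to this one.) Hence $C^{k_0}\to\varphi^{k_0}(P)$ rather than $C^{k_0}\sim\rho_d(\varepsilon)$; the leading $1/\varepsilon$ parts of $C^{k_0}\nabla v_{k_0}$ and $\nabla v_0$ cancel exactly, and your main-term/error separation collapses.

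The paper repairs this by working throughout with $C^\alpha-\varphi^\alpha(P)$: one replaces $u$ by $u-\varphi(P)$ and defines $u_0$ with boundary data $\varphi-\varphi(P)$. Then $|\nabla u_0|\le C$ on $\overline{P_1P}$ automatically (the bound $|\nabla\varphi(P)|\,|x'|/(\varepsilon+|x'|^2)$ vanishes at $x'=0$), and the relevant coefficients are $C^\alpha-\varphi^\alpha(P)$, of size $\rho_d(\varepsilon)$. This also relocates the hypotheses. For $d=2$, the condition $\nabla_{x'}\varphi^{k_0}(P)=0$ is used \emph{not} to control $\nabla u_0$ on the segment (already $O(1)$ there) but to prove $b_{k_0}\to b_{k_0}^*$: without it a term like $\int_{|x'|<r}|\nabla_{x'}\varphi^{k_0}(P)|\,|x'|^{-1}\,dx'$ diverges. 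For $d\ge 4$, since $\rho_d\equiv 1$ all cofactors of $A$ are of the same order, so the hypotheses $b_\alpha^*=0$ for $\alpha\neq k_0$ are what allow the lower bound $|C^{k_0}-\varphi^{k_0}(P)|\ge 1/C$ from Cramer's rule; they are not needed to kill $C^\alpha\nabla v_\alpha$ for $\alpha\neq k_0$, because the finer estimate $|\partial_{x_d}u_\alpha^{k_0}|\le C/\sqrt{\varepsilon}$ (from $\tilde u_\alpha^{k_0}\equiv 0$ when $\alpha\neq k_0$) already makes those terms lower order.
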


\begin{remark}
In Theorem \ref{thm1.2} we do not try to find the most general assumptions to guarantee blow-up occur, but instead give simple conditions (i)-(iii), which show, however, the essential role of the boundary data in this problem. Since $u_{0}$ is uniquely determined by \eqref{u00*} with given data $\varphi(x)-\varphi(\,P)$, Theorem \ref{thm1.2} shows that whether $|\nabla{u}|$ blows up or not totally depends only on the boundary data $\varphi(x)-\varphi(\,P)$. Furthermore, if the blow-up occurs, then from Theorem \ref{thm1.1} and \ref{thm1.2}, we know that it may occur only on the segment $\overline{P_{1}P}$.
\end{remark}

\begin{remark}
Theorem \ref{thm1.1} and \ref{thm1.2} give not only the upper bound but also a lower bound of the blow-up rate of the strain tensor in all dimensions, which shows the optimality of our estimates. Especially for the lower bound, new difficulties need to be overcome and a number of refined estimates are used in our proof. More important, a blow-up factor, totally depending on the given boundary data, is captured. 
\end{remark}

\begin{remark}
The strict convexity assumption on $\partial{D}$ and $\partial{D}_{1}$ in Theorem \ref{thm1.1} and \ref{thm1.2} can be extended to a weaker relative strict convexity assumption, see \eqref{h}--\eqref{h1} below.
\end{remark}

The organization of this paper is as follows. In Section \ref{sec_2} we first decompose the solution $u$ of \eqref{main1} as a linear combination of $u_{\alpha}$, $\alpha=1,2,\cdots,\frac{d(d+1)}{2}$, defined by \eqref{v-123} and \eqref{v-0} below, and then deduce the proof of Theorem \ref{thm1.1} to two aspects: the estimates of $|\nabla{u}_{\alpha}|$ and those of the coefficients $C^{\alpha}$ and $C^{\alpha}-\varphi^{\alpha}(0)$. In Section \ref{sec_3} we establish an upper bound of the gradient of solutions to a boundary problem of Lam\'e system on $\Omega$ with general Dirichlet boundary data in Theorem \ref{thm2.1}, of independent interest, and then obtain the estimates of $|\nabla{u}_{\alpha}|$ as a consequence of Theorem \ref{thm2.1}.  In Section \ref{sec_4} we present the estimates of the coefficients $C^{\alpha}$  and $C^{\alpha}-\varphi^{\alpha}(0)$. Theorem \ref{thm1.2} on the lower bound of $\nabla{u}$ on the segment $\overline{P_{1}P}$ is proved by studying the functional $b_{\alpha}^{*}$ of boundary data $\varphi$ in Section \ref{sec_5}. In the rest of the introduction we review some earlier results on interior gradient estimates for high contrast composites.

As mentioned before, Babu\u{s}ka, Andersson, Smith and Levin \cite{BASL} computationally analyzed the damage and fracture in composite materials and observed numerically that the size of the strain tensor remains bounded when the distance $\epsilon$, between two inclusions, tends to zero.
This was  proved by Li and Nirenberg in \cite{LN}.  Indeed such $\varepsilon$-independent gradient estimates was established there for solutions of divergence form second order elliptic systems, including linear systems of elasticity, with piecewise H\"{o}lder continuous coefficients in all dimensions. See Bonnetier and Vogelius \cite{BV} and Li and Vogelius  \cite{LV}  for corresponding results on divergence form elliptic equations.

The estimates in \cite{LN} and \cite{LV} depend on the ellipticity of the coefficients. If ellipticity constants are allowed to deteriorate, the situation is very different. Consider the simplied scalar model, also called as conductivity problem,
\begin{equation*}%\label{equk}
\begin{cases}
\nabla\cdot\Big(a_{k}(x)\nabla{u}_{k}\Big)=0,&\mbox{in}~\Omega,\\
u_{k}=\varphi,&\mbox{on}~\partial\Omega,
\end{cases}
\end{equation*}
where $\Omega$ is a bounded open set of $\mathbb{R}^{d}$, $d\geq2$, containing two $\epsilon$-apart convex inclusions $D_{1}$ and $D_{2}$, $\varphi\in{C}^{2}(\partial\Omega)$ is given, and
$$a_{k}(x)=
\begin{cases}
k\in(0,\infty),&\mbox{in}~D_{1}\cup{D}_{2},\\
1,&\mbox{in}~\Omega\setminus\overline{D_{1}\cup{D}_{2}}.
\end{cases}
$$
When $k=\infty$,
the $L^\infty$-norm of $|\nabla u_\infty|$ for the solutions $u_{\infty}$ of  the following perfect conductivity problem
 \begin{align}\label{main_scalar}
\begin{cases}
\Delta u=0,\quad&\hbox{in}\ \Omega\setminus\overline{D_{1}\cup{D}_{2}},\\
u|_{+}=u|_{-},&\hbox{on}\ \partial{D}_{1}\cup\partial{D}_{2},\\
\nabla u=0,&\hbox{in}~~D_{1}\cup D_{2},\\
\int_{\partial{D}_{i}}\frac{\partial u}{\partial \vec{n}}\Big|_{+}=0,&i=1,2,\\
u=\varphi,&\hbox{on}\ \partial\Omega
\end{cases}
\end{align}
generally becomes unbounded as $\epsilon$ tends to $0$. There have been much more important progress on the interior gradient estimate of the solution of \eqref{main_scalar}, in contrast to the elasticity vector case. The blow up rate of $|\nabla u_\infty|$ is respectively
 $\epsilon^{-1/2}$ in dimension $d=2$,
$(\epsilon|\ln\epsilon|)^{-1}$ in
dimension $d=3$,  and  $\epsilon^{-1}$  in dimension $d\ge 4$. See
Bao, Li and Yin \cite{BLY1}, as well as Budiansky and Carrier \cite{BC},
  Markenscoff \cite{M}, Ammari, Kang and Lim \cite{AKL},
 Ammari, Kang, Lee, Lee and Lim \cite{AKLLL}, Yun  \cite{Y1,Y2} in $\mathbb{R}^{2}$, and Lim and Yun \cite{LY} in $\mathbb{R}^{3}$.
Further, more detailed, characterizations of the singular behavior of $\nabla{u}_{\infty}$ have been obtained by Ammari, Ciraolo, Kang, Lee and Yun \cite{ACKLY}, Ammari, Kang, Lee, Lim and Zribi \cite{AKLLiZ}, Bonnetier and Triki \cite{BT, BT2}, Gorb and Novikov \cite{GN} and
 Kang, Lim and Yun \cite{KLY, KLY2}. For more related works, see \cite{ABTV, ADKL, AGKL, AKKL, AKLLeZ, BLY2, BT, dong, DX, dongzhang, K1, K2, LLBY, LY} and the references therein.

\vspace{.5cm}

\section{Outline of the Proof of Theorem \ref{thm1.1}(Upper bound)}\label{sec_2}

We now describe our methods of proof. By a translation and rotation of the coordinates if necessary, we may assume without loss of generality that
$$P_1=\left(0, {\varepsilon}\right)\in\partial{D}_{1},\quad P=\left(0, 0\right)\in\partial{D}.$$
In order to prove Theorem 1.1, it suffices to consider the following problem, by replacing $u$ by $u-\varphi(0)$,
\begin{align}\label{main}
\begin{cases}
\mathcal{L}_{\lambda, \mu}u=0,\quad&\hbox{in}\ \Omega,\\
u|_{+}=u|_{-},&\hbox{on}\ \partial{D}_{1},\\
e(u)=0,&\hbox{in}~~D_{1},\\
\int_{\partial{D}_{1}}\frac{\partial u}{\partial \nu_0}\Big|_{+}\cdot\psi_\alpha=0,&\alpha=1, 2, \cdots,\frac{d(d+1)}{2},\\
u=\varphi(x)-\varphi(0),&\hbox{on}\ \partial{D}.
\end{cases}
\end{align}
By the third line of (\ref{main}) and the definition of $\Psi$, $u$ is a linear combination of $\{\psi_\alpha\}$ in $D_{1}$. Since it is clear that $\mathcal{L}_{\lambda,\mu}\xi=0$ in $\Omega$ and $\xi=0$ on $\partial\Omega$ imply that $\xi=0$ in $\Omega$, we decompose the solution of (\ref{main}), in the spire of \cite{BLY1},  as follows:
$$u=\sum_{\alpha=1}^{\frac{d(d+1)}{2}}C^{\alpha}\psi_{\alpha}-\varphi(0)=\sum_{\alpha=1}^{d}
(C^{\alpha}-\varphi^{\alpha}(0))\psi_{\alpha}+\sum_{\alpha=d+1}^{\frac{d(d+1)}{2}}C^{\alpha}\psi_{\alpha},\qquad\hbox{in}\ \overline{D}_{1},$$
for some constants $C^{\alpha}$, $\alpha=1,2,\cdots,\frac{d(d+1)}{2}$, (to be determined by the forth line in \eqref{main}) and
\begin{equation}\label{decompose}
u=\sum_{\alpha=1}^{d}
(C^{\alpha}-\varphi^{\alpha}(0))u_{\alpha}+\sum_{\alpha=d+1}^{\frac{d(d+1)}{2}}C^{\alpha}u_{\alpha}+u_0,\qquad\hbox{in}\ \Omega,
\end{equation}
where $u_\alpha\in C^1(\overline{\Omega}; \mathbb{R}^{d})\cap  C^2(\Omega; \mathbb{R}^{d})$, $\alpha=1, 2, \cdots,\frac{d(d+1)}{2}$, respectively, satisfy
\begin{align}\label{v-123}
\begin{cases}
\mathcal{L}_{\lambda, \mu}u_\alpha=0,\qquad&\hbox{in}\ \Omega,\\
u_\alpha=\psi_\alpha,&\hbox{on}\ \partial{D}_{1},\\
u_\alpha=0,&\hbox{on}~~\partial{D};
\end{cases}
\end{align}
 and $u_0\in C^1(\overline{\Omega}; \mathbb{R}^{d})\cap  C^2(\Omega; \mathbb{R}^{d})$  satisfies
\begin{align}\label{v-0}
\begin{cases}
\mathcal{L}_{\lambda, \mu}u_0=0,\quad&\hbox{in}\ \Omega,\\
u_0=0,&\hbox{on}\ \partial{D}_{1},\\
u_0=\varphi(x)-\varphi(0),&\hbox{on}~~\partial{D}.
\end{cases}
\end{align}
By the decomposition (\ref{decompose}), we write
\begin{align}\label{decomposition_nablau}
\nabla{u}=\sum_{\alpha=1}^{d}
(C^{\alpha}-\varphi^{\alpha}(0))\nabla{u}_{\alpha}+\sum_{\alpha=d+1}^{\frac{d(d+1)}{2}}C^{\alpha}\nabla{u}_{\alpha}+\nabla{u}_0, \quad\hbox{in}\ \Omega.
\end{align}

To estimate $|\nabla u|$, two ingredients are in order: (i) estimates of $|\nabla u_\alpha|$,  $\alpha=0,1,\cdots,\frac{d(d+1)}{2}$; (ii) estimates of $C^{\alpha}-\varphi^{\alpha}(0)$, $\alpha=1,\cdots,d$ and $C^{\alpha}$, $\alpha=1,\cdots,\frac{d(d+1)}{2}$.
Since the singular behavior of $\nabla u$ may occur only in the narrow region between  $D_{1}$ and $\partial{D}$, we are particularly interested in such narrow region. See Figure \ref{fig:1.2}.

\begin{figure}[t]
\includegraphics[width=3.0in]{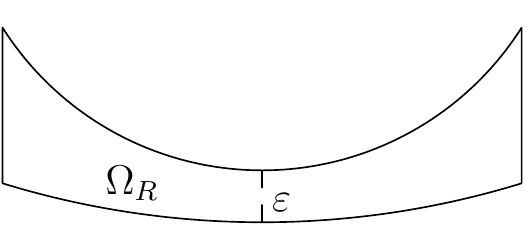}
\caption{\small The narrow region between $\partial{D}_{1}$ and $\partial{D}$.}
\label{fig:1.2}
\end{figure}

Fix a small constant $0<R<1$, independent of $\varepsilon$, such that the portions of $\partial{D}_{1}$ near $P_1$ and $\partial{D}$ near $P$ can be represented, respectively, by
$$x_{d}={\varepsilon}+h_1(x'),\quad\hbox{and}\quad x_{d}=h(x'),\quad\hbox{for}\ |x'|<2R.$$
Moreover, in view of the assumptions of $\partial{D}_{1}$  and $\partial{D}$, $h_{1}$ and $h$ satisfy
\begin{align}\label{h}
\varepsilon+h_1(x')>h(x'),\quad \hbox{for}\ |x'|<2R,\end{align}
\begin{align}\label{h2}h_1(0')=h(0')=0,\quad \nabla_{x'} h_1(0')=\nabla_{x'}h(0')=0,\end{align}
 \begin{align}\label{h1}
  \nabla^2_{x'}h_1(0'), \nabla^2_{x'}h(0')\geq \kappa_0I,\quad \nabla^2_{x'}(h_1-h)(0')\geq\kappa_1I,
 \end{align}
 and \begin{align}\label{h3}
\|h_1\|_{C^{2, \gamma}(\overline{B_{2R}(0')})}+\|h\|_{C^{2, \gamma}(\overline{B_{2R}(0')})}\leq \kappa_{2},
\end{align}
where $\kappa_0, \kappa_{1}$ and $\kappa_{2}$ are some positive constants. Throughout the paper, unless otherwise stated, we  use  $C$ to denote some positive constant, whose values may vary from line to line, which depend only on $\delta_0, \kappa_{0},\kappa_{1}$ and $\kappa_{2}$, but not on $\varepsilon$. Also, we call a constant having such dependence a {\it universal constant}.

For $0<r<2R$, we denote
 $$\Omega_r:=\left\{x=(x',x_{d})\in \mathbb{R}^{d}~\big|~ h(x')<x_{d}< \varepsilon+h_1(x'), ~|x'|<r \right\}.$$
The top and bottom boundaries of $\Omega_r$  are
 $$\Gamma_r^+=\{x\in \mathbb{R}^{d}\, |\, x_{d}=\varepsilon+h_1(x'), |x'|< r\},~ \Gamma_r^-=\{x\in \mathbb{R}^{d} \,|\, x_{d}=h(x'), |x'|<r\},$$respectively.

To estimate $|\nabla{u}_{\alpha}|$, we consider the following general boundary value problems:
\begin{align}\label{eq1.1}
\begin{cases}
\mathcal{L}_{\lambda,\mu}v:=\nabla\cdot(\mathbb{C}^0e(v))=0,\quad&
\hbox{in}\  \Omega,  \\
v=\psi(x),\quad &\hbox{on}\ \partial{D}_{1},\\
v=0, \quad&\hbox{on} \ \partial{D},
\end{cases}
\end{align}
where $ \psi(x)=(\psi^1(x), \psi^2(x),\cdots,\psi^{d}(x))^{T}\in C^2(\partial{D}_{1}; \mathbb{R}^{d})$ is given vector-valued functions. Locally piontwise gradient estimates for  problem (\ref{eq1.1}) is as follows:

\begin{theorem}\label{thm2.1}
Assume that hypotheses (\ref{h}) - (\ref{h3}) are satisfied, and let $v\in H^{1}(\Omega; \mathbb{R}^{d})$ be a weak solution of problem (\ref{eq1.1}). Then for $0<\varepsilon<1/2$, 
\begin{align}\label{mainestimate}
|\nabla v(x',x_{d})|
\leq&\, \frac{C}{\varepsilon+|x'|^2}\Big|\psi(x',\varepsilon+h_{1}(x'))\Big|+C\|\psi\|_{C^{2}(\partial{D}_{1}; \mathbb{R}^{d})},\quad\forall\,x\in \Omega_{R},
\end{align}
and
$$|\nabla v(x)|
\leq\,C\|\psi\|_{C^{2}(\partial{D}_{1}; \mathbb{R}^{d})}\quad\forall\,x\in \Omega\setminus\Omega_{R}.
$$
\end{theorem}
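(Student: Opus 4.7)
The plan is to compare $v$ with an explicit interpolating function that realizes the boundary data on $\partial D_{1}$, and then control the difference by energy estimates together with a rescaling to a unit-size reference domain. Define the width
\[
\delta(x'):=\varepsilon+h_{1}(x')-h(x'),\qquad \delta(x')\sim \varepsilon+|x'|^{2}\text{ for }|x'|<R,
\]
by the strict relative convexity \eqref{h}--\eqref{h1}. Introduce
\[
\bar v(x):=\psi\bigl(x',\varepsilon+h_{1}(x')\bigr)\,\frac{x_{d}-h(x')}{\delta(x')},
\]
which satisfies $\bar v=\psi$ on $\Gamma_{R}^{+}$, $\bar v=0$ on $\Gamma_{R}^{-}$, and which can be extended smoothly and bounded by $C\|\psi\|_{C^{2}}$ away from the bridge $\Omega_{R}$. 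A direct differentiation, using \eqref{h2}--\eqref{h3}, gives the pointwise bound
\[
|\nabla\bar v(x)|\le \frac{C\,|\psi(x',\varepsilon+h_{1}(x'))|}{\delta(x')}+C\|\psi\|_{C^{1}(\partial D_{1};\mathbb R^{d})},
\]
together with $|\nabla^{2}\bar v(x)|\le C|\psi|/\delta(x')^{2}+C\|\psi\|_{C^{2}}/\delta(x')$. Since the interpolation factor is linear in $x_{d}$, the $x_{d}x_{d}$ contribution to $\mathcal L_{\lambda,\mu}\bar v$ vanishes, and one obtains $|\mathcal L_{\lambda,\mu}\bar v(x)|\le C|\psi|/\delta(x')+C\|\psi\|_{C^{2}}$ after collecting the tangential second derivatives and the mixed terms.

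Next, let $w:=v-\bar v\in H^{1}_{0}(\Omega;\mathbb R^{d})$; then $\mathcal L_{\lambda,\mu}w=-\mathcal L_{\lambda,\mu}\bar v$ in $\Omega$. Testing against $w$ itself, using Korn's and Poincar\'e's inequalities on $\Omega$ (which apply because $w$ vanishes on $\partial D$), and the ellipticity \eqref{delta}, yields a global energy bound
\[
\int_{\Omega}|\nabla w|^{2}\,dx\le C\|\psi\|^{2}_{C^{2}(\partial D_{1};\mathbb R^{d})}.
\]
For the sharper local estimate in $\Omega_{R}$ I would run an iteration \`a la Bao--Li--Yin: testing against $w\eta^{2}$ for a cut-off $\eta$ supported in $\Omega_{t}\setminus\Omega_{s}$ and using Korn on the thin strip gives a Caccioppoli-type inequality
\[
\int_{\Omega_{s}}|\nabla w|^{2}\le \frac{C}{(t-s)^{2}}\int_{\Omega_{t}\setminus\Omega_{s}}|w|^{2}+ C\!\!\int_{\Omega_{t}}|\mathcal L_{\lambda,\mu}\bar v|\,|w|,
\]
from which one obtains, after the standard iteration on shrinking radii, $\int_{\Omega_{\delta(z')}(z')}|\nabla w|^{2}\le C\delta(z')^{d-2}\|\psi\|^{2}_{C^{2}}+C|\psi(z',\varepsilon+h_{1}(z'))|^{2}\delta(z')^{d-4}$ for points $z'$ with $|z'|<R$.

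Finally I would rescale. For fixed $z=(z',z_{d})\in\Omega_{R}$ set
\[
y=\frac{x-(z',h(z'))}{\delta(z')},\qquad W(y):=\frac{1}{\delta(z')}\,w(x),
\]
which maps a neighborhood of $z$ in $\Omega$ to a domain $\widehat\Omega$ of size $\sim 1$ whose two free boundaries are uniformly $C^{2,\gamma}$ and stay a definite distance apart. In the $y$-variables $W$ satisfies a Lam\'e system with the same constants $(\lambda,\mu)$, bounded right-hand side (in view of the scaling of $\mathcal L\bar v$), and zero Dirichlet data on the bottom; standard boundary $W^{2,p}$/$C^{1,\alpha}$ estimates for Lam\'e systems give $|\nabla W|(0)\le C$ in the rescaled problem, which translates back to
\[
|\nabla w(z)|\le \frac{C\,|\psi(z',\varepsilon+h_{1}(z'))|}{\varepsilon+|z'|^{2}}+C\|\psi\|_{C^{2}(\partial D_{1};\mathbb R^{d})}.
\]
Combining with the bound on $|\nabla\bar v|$ yields \eqref{mainestimate}; the bound on $\Omega\setminus\Omega_{R}$ follows from standard interior and boundary Schauder/$L^{p}$ estimates since that region is $\varepsilon$-independent.

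The main obstacle I expect is the iteration/rescaling step: the narrow strip $\Omega_{R}$ has highly anisotropic geometry with height $\sim\varepsilon+|x'|^{2}$, and one must show that the Caccioppoli iteration produces the correct power of $\delta(z')$ so that the rescaled system falls into a class for which uniform $C^{1}$ estimates hold. Without a maximum principle for the Lam\'e system the $L^{\infty}$ part of $v$ itself has to be obtained indirectly, via $\bar v$ and the energy bound on $w$, and a careful choice of cut-off is needed to avoid losing factors of $\delta(z')$ when converting the $L^{2}$ gradient bound into a pointwise one.
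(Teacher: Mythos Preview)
Your strategy---build an explicit interpolant carrying the boundary data, estimate the remainder $w=v-\bar v$ by a global energy bound, refine it locally by a Caccioppoli iteration, and finally rescale to a unit domain and apply $W^{2,p}$ estimates---is exactly the path the paper follows. Two quantitative claims, however, are off. First, your bound $|\mathcal L_{\lambda,\mu}\bar v|\le C|\psi|/\delta+C\|\psi\|_{C^{2}}$ overlooks the mixed derivatives $\partial_{x_{k}x_{d}}$ of the linear interpolation factor, which contribute a term of size $C|x'|\,|\psi|/\delta^{2}$; for $|x'|\sim\sqrt{\delta}$ this is of order $|\psi|/\delta^{3/2}$, not $|\psi|/\delta$. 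After rescaling this enters as $\delta\|\mathcal L\bar v\|_{L^{\infty}}\le C|\psi|/\sqrt{\delta}+C\|\psi\|_{C^{2}}$ and ultimately yields $|\nabla w|\le C|\psi|/\sqrt{\delta}+C\|\psi\|_{C^{2}}$, which is still sufficient for \eqref{mainestimate} but not what you wrote.

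The more serious gap is in the iteration. The powers $\delta^{d-2}$ and $\delta^{d-4}$ you quote for $\int_{\widehat\Omega_{\delta}(z)}|\nabla w|^{2}$ are far too weak---for $d=2$ the latter is $\delta^{-2}$---and plugged into your own rescaling would give $|\nabla w|\lesssim |\psi|/\delta^{2}+\|\psi\|_{C^{2}}/\delta$, which is useless. The ingredient you are missing is the \emph{thin Poincar\'e inequality} in the vertical variable: since $w=0$ on $\Gamma_{R}^{-}$ and the strip $\widehat\Omega_{s}(z)$ has height comparable to $\delta$, one has
\[
\int_{\widehat\Omega_{s}(z)}|w|^{2}\,dx\le C\bigl(\varepsilon+(|z'|+s)^{2}\bigr)^{2}\int_{\widehat\Omega_{s}(z)}|\nabla w|^{2}\,dx.
\]
This makes the Caccioppoli recursion contract with ratio $\bigl(c\delta/(s-t)\bigr)^{2}$; iterating with step $t_{i+1}-t_{i}\sim\delta$ over roughly $1/\sqrt{\delta}$ steps, and absorbing the starting global energy by the geometric factor $(1/4)^{k}$, yields the correct local bound
\[
\int_{\widehat\Omega_{\delta}(z)}|\nabla w|^{2}\,dx\le C\delta^{d+1}\bigl(|\psi(z',\varepsilon+h_{1}(z'))|^{2}+\delta\|\psi\|_{C^{2}}^{2}\bigr).
\]
Only with these powers does the rescaled $W^{2,p}$ estimate close to give the desired pointwise bound on $\nabla w$; without the vertical Poincar\'e step your iteration does not close.
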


\begin{remark}
Theorem \ref{thm2.1} is of independent interest. We also can deal with more general case when $v=\phi(x)$ on $\partial{D}$, instead of the condition $v=0$ there. The proof of Theorem \ref{thm2.1} is given in Section \ref{sec_3}.
\end{remark}

Without loss of generality, we only need to prove Theorem \ref{thm1.1} for $\|\varphi\|_{C^{2}(\partial{D};\mathbb{R}^{d})}=1$, and for general case by considering $u/\|\varphi\|_{C^{2}(\partial{D};\mathbb{R}^{d})}$ if $\|\varphi\|_{C^{2}(\partial{D};\mathbb{R}^{d})}>0$. If $\varphi|_{\partial{D}}=0$, then $u\equiv0$. First, the estimates of $|\nabla{u}_{\alpha}|$ are some immediate consequences of Theorem \ref{thm2.1}, only taking $\psi=\psi_\alpha, \alpha=1,\cdots,\frac{d(d+1)}{2}$, respectively, or $\psi=\varphi(x)-\varphi(0)$ with minor modifications.

\begin{corollary}\label{corol2.2}
Under the hypotheses of Theorem 1.1 and with the normalization $\|\varphi\|_{C^{2}(\partial{D};\mathbb{R}^{d})}=1$. Then for $0<\varepsilon<1/2$,
\begin{align}
|\nabla u_\alpha(x)|\leq&\frac{C}{\varepsilon+|x'|^2},\quad \alpha=1,2,\cdots,d,\quad\forall~x\in \Omega_{R};\label{v12}\\
|\nabla u_{\alpha}(x)|\leq&\frac{C(\varepsilon+|x'|)}{\varepsilon+|x'|^2},\quad\alpha=d+1,\cdots,\frac{d(d+1)}{2},\quad\forall~x\in \Omega_{R};\label{v3}\\
|\nabla u_0(x)|\leq&\frac{C|\nabla\varphi(0)||x'|}{\varepsilon+|x'|^2}+C,\quad\forall~x\in \Omega_{R};\label{v0}
\end{align}
and
$$|\nabla u_{\alpha}(x)|
\leq\,C,\quad\alpha=0,1,2,\cdots,\frac{d(d+1)}{2},\quad\forall\,x\in \Omega\setminus\Omega_{R}.
$$
\end{corollary}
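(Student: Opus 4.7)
The plan is to apply Theorem \ref{thm2.1} (or its variant discussed in the remark following it) with three different choices of boundary data, and in each case to read off the size of the data on the top boundary $\Gamma_R^+$ (for $u_\alpha$) or the bottom boundary $\Gamma_R^-$ (for $u_0$) using the geometric information $h(0')=h_1(0')=0$ and $|h(x')|,|h_1(x')|\leq C|x'|^2$ coming from \eqref{h2}--\eqref{h3}.

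For $\alpha=1,\dots,d$, each $u_\alpha$ solves \eqref{v-123} with $\psi=\psi_\alpha=e_\alpha$. Since $|\psi_\alpha|\equiv 1$ on $\partial{D}_{1}$ and $\|\psi_\alpha\|_{C^2(\partial{D}_{1};\mathbb{R}^{d})}\leq C$, Theorem \ref{thm2.1} immediately yields
\[
|\nabla u_\alpha(x)|\leq \frac{C}{\varepsilon+|x'|^2}+C\leq \frac{C}{\varepsilon+|x'|^2},\qquad x\in\Omega_R,
\]
since $\varepsilon+|x'|^2\leq C$ in $\Omega_R$. This gives \eqref{v12}.

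For $\alpha=d+1,\dots,\frac{d(d+1)}{2}$, $\psi_\alpha=x_je_k-x_ke_j$ for some $1\leq j<k\leq d$. On $\Gamma_R^+$ we have $x_d=\varepsilon+h_1(x')$ with $|h_1(x')|\leq C|x'|^2\leq CR|x'|$. If $k<d$ then $|\psi_\alpha(x',\varepsilon+h_1(x'))|\leq C|x'|$; if $k=d$ then $|\psi_\alpha(x',\varepsilon+h_1(x'))|\leq |x'|+|\varepsilon+h_1(x')|\leq C(\varepsilon+|x'|)$. In either case the pointwise bound \eqref{mainestimate} produces
\[
|\nabla u_\alpha(x)|\leq \frac{C(\varepsilon+|x'|)}{\varepsilon+|x'|^2}+C\leq \frac{C(\varepsilon+|x'|)}{\varepsilon+|x'|^2},\qquad x\in\Omega_R,
\]
which is \eqref{v3}.

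For $u_0$, the roles of $\partial D_1$ and $\partial D$ are switched compared to \eqref{eq1.1}: $u_0=0$ on $\partial{D}_1$ and $u_0=\varphi(x)-\varphi(0)$ on $\partial{D}$. Here I invoke the variant of Theorem \ref{thm2.1} mentioned in the remark (where the general Dirichlet datum $\phi(x)$ sits on $\partial{D}$ rather than $\partial{D}_1$), which, by the symmetric argument, gives
\[
|\nabla u_0(x)|\leq \frac{C}{\varepsilon+|x'|^2}\bigl|\varphi(x',h(x'))-\varphi(0)\bigr|+C\|\varphi\|_{C^2(\partial{D};\mathbb{R}^{d})},\qquad x\in\Omega_R.
\]
Using a Taylor expansion of $\varphi$ around $0=P\in\partial{D}$ together with $|h(x')|\leq C|x'|^2$, I estimate
\[
\bigl|\varphi(x',h(x'))-\varphi(0)\bigr|\leq |\nabla\varphi(0)|\,|x'|+C|x'|^2,
\]
and inserting this into the previous display gives exactly \eqref{v0} after absorbing the $C|x'|^2/(\varepsilon+|x'|^2)$ term into the additive constant. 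The outside-$\Omega_R$ bound follows at once from the second statement of Theorem \ref{thm2.1}. The only slightly delicate point is the explicit appearance of the factor $|\nabla\varphi(0)|$ in \eqref{v0}, which is what the Taylor decomposition is designed to isolate; this factor will be decisive later when comparing upper and lower bounds.
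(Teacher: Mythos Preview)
Your proposal is correct and follows essentially the same approach as the paper: apply Theorem \ref{thm2.1} with $\psi=\psi_\alpha$ for $\alpha=1,\dots,\frac{d(d+1)}{2}$, and for $u_0$ use the variant with the roles of $\partial D_1$ and $\partial D$ interchanged together with a Taylor expansion of $\varphi$ at the origin (cf.\ the paper's Corollary \ref{corol3.5} and \eqref{taylor}). The paper treats the $u_0$ case componentwise and records the slightly sharper factor $|\nabla_{x'}\varphi^l(0)|$ in place of $|\nabla\varphi(0)|$, but for the statement of Corollary \ref{corol2.2} your estimate suffices.
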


On the other hand, we need the following estimates on $C^{\alpha}$ and $|C^{\alpha}-\varphi^{\alpha}(0)|$. The proof is given in Section \ref{sec_4}.
\begin{prop}\label{prop1}
Under the hypotheses of Theorem 1.1 and with the normalization $\|\varphi\|_{C^{2}(\partial{D};\mathbb{R}^{d})}=1$. Then
\begin{align}&|C^\alpha|\leq C,\quad \alpha=1, 2, \cdots,\frac{d(d+1)}{2},\label{C3}
\end{align}
and
\begin{align}\label{C4}
 |C^{\alpha}-\varphi^{\alpha}(0)|\leq  C\rho_d(\varepsilon), \quad \alpha=1,2,\cdots,d.
\end{align}
\end{prop}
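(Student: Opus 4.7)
The plan is to set up and solve a symmetric linear system for the unknowns $X^\alpha := C^\alpha - \varphi^\alpha(0)$ ($\alpha \le d$) and $X^\alpha := C^\alpha$ ($\alpha > d$), obtained by substituting the decomposition (\ref{decompose}) into the constraint line of (\ref{main}). Applying the integration-by-parts identity (\ref{eu}) with $O=\Omega$, using $\mathcal{L}_{\lambda,\mu}u_\alpha=0$ and the boundary values of $u_\alpha,u_\beta$ (noting that the outer normal of $\Omega$ on $\partial D_1$ is the inward normal of $D_1$), I would rewrite
\[
\int_{\partial D_1}\frac{\partial u_\alpha}{\partial \nu_0}\Big|_{+}\cdot\psi_\beta \;=\; -\int_\Omega\big(\mathbb{C}^0 e(u_\alpha), e(u_\beta)\big)\,dx \;=:\; -a_{\alpha\beta},
\]
and analogously set $b_\beta := \int_\Omega\big(\mathbb{C}^0 e(u_0), e(u_\beta)\big)\,dx$. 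The constraint $\int_{\partial D_1}\frac{\partial u}{\partial\nu_0}|_+\cdot\psi_\beta=0$ then reduces to
\[
\sum_{\alpha=1}^{d(d+1)/2} a_{\alpha\beta}\, X^\alpha \;=\; -b_\beta,\qquad \beta=1,\ldots,\tfrac{d(d+1)}{2}.
\]
The matrix $A=(a_{\alpha\beta})$ is symmetric and, by the unique solvability of (\ref{main}), positive definite.

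Next I would quantify the entries. By Cauchy--Schwarz and Corollary \ref{corol2.2}, splitting the integrals into the narrow region $\Omega_R$ (slice thickness $\sim\varepsilon+|x'|^2$) and its complement and inserting (\ref{v12})--(\ref{v0}), one obtains $|a_{\alpha\beta}|\le C/\rho_d(\varepsilon)$ for $\alpha,\beta\le d$, while entries involving at least one rotation index ($>d$) or the inhomogeneity $u_0$ are bounded by $C$ (up to a harmless extra $|\log\varepsilon|$ in $d=2$). The crucial quantitative ingredient is the matching lower bound
\[
a_{\alpha\alpha}\;\ge\;\frac{c}{\rho_d(\varepsilon)},\qquad \alpha=1,\ldots,d.
\]
I would establish this by extending $u_\alpha$ across $\partial D_1$ by the rigid motion $\psi_\alpha$ (continuous because $u_\alpha=\psi_\alpha$ on $\partial D_1$) to obtain a function in $H^1_0(D)$, applying Korn's first inequality on $D$ to bound $\int_\Omega|e(u_\alpha)|^2$ from below by $c\int_\Omega|\nabla u_\alpha|^2$, and finally observing that on $\Omega_R$ the $\alpha$th component of $u_\alpha$ is forced to vary from $1$ on $\Gamma_R^{+}$ to $0$ on $\Gamma_R^{-}$ across a vertical gap of length $\varepsilon+h_1(x')-h(x')\sim\varepsilon+|x'|^2$; a one-dimensional Poincar\'e inequality along each vertical line then gives
\[
\int_{\Omega_R}|\partial_{x_d}u_\alpha^\alpha|^2\,dx \;\ge\; c\int_{|x'|<R}\frac{dx'}{\varepsilon+|x'|^2} \;\ge\; \frac{c}{\rho_d(\varepsilon)}.
\]

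With these two-sided bounds in hand, I would solve the system by block elimination. Splitting $A$ into the translation block $A_{11}$ ($\alpha,\beta\le d$), rotation block $A_{22}$ ($\alpha,\beta>d$), and coupling $A_{12}$, the estimates above imply that $A_{11}$ has diagonal of size $\sim 1/\rho_d(\varepsilon)$, that $A_{22}$ is bounded and uniformly positive definite, and that $A_{12}$ is of strictly smaller order than the diagonal of $A_{11}$. Eliminating the translation unknowns first produces $|X^\alpha|\le C\rho_d(\varepsilon)$ for $\alpha\le d$; the residual system for the rotation unknowns is then bounded and uniformly invertible and yields $|X^\alpha|\le C$ for $\alpha>d$. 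Combined with $|\varphi^\alpha(0)|\le\|\varphi\|_{C^2(\partial D;\mathbb{R}^d)}=1$, this gives (\ref{C3}) and (\ref{C4}).

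I expect the main obstacles to be twofold. First, the lower bound on $a_{\alpha\alpha}$ is delicate because Korn's first inequality must be invoked on the glued domain $D$ after extension by a rigid motion rather than on $\Omega$ alone (the Korn constant of $\Omega$ could a priori degenerate as $\varepsilon\to 0$). Second, in dimension $d=2$ the naive upper bound for $b_\beta$ with $\beta\le d$ carries a spurious factor $|\log\varepsilon|$ inherited from the $\nabla\varphi(0)\cdot x'$ piece of the boundary data of $u_0$; to remove it I would subtract off the linear part of $\varphi$ at $P$ and exploit the parity of $u_\alpha^\alpha$ in $x'$ so that the offending odd integrand cancels upon integration over $\Omega_R$.
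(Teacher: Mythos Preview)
Your overall plan---set up the symmetric system $\sum_\alpha a_{\alpha\beta}X^\alpha=b_\beta$ from the fourth line of (\ref{main}) and invert by blocks---matches the paper's route to (\ref{C4}). But there is a genuine gap in the matrix analysis that makes your block elimination step fail as written. The problem is the \emph{off-diagonal} entries of the translation block $A_{11}=(a_{\alpha\beta})_{\alpha,\beta\le d}$. Cauchy--Schwarz together with Corollary \ref{corol2.2} only gives $|a_{\alpha\beta}|\le C/\rho_d(\varepsilon)$ for \emph{all} $\alpha,\beta\le d$, i.e.\ the same order as the diagonal; knowing merely that the diagonal of $A_{11}$ is $\sim 1/\rho_d(\varepsilon)$ does not yield $\|A_{11}^{-1}\|\le C\rho_d(\varepsilon)$, since a positive-definite matrix with all entries of comparable size can be arbitrarily ill-conditioned. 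The paper closes this gap in Lemma \ref{lemma4.2} by proving that the off-diagonal $a_{\alpha\beta}$ ($\alpha\ne\beta$, $\alpha,\beta\le d$) are of strictly smaller order: $\le C|\log\varepsilon|$ in $d=2$ and $\le C$ in $d\ge 3$. This cannot be extracted from the volume integral via Corollary \ref{corol2.2}; it requires rewriting $a_{\alpha\beta}$ as a boundary integral over $\partial D_1$, invoking the componentwise splitting $u_\alpha=\tilde u_\alpha+w_\alpha$ of Corollary \ref{corol3.4}, and using that $\tilde u_\alpha^\beta\equiv 0$ for $\beta\ne\alpha$ so that the singular $\partial_{x_d}$-contribution is carried only by the better-behaved remainder $w_\alpha$.

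Three further points. (i) Your claim that $A_{22}$ is \emph{uniformly} positive definite needs proof: positive definiteness for each fixed $\varepsilon$ follows from unique solvability, but the $\varepsilon$-independent lower bound does not; the paper obtains it by a compactness/contradiction argument (Lemma \ref{lemma4.1}). (ii) For (\ref{C3}) the paper does \emph{not} go through the linear system at all: it uses that $u$ minimizes $I_\infty$ on $\mathcal{A}$, hence $\|u\|_{H^1(\Omega)}\le C$, and then a trace bound plus linear independence of $\{\psi_\alpha\}$ on $\partial D_1^*\cap B_R$ forces $|C^\alpha|\le C$ directly. (iii) Your proposed fix for $b_\beta$ in $d=2$ via ``parity of $u_\alpha^\alpha$ in $x'$'' will not work as stated---$u_\alpha$ has no symmetry on a generic $\Omega$. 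The paper (Lemma \ref{lemma4.3}) instead works with the explicit auxiliary $\tilde u_{0l}$ on $\partial D_1$ and kills the dangerous odd term $\int_{|x'|<R}\frac{\nabla_{x'}\varphi(0)\cdot x'}{\varepsilon+(h_1-h)(x')}\,dx'$ by diagonalizing $\nabla^2_{x'}(h_1-h)(0')$; it is the structure of the explicit leading term, not of the PDE solution, that provides the cancellation.
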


We are now in position to prove Theorem \ref{thm1.1}.

\begin{proof}[Proof of Theorem \ref{thm1.1}] 
Since
\begin{gather*}
\nabla u=\sum_{\alpha=d+1}^{\frac{d(d+1)}{2}}C^{\alpha}\nabla\psi_\alpha=\begin{pmatrix}0&C^{d+1}&C^{d+2}&\cdots&C^{2d-1}\\
-C^{d+1}&0&C^{2d}&\cdots &C^{3d-3}\\
-C^{d+2}&-C^{2d}&0& \ddots&\vdots\\
\vdots&\vdots& \ddots&\ddots &C^{\frac{d(d+1)}{2}}\\
-C^{2d-1}&-C^{3d-3}&\cdots&-C^{\frac{d(d+1)}{2}}&0
\end{pmatrix}\quad\hbox{in}~~D_{1},
\end{gather*}
The   estimate  (\ref{gradient2}) immediately follows from (\ref{C3}).

By (\ref{C4}), Corollary \ref{corol2.2} and Proposition \ref{prop1}, we have, for $x\in\Omega_{R}$,
\begin{align*}
|\nabla u(x)|&\leq\sum_{\alpha=1}^{d}
\left|C^{\alpha}-\varphi^{\alpha}(0)\right||\nabla{u}_{\alpha}|+\sum_{\alpha=d+1}^{\frac{d(d+1)}{2}}C^{\alpha}|\nabla{u}_{\alpha}|+|\nabla{u}_0|\\
&\leq\,C\left(
\frac{\rho_d(\varepsilon)}{\varepsilon+|x'|^{2}}
+\frac{|x'|}{\varepsilon+|x'|^{2}}+1\right).
\end{align*}
Thus, \eqref{gradient1} is proved, so \eqref{gradient_in}.
\end{proof}

To complete this section, we recall some properties of the tensor $\mathbb{C}$. For the isotropic elastic material, let
$$\mathbb{C}:=(C_{ijkl})=(\lambda\delta_{ij}\delta_{kl}+\mu(\delta_{ik}\delta_{jl}+\delta_{il}\delta_{jk})),\quad \mu>0,\quad d\lambda+2\mu>0.$$
The components $C_{ijkl}$ satisfy the following symmetric condition:
\begin{align}\label{symm}
C_{ijkl}=C_{klij}=C_{klji},\quad i,j,k,l=1,2,\cdots, d.
\end{align}
We will use the following notations:
\begin{align*}
(\mathbb{C}A)_{ij}=\sum_{k,l=1}^dC_{ijkl}A_{kl},\quad\hbox{and}\quad(A,B)\equiv A:B=\sum_{i,j=1}^dA_{ij}B_{ij},
\end{align*}
for every pair of $d\times d$ matrices $A=(A_{ij})$, $B=(B_{ij})$. Clearly,
$$(\mathbb{C}A, B)=(A, \mathbb{C}B).$$
If $A$ is symmetric, then, by the symmetry condition (\ref{symm}), we have that
$$(\mathbb{C}A, A)=C_{ijkl}A_{kl}A_{ij}=\lambda A_{ii}A_{kk}+2\mu A_{kj}A_{kj}.$$
Thus $\mathbb{C}$ satisfies the following ellipticity condition: For every $d\times d$ real symmetric matrix $\eta=(\eta_{ij})$,
\begin{align}\label{ellip}
\min\{2\mu, d\lambda+2\mu\}|\eta|^2\leq(\mathbb{C}\eta, \eta)\leq\max\{2\mu, d\lambda+2\mu\}|\eta|^2,
\end{align}
where $|\eta|^2=\sum_{ij}\eta_{ij}^2.$ In particular,
\begin{align}\label{2.15}
\min\{2\mu, d\lambda+2\mu\}|A+A^T|^2\leq(\mathbb{C}(A+A^T), (A+A^T)).
\end{align}

\vspace{1cm}

\section{Proof of Theorem \ref{thm2.1} and estimates of $|\nabla u_\alpha|$}\label{sec_3}

In this section, we first prove Theorem \ref{thm2.1}, then give some much finer estimates on $|\nabla u_\alpha|$, which will be useful for the establishment of the low bound estimates in Section \ref{sec_4} and Section \ref{sec_5}.

We decompose the solution of (\ref{eq1.1}) as follows:
\begin{align*}
v=v_1+v_2+\cdots+v_{d},
\end{align*}
where $v_l=(v_l^1, v_l^2,\cdots,v_{l}^{d})^{T}$, $l=1,2,\cdots,d$, with  $v_l^j=0$ for $j\neq l$, and $v_{l}$ satisfy the following boundary value problem, respectively,
\begin{align}\label{eq_v2.1}
\begin{cases}
  \mathcal{L}_{\lambda,\mu}v_{l}:=\nabla\cdot(\mathbb{C}^0e(v_{l}))=0,\quad&
\hbox{in}\  \Omega,  \\
v_{l}=( 0,\cdots,0,\psi^{l}, 0,\cdots,0)^{T},\ &\hbox{on}\ \partial{D}_{1},\\
v_{l}=0,&\hbox{on} \ \partial{D}.
\end{cases}
\end{align}
Then
\begin{equation}\label{equ_nablau}
\nabla{v}=\sum_{l=1}^{d}\nabla{v}_{l}.
\end{equation}

In order to estimate $|\nabla v_l|$ one by one, we first introduce a scalar auxiliary function $\bar{v}\in C^2(\mathbb{R}^n)$ such that $\bar{v}=1$ on $\partial{D}_{1}$, $\bar{v}=0$ on $\partial{D}$ and
$$\bar{v}(x)=\frac{x_{d}-h(x')}{\varepsilon+h_1(x')-h(x')},\quad\hbox{in}\ \Omega_{2R},$$
and
\begin{equation}\label{nabla_vbar_outside}
\|\bar{v}\|_{C^{2}(\Omega\setminus\Omega_{R/2})}\leq\,C.
\end{equation}
By a direct calculation, we obtain that for $k, j=1,\cdots, d-1$, and $x\in\Omega_{2R}$,
\begin{align}
|\partial_{x_{k}}\bar{v}(x)|\leq\frac{C|x'|}{\varepsilon+|x'|^2},\qquad ~~\frac{1}{C(\varepsilon+|x'|^2)}\leq|\partial_{x_{d}}\bar{v}(x)|\leq\frac{C}{\varepsilon+|x'|^2},\label{e2.4}
\end{align}
and
\begin{align}
|\partial_{x_{k}x_{j}}\bar{v}(x)|\leq\frac{C}{\varepsilon+|x'|^2}, \quad|\partial_{x_{k}x_{d}}\bar{v}(x)|\leq\frac{C|x'|}{(\varepsilon+|x'|^2)^2},\quad \partial_{x_{d}x_{d}}\bar{v}(x)=0.\label{ee2.4}
\end{align}

Extend $\psi\in{C}^{2}(\partial{D};\mathbb{R}^{d})$ to $\psi\in{C}^{2}(\overline{\Omega};\mathbb{R}^{d})$ such that $\|\psi^{l}\|_{C^{2}(\overline{\Omega\setminus\Omega_{R}})}\leq\,C\|\psi^{l}\|_{C^2(\partial D_1)}$, for $l=1,2,\cdots,d$. We can find $\rho\in{C}^{2}(\overline{\Omega})$ such that
\begin{equation}\label{cutoff_function_rho}
\begin{array}{l}
\displaystyle
0\leq\rho\leq1,~|\nabla\rho|\leq\,C,~\mbox{on}~\overline{\Omega},\\
\displaystyle\rho=1~\mbox{on}~\overline{\Omega}_{\frac{3}{2}R},~\mbox{and}~\rho=0~\mbox{on}~\overline{\Omega}\setminus\Omega_{2R}.
\end{array}
\end{equation}
Define
\begin{align}\label{equ_tildeu}
\tilde{v}_{l}(x)=( 0,\cdots,0,\left[\rho(x)\psi^{l}(x',\varepsilon+h_{1}(x'))+(1-\rho(x))\psi^{l}(x)\right]\bar{v}(x), 0,\cdots,0)^{T}\quad\mbox{in}~\Omega.
\end{align}
In particular, 
\begin{align}\label{equ_tildeu_in}
\tilde{v}_{l}(x)=( 0,\cdots,0,\psi^{l}(x',\varepsilon+h_{1}(x'))\bar{v}(x), 0,\cdots,0)^{T}\quad\mbox{in}~\Omega_{R},
\end{align}
and in view of \eqref{nabla_vbar_outside},
\begin{equation}\label{nabla_vtilde_outside}
\|\tilde{v}_{l}\|_{C^{2}(\Omega\setminus\Omega_{R/2})}\leq\,C\|\psi^{l}\|_{C^2(\partial D_1)}.
\end{equation}

Due to (\ref{e2.4}), and (\ref{ee2.4}), for $l=1,2,\cdots,d$, and $k,j=1,2,\cdots,d-1$, for $x\in\Omega_{R}$,
\begin{align}\label{eq1.7}
|\partial_{x_{k}}\tilde{v}_{l}(x)|\leq\frac{C|x'||\psi^{l}(x',\varepsilon+h_{1}(x'))|}{\varepsilon+|x'|^2}+C\|\nabla\psi^{l}\|_{L^{\infty}},
\end{align}
\begin{align}\label{eq1.7a}
\frac{|\psi^{l}(x',\varepsilon+h_{1}(x'))|}{C(\varepsilon+|x'|^2)}\leq|\partial_{x_{d}}\tilde{v}_{l}(x)|\leq\frac{C|\psi^{l}(x',\varepsilon+h_{1}(x'))|}{\varepsilon+|x'|^2};
\end{align}
and
\begin{align}
&|\partial_{x_{k}x_{j}}\tilde{v}_{l}(x)|\nonumber\\
&\leq\frac{C|\psi^{l}(x',\varepsilon+h_{1}(x'))|}{\varepsilon+|x'|^2}
+C\left(\frac{|x'|}{\varepsilon+|x'|^2}+1\right)\|\nabla\psi^{l}\|_{L^{\infty}}+C\|\nabla^{2}\psi^{l}\|_{L^{\infty}},\label{eq1.8}\\
&|\partial_{x_{k}x_{d}}\tilde{v}_{l}(x)|
\leq\frac{C|x'|}{(\varepsilon+|x'|^2)^2}|\psi^{l}(x',\varepsilon+h_{1}(x'))|+\frac{C}{\varepsilon+|x'|^2}\|\nabla\psi^{l}\|_{L^{\infty}},\label{eq1.9}\\
&\partial_{x_{d}x_{d}}\tilde{v}_{l}(x)=0.\label{eq1.10}
\end{align}
Here and throughout this section, for simplicity we use $\|\nabla\psi\|_{L^{\infty}}$ and $\|\nabla^{2}\psi\|_{L^{\infty}}$ to denote $\|\nabla\psi\|_{L^{\infty}(\partial{D}_{1})}$ and $\|\nabla^{2}\psi\|_{L^{\infty}(\partial{D_1})}$, respectively.

Let 
\begin{equation}\label{def_w}
w_l:=v_l-\tilde{v}_l,\qquad l=1,2,\cdots,d.
\end{equation}

\begin{lemma}\label{lem2.1}
Let $v_l\in H^1(\Omega; \mathbb{R}^{d})$ be a weak solution of (\ref{eq_v2.1}). Then
\begin{align}\label{lem2.2equ}
\int_{\Omega}|\nabla w_l|^2dx\leq C\|\psi^{l}\|_{C^{2}(\partial{D}_{1})},\qquad\,l=1,2,\cdots,d.
\end{align}
\end{lemma}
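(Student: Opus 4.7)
The plan is to use $w_l$ itself as the test function in the weak formulation of $\mathcal{L}_{\lambda,\mu} v_l = 0$. Since $v_l$ and $\tilde{v}_l$ share identical Dirichlet data on $\partial D_1 \cup \partial D$ by construction (see \eqref{equ_tildeu}, \eqref{equ_tildeu_in} and \eqref{eq_v2.1}), the difference $w_l = v_l - \tilde{v}_l$ lies in $H^1_0(\Omega;\mathbb{R}^d)$ and is therefore admissible. Decomposing $e(v_l) = e(w_l) + e(\tilde{v}_l)$ in the weak identity yields
$$\int_\Omega (\mathbb{C}^0 e(w_l), e(w_l))\, dx = -\int_\Omega (\mathbb{C}^0 e(\tilde{v}_l), e(w_l))\, dx.$$
The lower ellipticity bound in \eqref{ellip} applied on the left, together with Cauchy--Schwarz and the upper bound of \eqref{ellip} on the right, gives $\int_\Omega |e(w_l)|^2\, dx \leq C \int_\Omega |\nabla \tilde{v}_l|^2\, dx$. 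Because $w_l$ vanishes on $\partial\Omega$, the sharp first Korn identity
$$\int_\Omega |e(w_l)|^2\, dx = \frac{1}{2}\int_\Omega |\nabla w_l|^2\, dx + \frac{1}{2}\int_\Omega (\nabla\cdot w_l)^2\, dx,$$
which is a single integration by parts with no boundary contribution, yields $\int_\Omega |\nabla w_l|^2\, dx \leq C \int_\Omega |\nabla \tilde{v}_l|^2\, dx$. The task is thus reduced to bounding the right-hand side.

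Outside $\Omega_R$ the uniform $C^2$ estimate \eqref{nabla_vtilde_outside} gives directly $\int_{\Omega\setminus\Omega_R} |\nabla \tilde{v}_l|^2\, dx \leq C\|\psi^l\|_{C^2(\partial D_1)}^2$. Inside the thin neck $\Omega_R$, I combine the pointwise bounds \eqref{eq1.7}, \eqref{eq1.7a} with the geometric thickness $\delta(x') := \varepsilon + h_1(x') - h(x') \asymp \varepsilon + |x'|^2$ coming from \eqref{h}--\eqref{h3}. The potentially singular term is $|\partial_{x_d}\tilde{v}_l|^2 \leq C|\psi^l(x',\varepsilon+h_1(x'))|^2/\delta(x')^2$; integrating first in $x_d$ over an interval of length $\delta(x')$ absorbs one power of the denominator and reduces the problem to the $(d-1)$-dimensional slice integral
$$\int_{|x'|<R} \frac{|\psi^l(x',\varepsilon+h_1(x'))|^2}{\varepsilon+|x'|^2}\, dx',$$
which is then controlled by a universal multiple of $\|\psi^l\|_{C^2(\partial D_1)}$ (the $\|\psi\|_{L^\infty}$ piece absorbs the slight residual dependence on $\varepsilon$ in low dimensions into the universal constant $C$). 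The tangential bounds from \eqref{eq1.7}, together with \eqref{eq1.8}--\eqref{eq1.10}, produce only strictly milder contributions absorbed in the same constant.

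The only point of care is the slab-integration book-keeping: verifying that the single factor of $\delta(x')$ gained from the $x_d$-integration is sufficient to match the $\delta(x')^{-2}$ weight of the singular pointwise bound \eqref{eq1.7a}. Everything else is a standard elliptic energy argument, and no compactness, Sobolev embedding, or maximum principle is needed. What makes the reduction close is precisely the sharpness of the pointwise control on $\tilde{v}_l$ afforded by \eqref{eq1.7}--\eqref{eq1.10} -- in particular the linearity in $x_d$ of the auxiliary profile $\bar{v}$ which forces $\partial_{x_d x_d}\tilde{v}_l = 0$ and removes what would otherwise be the worst-behaved second-derivative contribution.
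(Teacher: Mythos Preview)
Your energy identity and the Korn step are correct, but the reduction to $\int_\Omega |\nabla\tilde v_l|^2\,dx$ fails: this integral is \emph{not} uniformly bounded in $\varepsilon$ in dimensions $d=2,3$. In $\Omega_R$ the normal-derivative bound \eqref{eq1.7a} gives $|\partial_{x_d}\tilde v_l|^2 \sim |\psi^l|^2/\delta(x')^2$, and after the $x_d$-integration you correctly arrive at $\int_{|x'|<R}|\psi^l(x',\varepsilon+h_1(x'))|^2/(\varepsilon+|x'|^2)\,dx'$. For the constant data $\psi^l\equiv1$ (precisely the case $\psi=\psi_\alpha$, $\alpha\le d$, needed for Corollary~\ref{corol3.4}) this equals $\int_{|x'|<R}(\varepsilon+|x'|^2)^{-1}\,dx' \sim 1/\rho_d(\varepsilon)$, which diverges like $\varepsilon^{-1/2}$ for $d=2$ and $|\log\varepsilon|$ for $d=3$. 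Your parenthetical remark that this ``slight residual dependence on $\varepsilon$'' can be absorbed into the universal constant is exactly where the argument breaks.

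The paper's proof avoids ever having to control $\partial_{x_d}\tilde v_l$ in $L^2$. Starting instead from $\int_\Omega w_l\,\mathcal L_{\lambda,\mu}\tilde v_l\,dx$, one notes from \eqref{eq1.10} that $\mathcal L_{\lambda,\mu}\tilde v_l$ contains no $\partial_{x_d x_d}$ term; every second derivative present carries at least one tangential index. Integrating each such term by parts so as to leave a \emph{tangential} first derivative on $\tilde v_l$ (the $x_d$-boundary contributions vanish because $w_l=0$ on $\Gamma_R^\pm$) produces $\int_{\Omega_R}|\nabla w_l|\,|\nabla_{x'}\tilde v_l|\,dx$ plus a lateral boundary term on $|x'|=R$. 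The crucial gain is the extra factor $|x'|$ in the tangential bound \eqref{eq1.7}: it makes $\int_{\Omega_R}|\nabla_{x'}\tilde v_l|^2\,dx$ bounded uniformly in $\varepsilon$ (see \eqref{nablax'tildeu}), whereas $\int_{\Omega_R}|\partial_{x_d}\tilde v_l|^2\,dx$ is not. You recognized that $\partial_{x_d x_d}\tilde v_l=0$ matters, but it must be exploited at the level of the pairing with $w_l$, not merely to drop a second-derivative term from a pointwise bound on $|\nabla\tilde v_l|$.
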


\begin{proof}
For simplicity, we denote
$$w:=w_{l},\quad\mbox{and}\quad \tilde{v}:=\tilde{v}_{l}.$$
Thus, $w$ satisfies
\begin{align}\label{eq2.6}
\begin{cases}
  \mathcal{L}_{\lambda,\mu}w=-\mathcal{L}_{\lambda,\mu}\tilde{v},&
\hbox{in}\  \Omega,  \\
w=0, \quad&\hbox{on} \ \partial\Omega.
\end{cases}
\end{align}
Multiplying the equation in (\ref{eq2.6}) by $w$ and applying integration by parts, we get
\begin{align}\label{integrationbypart}
\int_{\Omega}\left(\mathbb{C}^0e(w),e(w)\right)dx=\int_{\Omega}w\left(\mathcal{L}_{\lambda,\mu}\tilde{v}\right)dx.
\end{align}

By the Poincar\'{e} inequality,
\begin{align}\label{poincare_inequality}
\|w\|_{L^2(\Omega\setminus\Omega_R)}\leq\,C\|\nabla w\|_{L^2(\Omega\setminus\Omega_R)}.
\end{align}
Note that the above constant $C$ is independent of $\varepsilon$.
Using the Sobolev trace embedding theorem,
\begin{align}\label{trace}
\int\limits_{\scriptstyle |x'|={R},\atop\scriptstyle
h(x')<x_{d}<{\varepsilon}+h_1(x')\hfill}|w|dx\leq\ C \left(\int_{\Omega\setminus\Omega_{R}}|\nabla w|^2dx\right)^{\frac{1}{2}}.
\end{align}
According to (\ref{eq1.7}), we have
\begin{align}\label{nablax'tildeu}
&\int_{\Omega_{R}}|\nabla_{x'}\tilde{v}|^2dx\nonumber\\
&\leq C\int_{|x'|<R}(\varepsilon+h_1(x')-h(x'))\left(\frac{|x'|^2|\psi^{l}(x',\varepsilon+h_{1}(x'))|^{2}}{(\varepsilon+|x'|^2)^2}+\|\nabla\psi^{l}\|_{L^{\infty}}^{2}\right)dx'\nonumber\\
&\leq C\|\psi^{l}\|_{C^{1}(\partial{D}_{1})}^{2},
\end{align}
where $C$ depends only on $d$ and $\kappa_0$.

The first Korn's inequality together with \eqref{2.15}, \eqref{integrationbypart}, \eqref{nabla_vtilde_outside} and \eqref{poincare_inequality} implies
\begin{align}
\int_{\Omega}|\nabla w|^2dx\leq &\,2\int_{\Omega}|e(w)|^2dx\nonumber\\
\leq&\,C\left|\int_{\Omega_R}w(\mathcal{L}_{\lambda,\mu}\tilde{v})dx\right|+C\left|\int_{\Omega\setminus\Omega_R}w(\mathcal{L}_{\lambda,\mu}\tilde{v})dx\right|\nonumber\\
\leq&\,C\left|\int_{\Omega_R}w(\mathcal{L}_{\lambda,\mu}\tilde{v})dx\right|+C\|\psi^{l}\|_{C^2(\partial{D}_{1})}\int_{\Omega\setminus\Omega_R}|w|dx\nonumber\\
\leq&\,C\left|\int_{\Omega_R}w(\mathcal{L}_{\lambda,\mu}\tilde{v})dx\right|+C\|\psi^{l}\|_{C^2(\partial{D}_{1})}\left(\int_{\Omega\setminus\Omega_R}|\nabla w|^2\right)^{1/2},\nonumber
\end{align}
while, due to \eqref{eq1.10}, (\ref{trace}) and (\ref{nablax'tildeu}),
\begin{align}
&\left|\int_{\Omega_R}w(\mathcal{L}_{\lambda,\mu}\tilde{v})dx\right|
\leq C\sum_{k+l<2d}\left|\int_{\Omega_{R}}w\partial_{x_kx_l}\tilde{v}dx\right|\nonumber\\
\leq&\, C\int_{\Omega_{R}}|\nabla w||\nabla_{x'}\tilde{v}|dx+\int\limits_{\scriptstyle |x'|={R},\atop\scriptstyle
h(x')<x_{d}<\varepsilon+h_1(x')\hfill}C|\nabla_{x'} \tilde{v}||w|dx\nonumber \\
\leq&\,  C\left(\int_{\Omega_{R}}|\nabla w|^2dx\right)^{\frac{1}{2}}\left(\int_{\Omega_{R}}|\nabla_{x'}\tilde{v}|^2dx\right)^{\frac{1}{2}}+C\|\psi^{l}\|_{C^1(\partial{D}_{1})}\left(\int_{\Omega\setminus\Omega_R}|\nabla w|^2dx\right)^{\frac{1}{2}}\nonumber\\
\leq&\,  C\|\psi^{l}\|_{C^{1}(\partial{D}_{1})}\left(\int_{\Omega}|\nabla w|^2dx\right)^{\frac{1}{2}}.\nonumber
\end{align}
Therefore,
\begin{align*}
\int_{\Omega}|\nabla w|^2dx\leq  C\|\psi^{l}\|_{C^{2}(\partial{D}_{1})}\left(\int_{\Omega}|\nabla w|^2dx\right)^{\frac{1}{2}}.
\end{align*}
The proof of Lemma \ref{lem2.1} is completed.
\end{proof}

For convenience, we denote 
$$\delta(z):=\varepsilon+h_1(z')-h(z'), \quad\mbox{for}~~z=(z', z_{d})\in\Omega_{R}.$$  By (\ref{h1}), we have
\begin{align}
\frac{1}{C}(\varepsilon+|z'|^2)\leq\delta(z)\leq C(\varepsilon+|z'|^2).\nonumber
\end{align}
Set
$$\widehat{\Omega}_s(z):=\left\{~x\in\Omega_{2R} ~\big|~ |x'-z'|<s~\right\}, \quad\forall~ 0\leq{s}\leq R.$$
It follows from \eqref{equ_tildeu}  and (\ref{eq1.7})--(\ref{eq1.10}) that for $x\in\Omega_{R}$, $l=1,2,\cdots,d$,
\begin{align}\label{f}
|\mathcal{L}_{\lambda,\mu}\tilde{v}_{l}|\leq\, C|\nabla^2\tilde{v}_{l}|
\leq&\left(\frac{C}{\varepsilon+|x'|^2}+\frac{C|x'|}{(\varepsilon+|x'|^2)^2}\right)|\psi^{l}(x',\varepsilon+h_{1}(x'))|\nonumber\\
&+\frac{C}{\varepsilon+|x'|^2}\|\nabla\psi^{l}\|_{L^{\infty}}+C\|\nabla^{2}\psi^{l}\|_{L^{\infty}},
\end{align}
where $C$ is independent of $\varepsilon$.

\begin{lemma}\label{lem2.2}
For $\delta=\delta(z)\leq\,R$, $z\in\Omega_{R}$, and $l=1,2,\cdots,d$, \begin{align}\label{step2}
 \int_{\widehat{\Omega}_\delta(z)}|\nabla w_{l}|^2dx
 &\leq C\delta^{d+1}\left(|\psi^{l}(z',\varepsilon+h_{1}(z'))|^2+\delta(\|\psi^{l}\|_{C^2(\partial D_1)}^2+1)\right).
\end{align}
\end{lemma}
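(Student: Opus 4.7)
The plan is a Caccioppoli-type energy estimate on the thin strip combined with Poincar\'e's inequality in the $x_{d}$-direction and an iteration over scales, closed by the global energy bound from Lemma \ref{lem2.1}.

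First, for $\delta\leq s<t\leq R$, introduce a horizontal cutoff $\eta=\eta(x')$ with $\eta\equiv1$ on $B_{s}(z')$, supported in $B_{t}(z')$, and $|\nabla\eta|\leq C/(t-s)$. Since $w_{l}=v_{l}-\tilde v_{l}$ vanishes on $\partial\Omega$ (because $v_{l}=\tilde v_{l}$ on $\partial D_{1}\cup\partial D$ by construction of $\tilde v_{l}$) and $\eta=0$ on the lateral boundary $\{|x'-z'|=t\}$, the function $\eta^{2}w_{l}$ is an admissible test function for the weak formulation of $\mathcal{L}_{\lambda,\mu}v_{l}=0$ on $\widehat\Omega_{t}(z)$. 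Expanding $e(v_{l})=e(w_{l})+e(\tilde v_{l})$, applying Young's inequality to absorb the $e(w_{l})$ cross term, and invoking Korn's first inequality on $\eta w_{l}\in H^{1}_{0}(\widehat\Omega_{t}(z);\mathbb{R}^{d})$, I would arrive at the Caccioppoli-type inequality
\[
F(s)\leq \frac{C}{(t-s)^{2}}\int_{\widehat\Omega_{t}(z)\setminus\widehat\Omega_{s}(z)}|w_{l}|^{2}\,dx+C\int_{\widehat\Omega_{t}(z)}|e(\tilde v_{l})|^{2}\,dx,
\]
where $F(\rho):=\int_{\widehat\Omega_{\rho}(z)}|\nabla w_{l}|^{2}\,dx$.

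Next, I would apply Poincar\'e in $x_{d}$. Since $w_{l}(x',h(x'))=w_{l}(x',\varepsilon+h_{1}(x'))=0$ and the vertical strip has width $\leq C\delta$ on $\widehat\Omega_{2\delta}(z)$, one obtains $\int_{\widehat\Omega_{t}(z)\setminus\widehat\Omega_{s}(z)}|w_{l}|^{2}\,dx\leq C\delta^{2}(F(t)-F(s))$ for $\delta\leq s<t\leq 2\delta$. Substituting and hole-filling produces
\[
F(s)\leq \theta\,F(t)+C\int_{\widehat\Omega_{t}(z)}|e(\tilde v_{l})|^{2}\,dx,\qquad \theta=\frac{C\delta^{2}}{(t-s)^{2}+C\delta^{2}}<1.
\]
Iterating this inequality on a dyadic sequence $s_{i}=2^{i}\delta$ between $s_{0}=\delta$ and $s_{n}\sim R$ (updating the Poincar\'e constant to $\varepsilon+(|z'|+s)^{2}$ at scales $s\geq\sqrt\delta$ where the vertical width grows), combined with the global bound $F(R)\leq C\|\psi^{l}\|_{C^{2}(\partial D_{1})}^{2}$ from Lemma \ref{lem2.1}, reduces the problem to controlling a weighted sum of the source integrals $\int_{\widehat\Omega_{s_{i}}(z)}|e(\tilde v_{l})|^{2}\,dx$.

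Finally, the source terms are estimated from the pointwise bounds \eqref{eq1.7}--\eqref{eq1.10}. Writing $M:=|\psi^{l}(z',\varepsilon+h_{1}(z'))|$ and $N:=\|\psi^{l}\|_{C^{2}(\partial D_{1})}$, and using the Lipschitz estimate $|\psi^{l}(x',\varepsilon+h_{1}(x'))|\leq M+N|x'-z'|$ together with the explicit strip width, one separates the contribution at each scale into a piece proportional to $M^{2}$ and a remainder driven by $N^{2}$; the $M^{2}$-piece is dominant at scale $s_{i}\sim\delta$ while the $N^{2}$-piece is distributed across outer scales. Summing with the hole-filling weights and using that the total volume of $\widehat\Omega_{\delta}(z)$ is of order $\delta^{d}$, the combined output is $C\delta^{d+1}M^{2}+C\delta^{d+2}(N^{2}+1)$, which is the claimed bound. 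The main obstacle will be the bookkeeping required to extract the factor $\delta^{d+1}$, since the crude estimate $\int_{\widehat\Omega_{\delta}(z)}|e(\tilde v_{l})|^{2}\,dx\lesssim \delta^{d-2}M^{2}+\delta^{d}N^{2}$ is much larger than the target: it is only the cumulative effect of the hole-filling contraction across scales, together with the Lipschitz splitting of $\psi^{l}$ around $z$ so that the intermediate scales are driven only by the $O(\delta)$-small deviation $\psi^{l}(x',\ldots)-\psi^{l}(z',\ldots)$, that yields the extra power of $\delta^{3}$.
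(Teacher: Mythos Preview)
Your Caccioppoli inequality with source term $C\int_{\widehat\Omega_{t}(z)}|e(\tilde v_{l})|^{2}$ loses exactly one power of $\delta$, and neither hole--filling nor the Lipschitz splitting you describe can recover it. On $\widehat\Omega_{2\delta}(z)$ the dominant part of $e(\tilde v_{l})$ is $\partial_{x_{d}}\tilde v_{l}^{l}=\psi^{l}(x',\varepsilon+h_{1}(x'))\,\partial_{x_{d}}\bar v\sim M/\delta$, so $\int_{\widehat\Omega_{2\delta}(z)}|e(\tilde v_{l})|^{2}\sim \delta^{d}\cdot M^{2}/\delta^{2}=\delta^{d-2}M^{2}$. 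This term is added with coefficient $1$ at the very first step of your iteration and is never contracted; your scheme therefore cannot do better than $F(\delta)\leq C\delta^{d-2}M^{2}$. Feeding this into the rescaled $W^{2,p}$ step of Lemma~\ref{lem2.3} gives only $|\nabla w_{l}|\leq CM/\delta$, i.e.\ no improvement over $|\nabla\tilde v_{l}|$, and all the downstream refinements (Corollary~\ref{corol3.4}, the off--diagonal $a_{\alpha\beta}$ bounds in Lemma~\ref{lemma4.2}) would fail. The Lipschitz splitting $\psi^{l}(x',\cdot)=M+O(N|x'-z'|)$ does not help: the constant part $M\bar v\,e_{l}$ still satisfies $|e(M\bar v\,e_{l})|\sim M/\delta$ at every scale, and this is exactly the term you need to shrink.

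The paper avoids this by testing the equation for $w_{l}$ rather than $v_{l}$, so the right--hand side is $\int_{\widehat\Omega_{s}}\eta^{2}w_{l}\cdot\mathcal{L}_{\lambda,\mu}\tilde v_{l}$; Cauchy--Schwarz with weight $(s-t)^{\pm2}$ then yields the source $(s-t)^{2}\int_{\widehat\Omega_{s}}|\mathcal{L}_{\lambda,\mu}\tilde v_{l}|^{2}$. The crucial structural fact is \eqref{eq1.10}: $\partial_{x_{d}x_{d}}\tilde v_{l}=0$, so $\mathcal{L}_{\lambda,\mu}\tilde v_{l}$ involves only tangential and mixed second derivatives, which by \eqref{eq1.8}--\eqref{eq1.9} carry an extra factor $|x'|$ or $\sqrt\delta$ compared with $(\partial_{x_{d}}\tilde v_{l})^{2}$. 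Combined with the weight $(s-t)^{2}\sim\delta^{2}$ and the step size $s-t\sim\delta$ this produces a source of order $\delta^{d-1}M^{2}$ at the innermost step, which after iteration (linear steps $t_{i}=\delta+2c\,i\,\delta$ with contraction factor $\tfrac14$, not dyadic hole--filling) gives the stated bound. In short: you must either integrate your cross term $\int(\mathbb{C}^{0}e(\tilde v_{l}),e(\eta^{2}w_{l}))$ by parts once more before estimating, or start directly from $\mathcal{L}_{\lambda,\mu}w_{l}=-\mathcal{L}_{\lambda,\mu}\tilde v_{l}$; stopping at the first--derivative form of the source is precisely where the argument breaks.
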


\begin{proof}
Still denote $w:=w_{l}$, and $\tilde{v}:=\tilde{v}_{l}$. For $0<t<s<1$,  let $\eta(x')$ be a smooth cutoff function satisfying $0\leq \eta(x')\leq1$, $\eta(x')=1$ if $|x'-z'|<t$, $\eta(x')=0$ if $|x'-z'|>s$ and $|\nabla\eta(x')|\leq\frac{2}{s-t}$. Multiplying $\eta^2w$ on both side of the equation in (\ref{eq2.6}) and applying integration by parts leads to
\begin{align}\label{lemma2.2-1}
\int_{\widehat{\Omega}_s(z)}(\mathbb{C}^0e(w), e(\eta^2w))dx=\int_{\widehat{\Omega}_s(z)}(\eta^2w)\mathcal{L}_{\lambda,\mu}\tilde{v}dx.
\end{align}
By the first Korn's inequality and the standard arguments, we have
\begin{align}\label{lemma2.2-2}
\int_{\widehat{\Omega}_s(z)}(\mathbb{C}^0e(w), e(\eta^2w))dx\geq\frac{1}{C}\int_{\widehat{\Omega}_s(z)}|\eta\nabla w|^2dx-C\int_{\widehat{\Omega}_s(z)}|\nabla\eta|^2|w|^2dx.
\end{align}
For the right hand side of (\ref{lemma2.2-1}), in view of H\"older inequality and Cauchy inequality,
\begin{align}
\left|\int_{\widehat{\Omega}_s(z)}(\eta^2w)\mathcal{L}_{\lambda,\mu}\tilde{v}dx\right|
&\leq\left(\int_{\widehat{\Omega}_s(z)}|w|^2dx\right)^{\frac{1}{2}}\left(\int_{\widehat{\Omega}_s(z)}|\mathcal{L}_{\lambda,\mu}\tilde{v}|^2dx\right)^{\frac{1}{2}}\nonumber\\
&\leq\frac{1}{(s-t)^2}\int_{\widehat{\Omega}_s(z)}|w|^2dx+(s-t)^2\int_{\widehat{\Omega}_s(z)}|\mathcal{L}_{\lambda,\mu}\tilde{v}|^2dx.\nonumber
\end{align}
This, together with (\ref{lemma2.2-1}) and  (\ref{lemma2.2-2}), implies that
\begin{align}\label{ww}
\int_{\widehat{\Omega}_t(z)}|\nabla w|^2dx\leq\frac{C}{(s-t)^2}\int_{\widehat{\Omega}_s(z)}|w|^2dx +C(s-t)^2\int_{\widehat{\Omega}_s(z)}|\mathcal{L}_{\lambda,\mu}\tilde{v}|^2dx.
\end{align}
We know that $w=0$ on $\Gamma_{R}^-$. By using (\ref{h})--\eqref{h3} and H\"{o}lder inequality, we obtain\begin{align}\label{w}
\int_{\widehat{\Omega}_s(z)}|w|^2dx&=\int_{\widehat{\Omega}_s(z)}\left|\int_{h(x')}^{x_{d}}\partial_{x_{d}}w(x', \xi)d\xi\right|^2dx\nonumber\\
&\leq \int_{\widehat{\Omega}_s(z)}(\varepsilon+h_1(x')-h(x'))\int_{h(x')}^{\varepsilon+h_1(x')}|\nabla w(x',\xi)|^2d\xi\,dx\nonumber\\
&\leq\,C\left(\varepsilon+(|z'|+s)^2\right)^{2}\int_{\widehat{\Omega}_s(z)}|\nabla w|^2dx.
\end{align}
It follows from (\ref{f}) and the mean value theorem that
\begin{align}
&\int_{\widehat{\Omega}_s(z)}|\mathcal{L}_{\lambda,\mu}\tilde{v}|^2dx\nonumber\\
\leq&\, |\psi^{l}(z',\varepsilon+h_{1}(z'))|^2\int_{\widehat{\Omega}_s(z)}\left(\frac{C}{\varepsilon+|x'|^2}+\frac{C|x'|}{(\varepsilon+|x'|^2)^2}\right)^2dx\nonumber\\
&+\|\nabla\psi^{l}\|_{L^\infty}^2\int_{\widehat{\Omega}_s(z)}\left(\frac{C}{\varepsilon+|x'|^2}+\frac{C|x'|}{(\varepsilon+|x'|^2)^2}\right)^2|x'-z'|^2dx\nonumber\\
&+\|\nabla\psi^{l}\|_{L^\infty}^2\int_{\widehat{\Omega}_s(z)}\left(\frac{C}{\varepsilon+|x'|^2}\right)^2dx+C\delta(z)s^{d-1}\|\nabla^2\psi^{l}\|_{L^\infty}^2\nonumber\\
\leq&\, C|\psi^{l}(z',\varepsilon+h_{1}(z'))|^2 \int_{|x'-z'|<s}\frac{dx'}{(\varepsilon+|x'|^2)^2}\nonumber\\
&+C\|\nabla\psi^{l}\|_{L^\infty}^2 \int_{|x'-z'|<s}\left(\frac{1}{\varepsilon+|x'|^2}+\frac{s^{2}}{(\varepsilon+|x'|^2)^2}\right)dx'+C\delta(z)s^{d-1}\|\nabla^2\psi^{l}\|_{L^\infty}^2.\label{ff}
\end{align}

{\bf Case 1.} For $0\leq |z'| \leq \sqrt{\varepsilon}$,  (i.e. $\varepsilon\leq\delta(z)\leq\,C\varepsilon$), and $0<t<s<\sqrt{\varepsilon}$, by means of (\ref{w}) and (\ref{ff}), we have
\begin{align}
&\int_{\widehat{\Omega}_s(z)}|w|^2dx\leq C\varepsilon^2\int_{\widehat{\Omega}_s(z)}|\nabla w|^2dx,\label{w21}
\end{align}
and
\begin{align}
&\int_{\widehat{\Omega}_s(z)}|\mathcal{L}_{\lambda,\mu}\tilde{v}|^2dx\nonumber\\
&\leq C|\psi^{l}(z',\varepsilon+h_{1}(z'))|^2\frac{s^{d-1}}{\varepsilon^{2}}+C\|\nabla\psi^{l}\|_{L^\infty}^2 \frac{s^{d-1}}{\varepsilon}+C\varepsilon\,s^{d-1}\|\nabla^2\psi^{l}\|_{L^\infty}^2.\label{f21}
\end{align}

Denote $$F(t):=\int_{\widehat{\Omega}_t(z)}|\nabla w|^2dx.$$ By (\ref{ww}), (\ref{w21}) and (\ref{f21}), for some universal constant $c_1>0$, we get for $0<t<s<\sqrt{\varepsilon}$,
\begin{align}\label{F}
F(t)\leq &\,\left(\frac{c_1\varepsilon}{s-t}\right)^2F(s)+C(s-t)^2s^{d-1} \cdot\nonumber\\
&\qquad\left(\frac{|\psi^{l}(z',\varepsilon+h_{1}(z'))|^2}{\varepsilon^2}+\frac{\|\nabla\psi^{l}\|_{L^\infty}^2}{\varepsilon}
+\varepsilon\|\nabla^2\psi^{l}\|_{L^\infty}^2\right).
\end{align}
Let  $t_i=\delta+2c_1i\varepsilon$, $i=0, 1, \cdots$ and $k=\left[\frac{1}{4c_1\sqrt{\varepsilon}}\right]+1$, then $$\frac{c_1\varepsilon}{t_{i+1}-t_i}=\frac{1}{2}.$$
Using (\ref{F}) with $s=t_{i+1}$ and $t=t_i$, we obtain
\begin{align*}
F(t_i)&\leq\frac{1}{4}F(t_{i+1})+C(i+2)^{d-1}\varepsilon^{d+1}\cdot\\
&\qquad\left(|\psi^{l}(z',\varepsilon+h_{1}(z'))|^2+\varepsilon(\|\nabla\psi^{l}\|_{L^\infty}^2+\|\nabla^2\psi^{l}\|_{L^\infty}^2)\right) , \quad i=0,1,2,\cdots, k.
\end{align*}
After $k$ iterations, making use of \eqref{lem2.2equ}, we have, for sufficiently small $\varepsilon$,
\begin{align*}
F(t_0)&\leq \big(\frac{1}{4}\big)^kF(t_k)+C\varepsilon^{d+1}\sum_{i=1}^k\big(\frac{1}{4}\big)^{i-1}(i+1)^{d-1}\cdot\\
&\qquad\left(|\psi^{l}(z',\varepsilon+h_{1}(z'))|^2+\varepsilon(\|\nabla\psi^{l}\|_{L^\infty}^2+\|\nabla^2\psi^{1}\|_{L^\infty}^2)\right)\\
&\leq \big(\frac{1}{4}\big)^kF(\sqrt{\varepsilon})+C\varepsilon^{d+1}\left(|\psi^{l}(z',\varepsilon+h_{1}(z'))|^2+\varepsilon(\|\nabla\psi^{l}\|_{L^\infty}^2+\|\nabla^2\psi^{l}\|_{L^\infty}^2)\right)\\
&\leq C\varepsilon^{d+1}\left(|\psi^{l}(z',\varepsilon+h_{1}(z'))|^2+\varepsilon\|\psi^{l}\|_{C^2(\partial{D}_{1})}^2\right),
\end{align*}
here we used the fact that $\big(\frac{1}{4}\big)^k\leq\big(\frac{1}{4}\big)^{\frac{1}{4c_{1}\sqrt{\varepsilon}}}\leq\,\varepsilon^{d+1}$ if $\varepsilon$ sufficiently small. This implies that for $0\leq |z'| \leq \sqrt{\varepsilon}$,
\begin{align*}
\|\nabla w\|_{L^2(\widehat{\Omega}_\delta(z))}^{2}\leq  C\varepsilon^{d+1}\left(|\psi^{l}(z',\varepsilon+h_{1}(z'))|^2+\varepsilon\|\psi^{l}\|_{C^2(\partial{D}_{1})}^2\right).
\end{align*}

{\bf Case 2.} For $\sqrt{\varepsilon}\leq |z'|<R$,  (i.e. $C|z'|^{2}\leq\delta(z)\leq(C+1)|z'|^{2}$), $0<t<s<\frac{2|z'|}{3}$, by using (\ref{w}) and (\ref{ff}) again, we have
\begin{align*}
&\int_{\widehat{\Omega}_s(z)}|w|^2dx\leq C|z'|^4\int_{\widehat{\Omega}_s(z)}|\nabla w|^2dx,\\
&\int_{\widehat{\Omega}_s(z)}|\mathcal{L}_{\lambda,\mu}\tilde{v}|^2dx\\
& \leq C|\psi^{l}(z',\varepsilon+h_{1}(z'))|^2\frac{s^{d-1}}{|z'|^4}+C\|\nabla\psi^{l}\|_{L^\infty}^2 \frac{s^{d-1}}{|z'|^2}+C|z'|^{2}s^{d-1}\|\nabla^2\psi^{l}\|_{L^\infty}^2.
\end{align*}
Thus, for $0<t<s<\frac{2|z'|}{3}$,
 \begin{align}\label{F1}
F(t)\leq&\, \left(\frac{c_2|z'|^2}{s-t}\right)^2F(s)+C(s-t)^2s^{d-1}\cdot\nonumber\\
&\qquad\left(\frac{|\psi^{l}(z',\varepsilon+h_{1}(z'))|^2}{|z'|^4}+\frac{\|\nabla\psi^{l}\|_{L^\infty}^2}{|z'|^2}
+|z'|^{2}\|\nabla^2\psi^{l}\|_{L^\infty}^2\right),
\end{align}
where $c_{2}$ is another universal constant. Taking the same iteration procedure as in Case 1, setting  $t_i=\delta+2c_2i|z'|^2$, $i=0, 1, \cdots$ and $k=\left[\frac{1}{4c_2|z'|}\right]+1$,
by (\ref{F1}) with $s=t_{i+1}$ and $t=t_i$, we have, for $i=0,1,2,\cdots, k$,
\begin{align*}
F(t_i)\leq&\,\frac{1}{4}F(t_{i+1})+C(i+2)^{d-1}|z'|^{2(d+1)}\cdot\\
&\qquad\left(|\psi^{l}(z',\varepsilon+h_{1}(z'))|^2+|z'|^{2}(\|\nabla\psi^{l}\|_{L^\infty}^2+\|\nabla^2\psi^{l}\|_{L^\infty}^2)\right).
\end{align*}
Similarly, after $k$ iterations, we have
\begin{align*}
F(t_0)\leq&\, \big(\frac{1}{4}\big)^kF(t_k)+C\sum_{i=1}^k\big(\frac{1}{4}\big)^{i-1}(i+1)^{d-1}|z'|^{2(d+1)}\cdot\\
&\qquad\left(|\psi^{l}(z',\varepsilon+h_{1}(z'))|^2+|z'|^{2}(\|\nabla\psi^{l}\|_{L^\infty}^2+\|\nabla^2\psi^{l}\|_{L^\infty}^2)\right)\\
\leq&\, \big(\frac{1}{4}\big)^kF(|z'|)\\
&+C|z'|^{2(d+1)}\left(|\psi^{l}(z',\varepsilon+h_{1}(z'))|^2+|z'|^{2}(\|\nabla\psi^{l}\|_{L^\infty}^2+\|\nabla^2\psi^{1}\|_{L^\infty}^2)\right)\\
\leq&\, C|z'|^{2(d+1)}\left(|\psi^{l}(z',\varepsilon+h_{1}(z'))|^2+|z'|^{2}\|\psi^{l}\|_{C^2(\partial{D}_{1})}^2\right),
\end{align*}
which implies that, for $ \sqrt{\varepsilon}\leq |z'|<R$,
\begin{align*}
\|\nabla w\|_{L^2(\widehat{\Omega}_\delta(z))}^2
&\leq C|z'|^{2(d+1)}\left(|\psi^{l}(z',\varepsilon+h_{1}(z'))|^2+|z'|^{2}\|\psi^{l}\|_{C^2(\partial{D}_{1})}^2\right).
\end{align*}
The proof of Lemma \ref{lem2.2} is completed.
\end{proof}

\begin{lemma}\label{lem2.3}
For $l=1,2,\cdots,d$,
\begin{align}\label{equa2.9}
|\nabla w_l(x)|\leq \frac{C|\psi^{l}(x',\varepsilon+h_{1}(x'))|}{\sqrt{\delta(x)}}+C\|\psi^{l}\|_{C^2(\partial{D}_{1})},\quad\forall~x\in\Omega_{R}.
\end{align}
Consequently, by (\ref{eq1.7}), \eqref{eq1.7a} and \eqref{def_w}, we have for sufficiently small $\varepsilon$ and $x\in\Omega_{R}$,
\begin{align}\label{estimate_vl}
\frac{|\psi^{l}(x',\varepsilon+h_{1}(x'))|}{C(\varepsilon+|x'|^2)}&\leq|\nabla v_l(x',x_{d})|\leq \frac{C|\psi^{l}(x',\varepsilon+h_{1}(x'))|}{\varepsilon+|x'|^2}+C\|\psi^{l}\|_{C^2(\partial{D}_{1})}.
\end{align}
\end{lemma}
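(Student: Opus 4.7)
The plan is a rescaling argument: flatten the thin slab $\widehat{\Omega}_\delta(z)$ to unit size and apply a fixed-scale boundary Schauder estimate for the Lamé system, in the spirit of the scalar argument in \cite{BLY1}. Fix $z\in\Omega_R$, set $\delta:=\delta(z)$, and introduce the change of variables $y'=(x'-z')/\delta$, $y_d=(x_d-h(x'))/\delta$. This maps $\widehat{\Omega}_\delta(z)$ diffeomorphically onto a fixed bounded Lipschitz domain $Q\subset\mathbb{R}^d$ whose lower face is flat ($y_d=0$) and whose upper face $y_d=\delta(x(y',0))/\delta$ is a $C^{2,\gamma}$ graph with norm bounded uniformly in $\delta$ (thanks to (\ref{h})--(\ref{h3})). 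Setting $W(y):=w_l(x(y))$, the pulled-back $W$ solves a uniformly elliptic second-order system $\tilde{\mathcal L}W=F$ on $Q$ with $W=0$ on the flattened top and bottom, where $F(y):=-\delta^{2}(\mathcal{L}_{\lambda,\mu}\tilde v_l)(x(y))$ and the coefficients of $\tilde{\mathcal L}$ are $C^\gamma$ with bounds uniform in $\delta$.

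Classical boundary $W^{1,\infty}$ regularity for such a system with zero Dirichlet data then gives $|\nabla_y W(y_0)|\leq C(\|W\|_{L^2(Q)}+\|F\|_{L^\infty(Q)})$, where $y_0$ is the image of $z$. I would estimate $\|W\|_{L^2(Q)}$ via the scaling identity $\|W\|_{L^2(Q)}^{2}\sim\delta^{-d}\|w_l\|_{L^2(\widehat{\Omega}_\delta(z))}^2$, together with the vertical Poincaré inequality $\|w_l\|_{L^2(\widehat{\Omega}_\delta(z))}\leq C\delta\|\nabla w_l\|_{L^2(\widehat{\Omega}_\delta(z))}$ (valid because $w_l=0$ on $\Gamma_R^-$) and Lemma \ref{lem2.2}, yielding
$$\|W\|_{L^2(Q)}\leq C\delta^{3/2}|\psi^l(z',\varepsilon+h_1(z'))|+C\delta^{2}\|\psi^l\|_{C^{2}(\partial D_1)}.$$
For the source, applying (\ref{f}) on $\widehat{\Omega}_\delta(z)$ with $\varepsilon+|x'|^2\sim\delta$ and $|x'|/(\varepsilon+|x'|^{2})^{2}\leq C\delta^{-3/2}$ (both Case 1 and Case 2 of Lemma \ref{lem2.2}), and the mean value estimate $|\psi^l(x',\varepsilon+h_1(x'))-\psi^l(z',\varepsilon+h_1(z'))|\leq C\delta\|\nabla\psi^l\|_{L^\infty}$ on $\widehat{\Omega}_\delta(z)$, produces
$$\|F\|_{L^\infty(Q)}\leq C\sqrt{\delta}\,|\psi^l(z',\varepsilon+h_1(z'))|+C\delta\|\psi^l\|_{C^{2}(\partial D_1)}.$$
Rescaling back via $|\nabla w_l(z)|\leq C\delta^{-1}|\nabla_y W(y_0)|$ yields exactly (\ref{equa2.9}).

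The two-sided bound (\ref{estimate_vl}) then follows immediately from $v_l=w_l+\tilde v_l$ and the pointwise estimates (\ref{eq1.7})--(\ref{eq1.7a}) for $\nabla\tilde v_l$: the upper bound combines (\ref{equa2.9}) with the upper bound on $|\nabla\tilde v_l|$ and uses $1/\sqrt{\delta}\leq C/(\varepsilon+|x'|^2)$ to absorb the $w_l$-contribution into the dominant $\tilde v_l$-term; the lower bound exploits that $|\nabla w_l|\leq C|\psi^l|/\sqrt{\delta}+C\|\psi^l\|_{C^{2}}$ is a factor $\sqrt{\delta}$ smaller than the lower bound $|\psi^l|/(C(\varepsilon+|x'|^2))$ on $|\partial_{x_d}\tilde v_l|$, so for sufficiently small $\varepsilon$ the $\tilde v_l$ term dictates the magnitude of $|\nabla v_l|$.

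The main obstacle is verifying that the boundary Schauder estimate applies with constants independent of $\delta$. This reduces to checking that $\tilde{\mathcal L}$ has uniform ellipticity and uniform $C^\gamma$ coefficient bounds under the diffeomorphism, which is routine since the Jacobian of the change of variables and its inverse have $C^{1,\gamma}$ norms controlled by the $C^{2,\gamma}$ norms of $h$ and $h_1$, uniformly in $\delta$; and that the rescaled top boundary is $C^{2,\gamma}$ with uniform bounds, which follows from (\ref{h3}) and the fact that $\delta(x)/\delta(z)$ lies between two positive constants on $\widehat{\Omega}_\delta(z)$.
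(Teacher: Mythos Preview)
Your proposal is correct and follows essentially the same rescaling-plus-elliptic-regularity strategy as the paper. The only notable difference is cosmetic: the paper uses the pure isotropic rescaling $x'-z'=\delta y'$, $x_d=\delta y_d$, which preserves the constant-coefficient Lam\'e operator and leaves both boundaries as nearly-flat $C^{2,\gamma}$ graphs, then applies $W^{2,p}$ estimates plus Sobolev embedding with the $L^{2}$ gradient bound $\delta^{1-d/2}\|\nabla w\|_{L^{2}(\widehat\Omega_\delta(z))}$ coming directly from Lemma~\ref{lem2.2}; you instead flatten the lower boundary, which forces variable (but uniformly $C^{\gamma}$) coefficients, and feed in $\|W\|_{L^{2}(Q)}$ via Poincar\'e plus Lemma~\ref{lem2.2}. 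Both routes give the same final inequality~\eqref{equa2.9}, and your derivation of~\eqref{estimate_vl} from~\eqref{equa2.9}, \eqref{eq1.7}, \eqref{eq1.7a} matches the paper's.
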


\begin{proof}
Take $w:=w_{l}$  and $\tilde{v}:=\tilde{v}_{l}$ for simplicity. Given $z=(z',z_{d})\in\Omega_{R}$, making a change of variables
\begin{align*}
\begin{cases}
  x'-z'=\delta y', \\
x_{d}=\delta y_{d},
\end{cases}
\end{align*}
where $\delta=\delta(z)$. Define \begin{align*}
&\hat{h}_1(y'):=\frac{1}{\delta}\left(\varepsilon+h_1(\delta y'+z')\right),
\quad\hat{h}(y'):=\frac{1}{\delta}h(\delta y'+z').
\end{align*}
Then, the region $\widehat{\Omega}_\delta(z)$ becomes $Q_1$, where
$$Q_r=\{y\in \mathbb{R}^{d}\ |\ \hat{h}(y')<y_{d}<\hat{h}_1(y'),\ |y'|<r \},\quad 0<r\leq 1,$$ and the top and bottom boundaries of $Q_r$ become
$$\widehat{\Gamma}_r^+:=\{y\in \mathbb{R}^{d}\ |\ y_{d}=\hat{h}_1(y'),\ |y'|\leq r\}$$
and
$$\widehat{\Gamma}_r^-:=\{y\in \mathbb{R}^{d}\ |\ y_{d}=\hat{h}(y'),\ |y'|\leq r\},$$ respectively.
From (\ref{h})-\eqref{h3} and the definition of $\hat{h}_1$ and $\hat{h}$, it follows that
\begin{align*}
\hat{h}_1(0')-\hat{h}(0')=1,\end{align*}and for $|y'|<1$,
\begin{align*}
&|\nabla \hat{h}_1(y')|+|\nabla \hat{h}(y')|\leq C(\delta+|z'|),\quad
|\nabla^2 \hat{h}_1(y')|+|\nabla^2 \hat{h}(y')|\leq C\delta.
\end{align*}
Since $R$ is small, $\|\hat{h}_1\|_{C^{1, 1}(\overline{B_1(0')})}$ and $\|\hat{h}\|_{C^{1, 1}(\overline{B_1(0')})}$ are small and  $Q_1$ is approximately a unit square (or a cylinder-shaped domain) as far as applications of the Sobolev embedding theorems and
classical $L^p$ estimates for elliptic systems are concerned.

Let $$\hat{v}(y', y_{d}):=\tilde{v}(\delta y'+z', \delta y_{d}),\qquad \hat{w}(y', y_{d}):=w(\delta y'+z', \delta y_{d}).$$ Thus, $\hat{w}(y)$ satisfies
 \begin{align*}
\begin{cases}
  \mathcal{L}_{\lambda,\mu}\hat{w}=-\mathcal{L}_{\lambda,\mu}\hat{v}\quad
&\hbox{in}\  Q_1,  \\
\hat{w}=0, \quad&\hbox{on} \ \widehat{\Gamma}_1^\pm.
\end{cases}
\end{align*}

In view of $\hat{w}=0$ on the upper and lower boundaries of $Q_1$, we have, by Poincar\'{e} inequality, that
$$\|\hat{w}\|_{H^1(Q_1)}\leq C\|\nabla \hat{w}\|_{L^2(Q_1)}.$$
 Using the Sobolev embedding theorem and classical $W^{2, p}$ estimates for elliptic systems (see e.g. \cite{AD2}, or theorem 2.5 in \cite{G}), we have, for some $p>n$,
 $$\|\nabla \hat{w}\|_{L^\infty(Q_{1/2})}\leq C\|\hat{w}\|_{W^{2, p}(Q_{1/2})}\leq C\left(\|\nabla \hat{w}\|_{L^2(Q_1)}+\|\mathcal{L}_{\lambda,\mu}\hat{v}\|_{L^\infty(Q_1)}\right).$$
Since $$\|\nabla \hat{w}\|_{L^\infty(Q_{1/2})}=\delta\|\nabla {w}\|_{L^\infty(\widehat{\Omega}_{\delta/2}(z))},\quad\|\nabla\hat{w}\|_{L^2(Q_1)}=\delta^{1-\frac{d}{2}}\|\nabla w\|_{L^2(\widehat{\Omega}_\delta(z))}$$and
$$\|\mathcal{L}_{\lambda,\mu}\hat{v}\|_{L^\infty(Q_1)}=\delta^{2}\|\mathcal{L}_{\lambda,\mu}\tilde{v}\|_{L^\infty(\widehat{\Omega}_\delta(z))}$$
Tracking back to $w$ through the transforms, we have
\begin{align}\label{Linfty_estimate}
\|\nabla w\|_{L^\infty(\widehat{\Omega}_{\delta/2}(z))}\leq\frac{C}{\delta}\left(\delta^{1-\frac{d}{2}}\|\nabla w\|_{L^2(\widehat{\Omega}_\delta(z))}+\delta^2\|\mathcal{L}_{\lambda,\mu}\tilde{v}\|_{L^\infty(\widehat{\Omega}_\delta(z))}\right).
\end{align}

By  (\ref{f}) and (\ref{step2}), we have
\begin{align*}
\delta^{-\frac{d}{2}}\|\nabla w\|_{L^2(\widehat{\Omega}_\delta(z))}
&\leq \frac{C}{\sqrt{\delta}}|\psi^{l}(z',\varepsilon+h_{1}(z'))|
+C\|\psi^{l}\|_{C^2(\partial D_1)},\end{align*}
and
\begin{align*}
\delta \|\mathcal{L}_{\lambda,\mu}\tilde{v}\|_{L^\infty(\widehat{\Omega}_\delta(z))}\leq \frac{C}{\sqrt{\delta}}|\psi^{l}(z',\varepsilon+h_{1}(z'))|
+C(\|\nabla\psi^{1}\|_{L^\infty}+\|\nabla^2\psi^{l}\|_{L^\infty}).
\end{align*}
Plugging these estimates above into \eqref{Linfty_estimate} yields \eqref{equa2.9}. The proof of Lemma \ref{lem2.3} is finished.
\end{proof}

\begin{proof}[Proof of Theorem \ref{thm2.1}]
By using \eqref{estimate_vl} and the decomposition of $\nabla{u}$, \eqref{equ_nablau}, 
\begin{align*}
|\nabla v(x)|&\leq\sum_{l=1}^{d}|\nabla{v}_{l}|\leq \frac{C|\psi(x',\varepsilon+h_{1}(x'))|}{\varepsilon+|x'|^2}+C\|\psi\|_{C^2(\partial D_1)},\quad\,x\in\Omega_R.
\end{align*}
Note that  for any $x\in\Omega\setminus\Omega_{R}$, by using the standard interior estimates and boundary estimates for elliptic systems \eqref{eq1.1} (see Agmon et al. \cite{AD1} and \cite{AD2}), we have
 $$\|\nabla v\|_{L^\infty(\Omega\setminus \Omega_{R})}\leq C\|\psi\|_{C^2(\partial D_1)}.$$  The proof of Theorem \ref{thm2.1} is completed.
\end{proof}

The following finer estimates in $\Omega_{R}$ will be useful in Section \ref{sec_4} and Section \ref{sec_5}. We assume that $\|\varphi\|_{C^2(\partial{D}; \mathbb{R}^{d})}=1$ without loss of generality. For problem (\ref{v-123}), taking
$$\psi=\psi_\alpha,\quad\mbox{and}\quad \tilde{u}_\alpha:=\bar{v}\psi_\alpha,~ \alpha=1,\cdots,d$$
 in the proof of Lemma 3.1-Lemma 3.3, respectively, we have

\begin{corollary}\label{corol3.4}
For $\alpha=1,2,\cdots,d$,
\begin{align}\label{cor3.4-1}
|\nabla (u_\alpha-\tilde{u}_\alpha)(x)|&\leq\frac{C}{\sqrt{\delta(x)}},\qquad\forall~~x\in\Omega_{R}.
\end{align}
Consequently, by the definition of $\tilde{u}_\alpha$ and \eqref{e2.4}, we have, for $\alpha=1,\cdots,d$,
\begin{align}\label{v-alpha1}
|\nabla_{x'} u_\alpha(x)|&\leq\frac{C}{\sqrt{\delta(x)}},\qquad\forall~~x\in\Omega_{R},
\end{align}and
\begin{align}\label{v-alpha2}
 \frac{1}{C\delta(x)}\leq|\partial_{x_d} u_\alpha(x)|\leq\frac{C}{\delta(x)},\quad x\in\Omega_R.
\end{align}
\end{corollary}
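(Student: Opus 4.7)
\medskip

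\noindent\textbf{Proof proposal for Corollary \ref{corol3.4}.}

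The plan is to specialize the entire machinery developed for the general boundary value problem \eqref{eq1.1} (Lemmas \ref{lem2.1}--\ref{lem2.3}) to the particular boundary datum $\psi=\psi_\alpha$, and then read off the three claimed inequalities from the corresponding decomposition $u_\alpha=\tilde u_\alpha+w_\alpha$. The key simplification is that for $\alpha=1,2,\dots,d$, one has $\psi_\alpha=e_\alpha$, which is a constant vector on $\mathbb R^d$; in particular
\[
|\psi_\alpha(x',\varepsilon+h_1(x'))|\equiv 1,\qquad \|\nabla\psi_\alpha\|_{L^\infty}=\|\nabla^2\psi_\alpha\|_{L^\infty}=0,\qquad \|\psi_\alpha\|_{C^2(\partial D_1)}\le C.
\]
Moreover, on $\Omega_R$ the cut-off $\rho$ of \eqref{cutoff_function_rho} equals $1$, so the auxiliary function $\tilde v_\ell$ of \eqref{equ_tildeu} with $\psi=\psi_\alpha=e_\alpha$ reduces component-wise exactly to $\bar v\,\psi_\alpha$, which is the function $\tilde u_\alpha$ defined in the statement.

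First, I would apply Lemma \ref{lem2.3} directly with $\psi=\psi_\alpha$ and $w_\alpha:=u_\alpha-\tilde u_\alpha$. Inserting the above bounds into \eqref{equa2.9} yields
\[
|\nabla w_\alpha(x)|\le \frac{C}{\sqrt{\delta(x)}}+C\le \frac{C}{\sqrt{\delta(x)}},\qquad x\in\Omega_R,
\]
since $\delta(x)\le C$ on $\Omega_R$. This is exactly \eqref{cor3.4-1}. Nothing new is needed here: the estimates of $\tilde v$ in \eqref{eq1.7}--\eqref{eq1.10} simplify because all derivatives of $\psi_\alpha$ vanish, and the iteration in Lemma \ref{lem2.2} runs as before.

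Next, I would combine \eqref{cor3.4-1} with the pointwise bounds \eqref{e2.4} on $\nabla\bar v$ to deduce \eqref{v-alpha1} and \eqref{v-alpha2}. Writing $u_\alpha=w_\alpha+\bar v\,\psi_\alpha$ and using $\nabla\tilde u_\alpha=(\nabla\bar v)\,\psi_\alpha$, one has
\[
|\nabla_{x'}\tilde u_\alpha(x)|\le \frac{C|x'|}{\varepsilon+|x'|^2}\le \frac{C}{\sqrt{\varepsilon+|x'|^2}}\le \frac{C}{\sqrt{\delta(x)}},
\]
while $\frac{1}{C(\varepsilon+|x'|^2)}\le|\partial_{x_d}\tilde u_\alpha(x)|\le\frac{C}{\varepsilon+|x'|^2}$. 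A triangle inequality then immediately gives $|\nabla_{x'}u_\alpha|\le C/\sqrt{\delta(x)}$, which is \eqref{v-alpha1}, and the upper bound $|\partial_{x_d}u_\alpha|\le C/\delta(x)$ in \eqref{v-alpha2}.

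The only genuinely subtle point, and the place I would expect to spend most of the effort, is the lower bound $|\partial_{x_d}u_\alpha(x)|\ge 1/(C\delta(x))$. The inverse triangle inequality gives
\[
|\partial_{x_d}u_\alpha(x)|\ge |\partial_{x_d}\tilde u_\alpha(x)|-|\partial_{x_d}w_\alpha(x)|\ge \frac{1}{C_1\,\delta(x)}-\frac{C_2}{\sqrt{\delta(x)}},
\]
and the second term must be absorbed into half of the first. This is exactly where one uses that $\delta(x)\simeq \varepsilon+|x'|^2$ is small throughout $\Omega_R$ for $\varepsilon$ small and $R$ small but universal; choosing $R$ small enough (independent of $\varepsilon$) to force $C_1C_2\sqrt{\delta(x)}\le 1/2$ closes the argument and yields the claimed lower bound. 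The rest of the proof is a bookkeeping exercise already encoded in Lemmas \ref{lem2.1}--\ref{lem2.3}, so no new estimates are required.
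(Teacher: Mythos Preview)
Your proposal is correct and follows essentially the same route as the paper: specialize the machinery of Lemmas~\ref{lem2.1}--\ref{lem2.3} to $\psi=\psi_\alpha=e_\alpha$, observe that $\tilde v_\alpha=\bar v\,e_\alpha=\tilde u_\alpha$ on $\Omega_R$, read off \eqref{cor3.4-1} from \eqref{equa2.9}, and then combine with \eqref{e2.4} via the triangle inequality. Your explicit treatment of the lower bound in \eqref{v-alpha2}---absorbing $C/\sqrt{\delta(x)}$ into $1/(C\delta(x))$ using the smallness of $\delta$ on $\Omega_R$---is exactly the mechanism behind the paper's terse ``Consequently, by the definition of $\tilde u_\alpha$ and \eqref{e2.4}'' (and is the same absorption already implicit in the derivation of \eqref{estimate_vl}).
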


\begin{proof}
According to the definition of $\tilde{u}_\alpha$ and \eqref{e2.4}, we have
\begin{align*}
|\nabla_{x'}\tilde{u}_\alpha(x)|&\leq\frac{C|x'|}{\varepsilon+|x'|^2}, \quad x\in\Omega_{R},\\
~~\frac{1}{C(\varepsilon+|x'|^2)}&\leq|\partial_{x_{d}}\tilde{u}_\alpha(x)|\leq\frac{C}{\varepsilon+|x'|^2},~~x\in\Omega_R;
\end{align*}
and
\begin{align*}
|\mathcal{L}_{\lambda,\mu}\tilde{u}_\alpha(x)|\leq&\, C\sum_{k+l<2d}|\partial_{x_{k}x_{l}}\tilde{u}_\alpha(x)|\\
\leq&\left(\frac{C}{\varepsilon+|x'|^2}+\frac{C|x'|}{(\varepsilon+|x'|^2)^2}\right)|\psi^{l}(x',\varepsilon+h_{1}(x'))| ,~~x\in\Omega_R.
\end{align*}
Clearly, (\ref{cor3.4-1}) follows from the proof of Lemma \ref{lem2.1}--Lemma \ref{lem2.3}. 
\end{proof}

For problem (\ref{v-0}), we
decompose the solution $u_0$ as
$$u_0=u_{01}+u_{02}+\cdots+u_{0d},$$ where $u_{0l}$, $l=1,2,\cdots,d$, satisfy, respectively,
 \begin{align}\label{v01}
\begin{cases}
  \mathcal{L}_{\lambda,\mu}u_{0l}=0,\quad&
\hbox{in}\  \Omega,  \\
u_{0l}=0,\ &\hbox{on}\ \partial{D}_{1},\\
u_{0l}=(0,\cdots,0,\varphi^{l}(x)-\varphi^{l}(0),0,\cdots,0)^T, &\hbox{on} \ \partial{D}.
\end{cases}
\end{align}

Similar as \eqref{equ_tildeu}, we define  
\begin{align}\label{equ_tildeu0}
\tilde{u}_{0l}(x):=( 0,\cdots,0,&\big[\,\rho(x)(\varphi^{l}(x', h(x'))-\varphi^{l}(0))\nonumber\\
&+(1-\rho(x))(\varphi^{l}(x)-\varphi^{l}(0))\big](1-\bar{v})(x), 0,\cdots,0)^{T},\quad\forall~x\in\Omega.
\end{align}
where $\rho\in{C}^{2}(\overline{\Omega})$ is a cutoff function satisfying \eqref{cutoff_function_rho} as before. In particular,
$$\tilde{u}_{0l}=(0,\cdots,0,(\varphi^{l}(x', h(x'))-\varphi^{l}(0))(1-\bar{v})(x),0,\cdots,0)^T,\quad\forall~~x\in\Omega_{R}.$$ 
Adapting the proofs of Lemma \ref{lem2.1}--Lemma \ref{lem2.3} to the equation (\ref{v01}), we obtain the following corollary.

\begin{corollary}\label{corol3.5} For $l=1,2,\cdots,d$,
\begin{align}\label{step3-3}
|\nabla(u_{0l}-\tilde{u}_{0l})(x)|
\leq C\|\varphi^{l}\|_{C^2(\partial{D})}, \qquad~x\in\Omega_R.
\end{align}
Consequently,
\begin{align}\label{step3-4}
|\nabla_{x'}u_{0l}(x)|&\leq\,C\|\varphi^{l}\|_{C^2(\partial D)}, \quad~x\in\Omega_R,
\end{align}
and
\begin{align}\label{step3-5}
\frac{|\varphi^{l}(x',h(x'))-\varphi^{l}(0)|}{C\delta(x)}\leq|\partial_{x_{d}}u_{0l}(x)|\leq
\frac{C|\nabla_{x'}\varphi^{l}(0)||x'|}{\delta(x)}+C\|\varphi^{l}\|_{C^2(\partial D)},~x\in\Omega_R.
\end{align}
\end{corollary}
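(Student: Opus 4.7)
The strategy is to mimic the proof architecture of Lemma~\ref{lem2.1}--Lemma~\ref{lem2.3} used for $v_{l}$, but with the roles of $\partial D_{1}$ and $\partial D$ swapped. Set $w_{0l}:=u_{0l}-\tilde{u}_{0l}$; by construction of $\tilde{u}_{0l}$ in \eqref{equ_tildeu0} we have $w_{0l}=0$ on $\partial\Omega$, and $w_{0l}$ solves
\begin{align*}
\begin{cases}
\mathcal{L}_{\lambda,\mu}w_{0l}=-\mathcal{L}_{\lambda,\mu}\tilde{u}_{0l},&\hbox{in}\ \Omega,\\
w_{0l}=0,&\hbox{on}\ \partial\Omega.
\end{cases}
\end{align*}
The first step is to obtain pointwise bounds on $\nabla\tilde{u}_{0l}$ and $\mathcal{L}_{\lambda,\mu}\tilde{u}_{0l}$ in $\Omega_{R}$ analogous to \eqref{eq1.7}--\eqref{eq1.10} and \eqref{f}. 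The decisive new input, compared with Lemma~\ref{lem2.1}--Lemma~\ref{lem2.3}, is the Taylor expansion
$$\varphi^{l}(x',h(x'))-\varphi^{l}(0)=\nabla_{x'}\varphi^{l}(0)\cdot x'+O(|x'|^{2})\|\varphi^{l}\|_{C^{2}(\partial D)},$$
which uses $h(0')=0$ and $\nabla_{x'}h(0')=0$. Combined with the estimates \eqref{e2.4}--\eqref{ee2.4} for $1-\bar{v}$, this gives
$$|\partial_{x_{d}}\tilde{u}_{0l}(x)|\leq \frac{C|\nabla_{x'}\varphi^{l}(0)||x'|}{\varepsilon+|x'|^{2}}+C\|\varphi^{l}\|_{C^{2}(\partial D)},\qquad |\nabla_{x'}\tilde{u}_{0l}(x)|\leq C\|\varphi^{l}\|_{C^{2}(\partial D)},$$
and, crucially, $|\mathcal{L}_{\lambda,\mu}\tilde{u}_{0l}(x)|\leq C\|\varphi^{l}\|_{C^{2}(\partial D)}\bigl(\tfrac{1}{\varepsilon+|x'|^{2}}+\tfrac{|x'|}{(\varepsilon+|x'|^{2})^{2}}\cdot|x'|\bigr)\lesssim C\|\varphi^{l}\|_{C^{2}(\partial D)}\tfrac{1}{\varepsilon+|x'|^{2}}$ after absorbing the extra factor of $|x'|$ from the Taylor remainder.

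With these bounds in hand, I would run the global energy estimate exactly as in Lemma~\ref{lem2.1} (multiply the equation by $w_{0l}$, use the first Korn inequality, the Poincar\'e inequality on $\Omega\setminus\Omega_{R}$ and the Sobolev trace inequality on $\{|x'|=R\}\cap\Omega$) to obtain
$$\int_{\Omega}|\nabla w_{0l}|^{2}\,dx\leq C\|\varphi^{l}\|_{C^{2}(\partial D)}^{2}.$$
The only modification required is that the analogue of \eqref{nablax'tildeu} now uses the bound on $\nabla_{x'}\tilde{u}_{0l}$ above, which no longer contains a blow-up factor thanks to the vanishing of $\varphi^{l}(x',h(x'))-\varphi^{l}(0)$ at $x'=0'$.

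Next I would repeat the Caccioppoli/iteration argument of Lemma~\ref{lem2.2} on the cylinders $\widehat{\Omega}_{\delta}(z)$. The key input there is a bound on $\int_{\widehat{\Omega}_{s}(z)}|\mathcal{L}_{\lambda,\mu}\tilde{u}_{0l}|^{2}\,dx$; the above pointwise bound on $\mathcal{L}_{\lambda,\mu}\tilde{u}_{0l}$ yields the analog of \eqref{ff} with $|\psi^{l}(z',\varepsilon+h_{1}(z'))|$ replaced by $C\|\varphi^{l}\|_{C^{2}(\partial D)}\delta(z)^{1/2}$-type quantities, and in the two regimes $|z'|\leq\sqrt{\varepsilon}$ and $\sqrt{\varepsilon}\leq|z'|<R$ the iteration produces $\|\nabla w_{0l}\|_{L^{2}(\widehat{\Omega}_{\delta}(z))}^{2}\leq C\delta(z)^{d+2}\|\varphi^{l}\|_{C^{2}(\partial D)}^{2}$.

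Finally, the scaling rescaling $x'-z'=\delta y'$, $x_{d}=\delta y_{d}$ of Lemma~\ref{lem2.3}, together with the Sobolev embedding and $W^{2,p}$ estimates for the Lam\'e system on the regularized unit cell $Q_{1}$, yields
$$\|\nabla w_{0l}\|_{L^{\infty}(\widehat{\Omega}_{\delta/2}(z))}\leq C\|\varphi^{l}\|_{C^{2}(\partial D)},$$
which is exactly \eqref{step3-3}. The two consequences \eqref{step3-4} and \eqref{step3-5} are then obtained from $\nabla u_{0l}=\nabla w_{0l}+\nabla\tilde{u}_{0l}$ together with the direct pointwise bounds on $\nabla\tilde{u}_{0l}$ established in the first step. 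I expect the main technical obstacle to be the careful bookkeeping of the Taylor remainder $|x'|^{2}\|\varphi^{l}\|_{C^{2}}$ throughout the iteration, so that the extra factor of $|x'|$ (compared to the setting of Lemma~\ref{lem2.2}) persists through the scaling and produces the sharp $\frac{|\nabla_{x'}\varphi^{l}(0)||x'|}{\delta(x)}+C\|\varphi^{l}\|_{C^{2}(\partial D)}$ bound in \eqref{step3-5} rather than a weaker $\frac{C\|\varphi^{l}\|_{C^{2}}}{\delta(x)^{1/2}}$ type bound.
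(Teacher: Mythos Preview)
Your proposal is correct and follows essentially the same route as the paper: the paper's proof also computes the pointwise bounds on $\nabla\tilde{u}_{0l}$ via the Taylor expansion \eqref{taylor} of $\varphi^{l}(x',h(x'))-\varphi^{l}(0)$, then says ``adapting the proof of Lemma~\ref{lem2.1}--Lemma~\ref{lem2.2}'' to obtain $|\nabla(u_{0l}-\tilde{u}_{0l})|\leq \frac{C|\nabla_{x'}\varphi^{l}(0)||x'|}{\sqrt{\varepsilon+|x'|^{2}}}+C\|\varphi^{l}\|_{C^{2}}\leq C\|\varphi^{l}\|_{C^{2}}$, and deduces \eqref{step3-4}--\eqref{step3-5} exactly as you describe. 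Your write-up is in fact more detailed than the paper's, which leaves the energy/iteration/rescaling steps implicit.
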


\begin{proof} 
For (\ref{v01}), it is clear from \eqref{equ_tildeu0} that $\tilde{u}_{0l}=u_{0l}=0$ on $\partial{D}_{1}$, $\tilde{u}_{0l}=u_{0l}$ on $\partial{D}$. Note that $\tilde{u}_{0l}^{k}=0$, if $k\neq\,l$, and for $x\in\Omega_{R}$,
\begin{align*}
\nabla_{x'}\tilde{u}_{0l}^{l}=&\,-\left(\varphi^{l}(x', h(x'))-\varphi^{l}(0)\right)\nabla_{x'}\bar{v}(x)\\
&+\Big[\nabla_{x'}\varphi^{l}(x',h(x'))+\partial_{x_{d}}\varphi^{l}(x',h(x'))\nabla_{x'}h(x')\Big](1-\bar{v})(x),\\
\partial_{x_d}\tilde{u}_{0l}^{l}=&\,-\left(\varphi^{l}(x', h(x'))-\varphi^{l}(0)\right)\partial_{x_{d}}\bar{v}(x).
\end{align*}
By the Taylor expansion and (\ref{h2})-(\ref{h1}), 
\begin{align}\label{taylor}
\varphi^{l}(x',h(x'))=&\,\varphi^{l}(0)+\nabla_{x'}\varphi^{l}(0)x'\nonumber\\
&+\frac{1}{2}{x'}^T\Big[\nabla^2_{x'}\varphi^{l}(0)+\partial_{x_{d}}\varphi^{l}(0)\nabla^2_{x'}h(0')\Big]x'+O(|x'|^{2+\gamma}).
\end{align}
Hence, using (\ref{e2.4}), we have
\begin{align}\label{step3-1}
|\nabla_{x'}\tilde{u}_{0l}(x)|&\leq\frac{C|\nabla_{x'}\varphi^{l}(0)||x'|^2}{\varepsilon+|x'|^2}+C\|\varphi^{l}\|_{C^2(\partial D)}\leq\,C\|\varphi^{l}\|_{C^2(\partial D)}, ~x\in\Omega_R,
\end{align}
and
\begin{align}\label{step3-2}
\frac{|\varphi^{l}(x',h(x'))-\varphi^{l}(0)|}{C(\varepsilon+|x'|^2)}\leq|\partial_{x_{d}}\tilde{u}_{0l}(x)|\leq
\frac{C|\nabla_{x'}\varphi^l(0)||x'|}{\varepsilon+|x'|^2}+C\|\varphi^{l}\|_{C^2(\partial D)},~x\in\Omega_R.
\end{align}

Adapting the proof of Lemma \ref{lem2.1}--Lemma \ref{lem2.2} and using (\ref{taylor}), we obtain
\begin{align*}
|\nabla(u_{0l}-\tilde{u}_{0l})(x)|\leq&
\frac{C|\nabla_{x'}\varphi^{l}(0)||x'|}{\sqrt{\varepsilon+|x'|^2}}+C\|\varphi^{l}\|_{C^2(\partial{D})}
\leq\,C\|\varphi^{l}\|_{C^2(\partial{D})}, \quad~x\in\Omega_R,
\end{align*}
which, together with (\ref{step3-1}) and (\ref{step3-2}), implies that (\ref{step3-4}) and (\ref{step3-5}).
\end{proof}

\vspace{1cm}

\section{Proof of Proposition \ref{prop1} and estimates of $C^\alpha$}\label{sec_4}

In this Section, we are devoted to prove Proposition \ref{prop1} under the normalization $\|\varphi\|_{C^2(\partial{D}; \mathbb{R}^{d})}=1$. 

Denote
\begin{align*}
a_{\alpha\beta}:=-\int_{\partial{D}_{1}}\frac{\partial u_\alpha}{\partial \nu_0}\large\Big|_{+}\cdot\psi_{\beta},\quad b_{\beta}:=\int_{\partial{D}_{1}}\frac{\partial u_0}{\partial \nu_0}\large\Big|_{+}\cdot\psi_{\beta},\quad \alpha, \beta=1,2,\cdots, \frac{d(d+1)}{2}.
\end{align*}
Multiplying the first line of (\ref{v-123}) and (\ref{v-0}), by $u_\beta$, respectively, and applying integration by parts over $\Omega$ leads to
$$a_{\alpha\beta}=\int_{\Omega}(\mathbb{C}^0e(u_\alpha), e(u_\beta))dx,\quad b_{\beta}=-\int_{\Omega}(\mathbb{C}^0e(u_0), e(u_\beta))dx.$$
By \eqref{decomposition_nablau} and the linearity of $e(u)$,
\begin{align*}
e(u)=\sum_{\alpha=1}^{d}(C^{\alpha}-\varphi^{\alpha}(0))e(u_\alpha)+\sum_{\alpha=d+1}^{\frac{d(d+1)}{2}}C^{\alpha}e(u_{\alpha})+e(u_0),\quad\hbox{in}\ \Omega.
\end{align*}
Then, it follows from the forth line of (\ref{main}) that for $\beta=1,2,\cdots,\frac{d(d+1)}{2}$,
\begin{align}\label{eq5.2}
\sum_{\alpha=1}^{d}(C^{\alpha}-\varphi^{\alpha}(0))a_{\alpha\beta}+\sum_{\alpha=d+1}^{\frac{d(d+1)}{2}}C^{\alpha}a_{\alpha\beta}
=b_{\beta}.
\end{align}

Denote  
$$X^1=(C^1-\varphi^1(0),\cdots,C^d-\varphi^d(0))^T,\quad\, X^2=(C^{d+1},\cdots,C^{\frac{d(d+1)}{2}})^T,$$
$$P^1=(b_1,\cdots, b_d)^T,\quad\, P^2=(b_{d+1},\cdots, b_{\frac{d(d+1)}{2}})^T,$$ 
and
\begin{gather*}A=\begin{pmatrix} a_{11}&\cdots&a_{1d} \\  \vdots&\ddots&\vdots\\ a_{d1}&\cdots&a_{dd}\end{pmatrix},\quad
B=\begin{pmatrix} a_{1~d+1}&\cdots&a_{1~\frac{d(d+1)}{2}}\\ \vdots&\ddots&\vdots\\ a_{d~d+1}&\cdots&a_{d~\frac{d(d+1)}{2}} \end{pmatrix}, \\
 D=\begin{pmatrix} a_{d+1~d+1}&\cdots&a_{d+1~\frac{d(d+1)}{2}}\\ \vdots&\ddots&\vdots\\ a_{\frac{d(d+1)}{2}~d+1}&\cdots&a_{\frac{d(d+1)}{2}~\frac{d(d+1)}{2}} \end{pmatrix}.
\end{gather*}
Thus, by using the symmetry property of $a_{\alpha\beta}$, \eqref{eq5.2} can be rewritten as
\begin{gather}\label{eq5.3}\begin{pmatrix}A&B\\B^T&D\end{pmatrix}
\begin{pmatrix}X^1\\X^2\end{pmatrix}=\begin{pmatrix}P^1\\P^2\end{pmatrix}
\end{gather}

\begin{lemma}\label{lemma4.1}
There exists a positive universal constant $C$, independent of $\varepsilon$, such that
\begin{align}\label{lemma4.1-1}
\sum_{\alpha,\beta=1}^{\frac{d(d+1)}{2}}a_{\alpha\beta}\xi_\alpha\xi_\beta\geq\frac{1}{C},\quad\forall~ \xi\in \mathbb{R}^{\frac{d(d+1)}{2}},~ |\xi|=1.
\end{align}
\end{lemma}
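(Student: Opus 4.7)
The plan is to exhibit the quadratic form $\sum a_{\alpha\beta}\xi_\alpha\xi_\beta$ as an energy integral, then bound it below by a fixed positive multiple of $|\xi|^2$ using an extension across $D_1$ and finite-dimensional norm equivalence on $\Psi$.

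First, for $\xi\in\mathbb{R}^{d(d+1)/2}$ with $|\xi|=1$, set $u_\xi:=\sum_\alpha\xi_\alpha u_\alpha$. By the bilinearity of the strain tensor and the integral representation $a_{\alpha\beta}=\int_\Omega(\mathbb{C}^0 e(u_\alpha),e(u_\beta))\,dx$ derived just above Lemma~\ref{lemma4.1}, we have
\begin{align*}
\sum_{\alpha,\beta=1}^{d(d+1)/2}a_{\alpha\beta}\xi_\alpha\xi_\beta=\int_\Omega(\mathbb{C}^0 e(u_\xi),e(u_\xi))\,dx\ \geq\ \delta_0\int_\Omega|e(u_\xi)|^2\,dx,
\end{align*}
by the ellipticity inequality \eqref{ellip} and \eqref{delta}. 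It therefore suffices to bound $\int_\Omega|e(u_\xi)|^2\,dx$ from below by a positive universal constant.

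The key step is to move the problem to the $\varepsilon$-independent domain $D$. Since $u_\xi=\psi_\xi:=\sum_\alpha\xi_\alpha\psi_\alpha$ on $\partial D_1$ and $u_\xi=0$ on $\partial D$, the extension
\begin{align*}
\tilde u_\xi(x):=\begin{cases}u_\xi(x),&x\in\Omega,\\ \psi_\xi(x),&x\in D_1,\end{cases}
\end{align*}
belongs to $H^1_0(D;\mathbb{R}^d)$. Because $\psi_\xi\in\Psi$ is a rigid displacement, $e(\psi_\xi)\equiv 0$ in $D_1$, so
\begin{align*}
\int_\Omega|e(u_\xi)|^2\,dx=\int_D|e(\tilde u_\xi)|^2\,dx.
\end{align*}
On $H^1_0(D;\mathbb{R}^d)$, Korn's first inequality together with the Poincar\'e inequality (both with constants depending only on $D$, hence universal) gives
\begin{align*}
\|\tilde u_\xi\|_{H^1(D)}^2\ \leq\ C\int_D|e(\tilde u_\xi)|^2\,dx\ =\ C\int_\Omega|e(u_\xi)|^2\,dx.
\end{align*}

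It remains to show $\|\tilde u_\xi\|_{H^1(D)}\geq c>0$ for $|\xi|=1$, with $c$ universal. Here I use the assumption (see \eqref{Domain}) that $D_1$ contains a ball $B_{r_0^*}(q)$ of radius $r_0^*>0$, independent of $\varepsilon$. Since $\tilde u_\xi=\psi_\xi$ on $D_1$,
\begin{align*}
\|\tilde u_\xi\|_{H^1(D)}^2\ \geq\ \|\psi_\xi\|_{H^1(B_{r_0^*}(q))}^2.
\end{align*}
The map $\xi\mapsto\psi_\xi$ is a linear isomorphism from $\mathbb{R}^{d(d+1)/2}$ onto the finite-dimensional space $\Psi$, and for any ball of radius $r_0^*$ inside the bounded set $D$, the seminorm $\|\cdot\|_{H^1(B_{r_0^*}(q))}$ restricted to $\Psi$ is a norm (a nonzero rigid displacement never vanishes on an open set). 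By a compactness argument over $q$ ranging in the compact set $\overline{\{q\in D:B_{r_0^*}(q)\subset D\}}$ (or by an explicit change of variables $y=x-q$, which reduces $\psi_\xi$ to $(a+Bq)+By$ on $B_{r_0^*}(0)$ and then applying norm equivalence on the finite-dimensional parameter space), one obtains a universal $c_0>0$ with $\|\psi_\xi\|_{H^1(B_{r_0^*}(q))}\geq c_0|\xi|$. Combining the three displays yields \eqref{lemma4.1-1}.

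The main obstacle is keeping all constants independent of $\varepsilon$: naively, Korn's inequality on the thin domain $\Omega=\Omega_\varepsilon$ would degenerate as $\varepsilon\to 0$. The trick that avoids this is precisely the extension by the rigid displacement across $D_1$, which allows one to work on the fixed domain $D$ where Korn and Poincar\'e have $\varepsilon$-independent constants; the final lower bound on $\|\psi_\xi\|_{H^1}$ is then pure finite-dimensional linear algebra.
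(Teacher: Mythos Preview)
Your proof is correct, and it takes a genuinely different route from the paper's argument. The paper argues by contradiction: assuming sequences $\varepsilon_i\to 0$ and $|\xi^i|=1$ with vanishing energy, it uses the second Korn inequality on $\Omega_{\varepsilon_i}\setminus B_{\bar r}$ to obtain a weak $H^1$ limit $\bar u$ on the limiting domain $\Omega^*$, shows $e(\bar u)=0$ there so $\bar u\in\Psi$, and then invokes the linear independence of $\{\psi_\alpha\}$ on $\partial D$ and on $\partial D_1^*$ (via Lemma~6.1 of \cite{BLL2}) to force $\bar\xi=0$, a contradiction. Your argument is direct rather than by compactness: the extension of $u_\xi$ by the rigid motion $\psi_\xi$ across $D_1$ lands in $H^1_0(D)$, so Korn's first inequality and Poincar\'e apply on the \emph{fixed} domain $D$ with $\varepsilon$-independent constants; the remaining lower bound $\|\psi_\xi\|_{H^1(B_{r_0^*}(q))}\geq c_0|\xi|$ is just finite-dimensional norm equivalence, made uniform in the center $q$ by the change of variables $y=x-q$ (using that $|a|^2+|B|^2\leq C(|a+Bq|^2+|B|^2)$ since $|q|\leq\mathrm{diam}\,D$). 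Your approach is more elementary---it avoids weak convergence, subsequence extraction, and the external linear-independence lemma---and yields a constructive constant; the paper's compactness scheme, while heavier here, is the template they reuse elsewhere (e.g., in the proof of \eqref{C3}).
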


\begin{proof}
To emphasize the dependence on $\varepsilon$, we use $\Omega_\varepsilon:=D\setminus\overline{D_1}$ and $u_{\alpha}^{\varepsilon}$ to denote the corresponding solution of (\ref{v-123}) with $\alpha=1,\cdots, d$. For $\xi\in \mathbb{R}^{\frac{d(d+1)}{2}}$ with $|\xi|=1$, using (\ref{delta}), we have
\begin{align*}
\sum_{\alpha,\beta=1}^{\frac{d(d+1)}{2}}a_{\alpha\beta}\xi_\alpha\xi_\beta&=\int_{\Omega}\left(\mathbb{C}^0e\Big(\sum_{\alpha=1}^{\frac{d(d+1)}{2}}\xi_\alpha u_\alpha^{\varepsilon}\Big), e\Big(\sum_{\alpha=1}^{\frac{d(d+1)}{2}}\xi_\beta u_\beta^{\varepsilon}\Big)\right)dx\\
&\geq\frac{1}{C}\int_\Omega\left|e\Big(\sum_{\alpha=1}^{\frac{d(d+1)}{2}}\xi_\alpha u_\alpha^{\varepsilon}\Big)\right|^2dx.
\end{align*}
We claim that there exists a constant $C>0$, independent of $\varepsilon$, such that
\begin{align*}
\int_\Omega\left|e\Big(\sum_{\alpha=1}^{\frac{d(d+1)}{2}}\xi_\alpha u_\alpha^{\varepsilon}\Big)\right|^2dx\geq\frac{1}{C},\quad\forall\ \xi\in \mathbb{R}^{\frac{d(d+1)}{2}},~|\xi|=1.
\end{align*}
Indeed, if not, then there exist $\varepsilon_i\rightarrow0^+$, $|\xi^i|=1$, such that
\begin{align} \label{case-3}
\int_\Omega\left|e\Big(\sum_{\alpha=1}^{\frac{d(d+1)}{2}}\xi^i_\alpha u_\alpha^{\varepsilon_{i}}\Big)\right|^2dx\rightarrow 0,\quad\mbox{as}~i\to\infty.
\end{align}

Here and in the following proof, we use the notations $D_1^*:=\{~x\in\mathbb{R}^{d} ~|~x+(0',\varepsilon)\in\,D_1~\}$, $\Omega^*:=D\setminus\overline{D_1^*} $. Since $u_{\alpha}^{\varepsilon_{i}}=0$ on $\partial D$, it follows from the second Korn's inequality (see theorem 2.5 in \cite{O}) that there exists a constant $C$, independent of $\varepsilon_{i}$, such that
\begin{align*}
\|u_{\alpha}^{\varepsilon_{i}}\|_{H^1(\Omega_\varepsilon\setminus B_{\bar{r}};\mathbb{R}^{d})}\leq C,
\end{align*}
where $\bar{r}>0$ is fixed. Then there exists a subsequence, we still denote $\{u_{\alpha}^{\varepsilon_{i}}\}$, such that
\begin{align*}
u_{\alpha}^{\varepsilon_{i}}\rightharpoonup \bar{u}_\alpha,\quad \hbox{in}\ H^1(\Omega_\varepsilon\setminus B_{\bar{r}};\mathbb{R}^{d}),\quad\hbox{as}~~\, i\rightarrow\infty.
\end{align*}
By (\ref{case-3}), there exists $\bar{\xi}$ such that
\begin{align*}
\xi^i\rightarrow\bar{\xi},\quad\hbox{as}\ i\rightarrow\infty,\quad\hbox{with}\ |\bar{\xi}|=1,
\end{align*}
and
\begin{align*}
\int_{\Omega^*}\left|e\Big(\sum_{\alpha=1}^{\frac{d(d+1)}{2}}\bar{\xi}_\alpha \bar{u}_\alpha\Big)\right|^2dx=0.
\end{align*}
This implies that
\begin{align*}
e\Big(\sum_{\alpha=1}^{\frac{d(d+1)}{2}}\bar{\xi}_\alpha \bar{u}_\alpha\Big)=0,\quad\hbox{in}\ \Omega^*.
\end{align*}
That means that $\sum_{\alpha=1}^{\frac{d(d+1)}{2}}\bar{\xi}_\alpha \bar{u}_\alpha\in\Psi$ in $\Omega^*$. Hence, there exist some constants $c_\beta$, $\beta=1,2,\cdots, \frac{d(d+1)}{2}$, such that
\begin{align*}
\sum_{\alpha=1}^{\frac{d(d+1)}{2}}\bar{\xi}_\alpha \bar{u}_\alpha=\sum_{\beta=1}^{\frac{d(d+1)}{2}}c_\beta\psi_\beta,\quad\hbox{in}\ \Omega^*.
\end{align*}
Since $\sum_{\beta=1}^{\frac{d(d+1)}{2}}c_\beta\psi_\beta=0$ on $\partial D$, it follows from lemma 6.1 in \cite{BLL2} that $c_\beta=0,\ \beta=1,\cdots, ^{\frac{d(d+1)}{2}}.$
Thus,
\begin{align*}
\sum_{\alpha=1}^{\frac{d(d+1)}{2}}\bar{\xi}_\alpha \bar{u}_\alpha=0,\quad\hbox{in}\ \Omega^*.
\end{align*}
Restricted on $\partial D_1^*$, it says that $\sum_{\alpha=1}^{\frac{d(d+1)}{2}}\bar{\xi}_\alpha \psi_\alpha=0$ on $\partial D_1^*$. This yields, using again lemma 6.1 in \cite{BLL2}, $\bar{\xi}_\alpha=0, \alpha=1,\cdots, d$, which contradicts with $|\bar{\xi}|=1$. 
\end{proof}

\begin{lemma}\label{lemma4.2} For $d\geq2$, we have
\begin{align}
\frac{1}{C\rho_d(\varepsilon)}\leq a_{\alpha\alpha}&\leq \frac{C}{\rho_d(\varepsilon)},\quad \alpha=1, \cdots, d;\label{lem5.8-1}\\
\frac{1}{C}\leq a_{\alpha\alpha}&\leq C,\quad \alpha=d+1, \cdots, \frac{d(d+1)}{2};\label{lem5.8-2}\\
a_{\alpha\beta}&\leq C,\quad \alpha=1,2,\cdots,\frac{d(d+1)}{2},~\beta=d+1, \cdots, \frac{d(d+1)}{2}, \alpha\neq\beta;\label{lem5.8-3}
\end{align}
and if $d=2$, then \begin{align}\label{d=2}
|a_{12}|=|a_{21}|&\leq C|\log\varepsilon|;
\end{align}
if $d\geq3$, then \begin{align}\label{dgeq3}
|a_{\alpha \beta}|=|a_{\beta\alpha}|&\leq C,\quad \alpha,\beta=1,\cdots, d,~\alpha\neq\beta.
\end{align}

Consequently, 
\begin{align}\label{det-a4}
\frac{1}{C(\rho_d(\varepsilon))^d}\leq\det A\leq \frac{C}{(\rho_d(\varepsilon))^d},\quad \frac{1}{C}I\leq  D\leq CI.
\end{align}
\end{lemma}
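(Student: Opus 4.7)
\emph{Plan.} I would start from the energy representation
\[
a_{\alpha\beta}=\int_\Omega(\mathbb{C}^0 e(u_\alpha),e(u_\beta))\,dx,
\]
obtained by multiplying the equation for $u_\alpha$ in \eqref{v-123} by $u_\beta$ and integrating by parts. Splitting $\Omega=\Omega_R\cup(\Omega\setminus\Omega_R)$, standard interior and boundary Schauder estimates give $\|\nabla u_\alpha\|_{L^\infty(\Omega\setminus\Omega_R)}\leq C$ for every $\alpha$, so the outer piece contributes at most a universal constant to every $a_{\alpha\beta}$. All $\varepsilon$-dependence is therefore extracted from $\Omega_R$ via the refined pointwise estimates of Corollary~\ref{corol3.4} together with \eqref{v3} for the rotational indices.

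For the diagonal entries with $\alpha=1,\dots,d$ I would square \eqref{v-alpha1}--\eqref{v-alpha2} to get $|\nabla u_\alpha|^2\leq C/\delta(x)^2$ on $\Omega_R$ and compute
\[
\int_{\Omega_R}\frac{dx}{\delta(x)^2}=\int_{|x'|<R}\frac{dx'}{\delta(x')}\sim\int_0^R\frac{r^{d-2}\,dr}{\varepsilon+r^2}\sim\frac{1}{\rho_d(\varepsilon)},
\]
which yields the upper half of \eqref{lem5.8-1}. For the matching lower bound I would use the decomposition $u_\alpha=\tilde u_\alpha+w_\alpha$ from Section~\ref{sec_3}: because $\tilde u_\alpha=\bar v\,e_\alpha$, the $\alpha$-th component of $\partial_{x_d}u_\alpha$ inherits $|\partial_{x_d}\bar v|\geq 1/(C\delta)$ from \eqref{e2.4} while the correction $w_\alpha$ only contributes $O(1/\sqrt\delta)$ by Lemma~\ref{lem2.3}. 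For small $\varepsilon$ and $|x'|$ this forces $|e(u_\alpha)_{\alpha d}|\geq c/\delta$ (or $|e(u_d)_{dd}|\geq c/\delta$ when $\alpha=d$), and the same integration from below recovers $a_{\alpha\alpha}\geq 1/(C\rho_d(\varepsilon))$. The case $\alpha\geq d+1$ in \eqref{lem5.8-2} is much softer: \eqref{v3} gives $|\nabla u_\alpha|\leq C(\varepsilon+|x'|)/(\varepsilon+|x'|^2)+C$, whose $L^2$-integral is bounded uniformly in $\varepsilon$, and the matching lower bound $a_{\alpha\alpha}\geq 1/C$ is immediate from Lemma~\ref{lemma4.1} with $\xi=e_\alpha$.

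The off-diagonal estimates \eqref{lem5.8-3}--\eqref{dgeq3} are the principal obstacle, because the naive Cauchy--Schwarz bound $|a_{\alpha\beta}|\leq\sqrt{a_{\alpha\alpha}a_{\beta\beta}}$ only produces $C/\sqrt{\rho_d(\varepsilon)}$, which is too weak in $d=2,3$. I would exploit the component structure: for $\alpha\in\{1,\dots,d\}$ only the $\alpha$-th slot of $\partial_{x_d}u_\alpha$ has size $1/\delta$, while the remaining slots are $O(1/\sqrt\delta)$ by the Lemma~\ref{lem2.3} estimate on $w_\alpha$. When $\alpha\neq\beta$ in $\{1,\dots,d\}$ the large entries of $e(u_\alpha)$ and $e(u_\beta)$ sit in disjoint positions, so every term in $e(u_\alpha)\!:\!e(u_\beta)$ pairs a $1/\delta$ factor with a $1/\sqrt\delta$ factor, improving the integrand to $C/\delta^{3/2}$. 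This gives
\[
\int_{\Omega_R}\frac{dx}{\delta^{3/2}}\sim\int_0^R\frac{r^{d-2}\,dr}{\sqrt{\varepsilon+r^2}},
\]
which is $O(|\log\varepsilon|)$ for $d=2$ (yielding \eqref{d=2}) and $O(1)$ for $d\geq 3$ (yielding \eqref{dgeq3}). The mixed case $\alpha\leq d$, $\beta\geq d+1$ of \eqref{lem5.8-3} requires an extra observation: since $\psi_\beta=x_je_k-x_ke_j$ satisfies $|\psi_\beta(x',\varepsilon+h_1(x'))|\leq C(\varepsilon+|x'|)$, the dominant entries of $e(u_\beta)$ are of size $(\varepsilon+|x'|)/\delta$ rather than $1/\delta$ by \eqref{mainestimate}; the resulting integrand $\leq C(\varepsilon+|x'|)/\delta^2$ integrates to a uniform bound in every dimension, once one splits on whether the rotation index has $k=d$ or $k<d$. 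The remaining $\alpha,\beta\geq d+1$ piece of \eqref{lem5.8-3} is a straightforward Cauchy--Schwarz consequence of \eqref{lem5.8-2}.

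For the determinant bound \eqref{det-a4} I would expand $\det A$ by cofactors. In $d=2$, $\det A=a_{11}a_{22}-a_{12}^2$; the diagonal product satisfies $a_{11}a_{22}\sim 1/\varepsilon$ and dominates $a_{12}^2\leq C\log^2\varepsilon=o(1/\varepsilon)$, so $\det A\sim 1/\varepsilon$. In $d=3$ the leading product $a_{11}a_{22}a_{33}\sim|\log\varepsilon|^3$ dominates the remaining terms of the $3\times 3$ expansion, each of which contains at most one of the large diagonal factors and is therefore at most $O(|\log\varepsilon|)$. In $d\geq 4$ all entries of $A$ are uniformly bounded above by \eqref{lem5.8-1}--\eqref{dgeq3}, while Lemma~\ref{lemma4.1} supplies a uniform positive-definite lower bound, giving $\det A\sim 1=1/\rho_d^d$. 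Finally the block bound $I/C\leq D\leq CI$ follows from \eqref{lem5.8-2}--\eqref{lem5.8-3} (upper side) together with Lemma~\ref{lemma4.1} restricted to $\xi\in\mathrm{span}\{e_{d+1},\dots,e_{d(d+1)/2}\}$ (lower side).
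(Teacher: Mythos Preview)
Your plan is sound and largely parallels the paper's argument: both isolate the $\varepsilon$-dependence in $\Omega_R$ via the pointwise estimates of Section~\ref{sec_3}, use Lemma~\ref{lemma4.1} for the uniform lower bounds, and deduce \eqref{det-a4} by cofactor expansion. The chief methodological difference is that for the off-diagonal entries the paper passes to the \emph{boundary} representation $a_{\alpha\beta}=-\int_{\partial D_1}\frac{\partial u_\alpha}{\partial\nu_0}\big|_+\!\cdot\psi_\beta$ and exploits the smallness $|n_k|\le C|x'|$ for $k<d$, whereas you remain with the volume integral and rely on the component-wise location of the large entries of $e(u_\alpha)$. For \eqref{d=2}--\eqref{dgeq3} your disjoint-position argument is a clean alternative and gives exactly the paper's bounds. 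For the diagonal lower bound the paper's device is slightly slicker: since $\bar v(x',\cdot)$ is linear in $x_d$, it minimizes $\int|\partial_{x_d}\cdot|^2\,dx_d$ among all competitors with the same boundary values, so $\int_{\Omega_R}|\partial_{x_d}u_\alpha^\alpha|^2\ge\int_{\Omega_R}|\partial_{x_d}\bar v|^2$ directly, with no need to restrict to small $|x'|$.

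There is one genuine gap, in the mixed block \eqref{lem5.8-3} when $d=2$. Your integrand bound $C(\varepsilon+|x'|)/\delta^2$ yields
\[
\int_{\Omega_R}\frac{\varepsilon+|x'|}{\delta^2}\,dx\ \sim\ \int_0^R\frac{(\varepsilon+r)\,r^{d-2}}{\varepsilon+r^2}\,dr,
\]
which is $O(1)$ for $d\ge 3$ but only $O(|\log\varepsilon|)$ for $d=2$, because of the piece $\int_0^R r/(\varepsilon+r^2)\,dr$. The proposed split on whether the rotation has $k=d$ or $k<d$ does not rescue this: in $d=2$ the sole rotation $\psi_3=(x_2,-x_1)$ has $k=d$, and the dominant entry of $e(u_3)$ lives at position $(d,d)$ with size $|x_1|/\delta$, \emph{coinciding} with the dominant entry of $e(u_2)$; their product $|x_1|/\delta^2$ still integrates to $|\log\varepsilon|$. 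To close the estimate you must use one more fact: the leading contribution is \emph{odd} in $x_1$ (since $\partial_{x_d}\bar v=1/\delta(x')$ is even up to $O(|x'|^{2+\gamma})$ by \eqref{h2}--\eqref{h3}, while the factor $x_1/\delta$ is odd), so it cancels upon integration and the remainder is uniformly bounded. The paper's boundary-integral route makes this smallness more transparent, because on $\partial D_1\cap B_R$ one of the components of $\psi_\beta$ equals $x_d=\varepsilon+h_1(x')\sim\delta$, which kills the $1/\delta$ singularity outright; you should either import that argument or make the parity cancellation explicit.
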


\begin{proof}
{\bf STEP 1.} Proof of (\ref{lem5.8-1}).
In view of  (\ref{ellip}), (\ref{v-alpha1}) and (\ref{v-alpha2}), we have, for $\alpha=1, \cdots, d$,  
\begin{align*}
a_{\alpha\alpha}&=\int_{\Omega}(\mathbb{C}^0e(u_\alpha), e(u_\alpha))dx\leq C\int_{\Omega}|\nabla u_\alpha|^2dx\\
&\leq C\int_{\Omega_R}\frac{dx}{(\varepsilon+|x'|^2)^2}+C\\
&\leq C\int_{|x'|<R}\frac{dx'}{\varepsilon+|x'|^2}+C\\
&=C\int_0^R\frac{\rho^{d-2}}{\varepsilon+\rho^2}d\rho+C
\leq \frac{C}{\rho_d(\varepsilon)},
\end{align*}
and
\begin{align*}
a_{\alpha\alpha}&=\int_{\Omega}(\mathbb{C}^0e(u_\alpha), e(u_\alpha))dx\geq \frac{1}{C}\int_{\Omega}|e(u_\alpha)|^2dx\\
&\geq\frac{1}{C}\int_{\Omega}|e_{\alpha d}(u_\alpha)|^2dx\geq \frac{1}{C}\int_{\Omega_R}|\partial_{x_d}u_\alpha^\alpha|^2dx.
\end{align*}
Notice that $u_\alpha^\alpha|_{\partial{D}_{1}}=\bar{v}|_{\partial{D}_{1}}=1, u_\alpha^\alpha|_{\partial D}=\bar{v}|_{\partial D}=0$, and recalling the definition of $\bar{v}$, $\bar{v}(x',x_d)$ is linear in $x_d$ for fixed $x'$, so $\bar{v}(x', \cdot)$ is harmonic, hence its energy is minimal, that is
\begin{align*}
\int_{h(x')}^{h_1(x')+{\varepsilon}}|\partial_{x_d}u_\alpha^\alpha|^2dx_d\geq
\int_{h(x')}^{h_1(x')+{\varepsilon}}|\partial_{x_d}\bar{v}|^2dx_d=\frac{1}{\varepsilon+h_1(x')-h(x')}.
\end{align*}
Integrating on $B_R(0')$ for $x'$, we obtain
\begin{align*}
\int_{\Omega_R}|\partial_{x_d}u_\alpha^\alpha|^2dx&=\int_{|x'|<R}\int_{h(x')}^{h_1(x')+\varepsilon}|\partial_{x_d}u_\alpha^\alpha|^2dx_ddx'\\
&\geq\frac{1}{C}\int_{|x'|<R}\frac{dx'}{\varepsilon+|x'|^2}\geq\frac{1}{C\rho_d(\varepsilon)}.
\end{align*}
Estimate (\ref{lem5.8-1}) is proved.

{\bf STEP 2.} Proof of (\ref{lem5.8-2}) and (\ref{lem5.8-3}). By means of (\ref{v3}), for $\alpha, \beta=d+1, \cdots, \frac{d(d+1)}{2}$, we have
\begin{align*}
a_{\alpha\beta}&=\int_{\Omega}(\mathbb{C}^0e(u_\alpha), e(u_\beta))dx\leq C\int_{\Omega}|\nabla u_\alpha||\nabla u_\beta|dx\\
&\leq\,C\int_{\Omega_R}\frac{(\varepsilon+|x'|)^2}{(\varepsilon+|x'|^2)^2}dx+C\leq C.
\end{align*}
On the other hand, it follows immediately from Lemma \ref{lemma4.1} that there exists a universal constant $C$ such that
\begin{align*}
a_{\alpha\alpha}\geq\frac{1}{C},\quad\alpha=d+1, \cdots, \frac{d(d+1)}{2}.
\end{align*}

We now consider the elements for $\alpha=1,2,\cdots,d$, $\beta=d+1,d+2,\cdots,\frac{d(d+1)}{2}$. We take the case that $\alpha=1$, $\beta=d+1$ for instance. The other cases are the same. Let $\psi_{d+1}=(x_{2},-x_{1},0,\cdots,0)^{T}$. Then using (\ref{v-alpha1}) and the boundedness of $|\nabla{u}_{\alpha}|$ on $\partial{D}_{1}\setminus B_R$, we have
\begin{align*} 
a_{1{(d+1)}}=&-\int_{\partial{D}_{1}}\frac{\partial u_1}{\partial \nu_0}\large\Big|_{+}\cdot\psi_{d+1}\\
=&-\int_{\partial{D}_{1}\cap B_R}\Big(\lambda(\nabla\cdot u_1)\vec{n}+\mu(\nabla u_\alpha+(\nabla u_1)^T)\vec{n}\Big)\cdot(x_{2},-x_{1},0,\cdots,0)^{T}\\
=&-\int_{\partial{D}_{1}\cap B_R}\left(\lambda\big(\sum_{k=1}^d\partial_{x_k} u_1^k\big)n_1
+\mu\sum_{l=1}^d\left(\partial_{x_1}u_1^l+\partial_{x_l}u_1^1\right)n_l\right)x_{2}\\
&+\int_{\partial{D}_{1}\cap B_R}\left(\lambda\big(\sum_{k=1}^d\partial_{x_k} u_1^k\big)n_2
+\mu\sum_{l=1}^d\left(\partial_{x_2}u_1^l+\partial_{x_l}u_1^2\right)n_l\right)x_{1}
\end{align*}
is bounded for $d\geq2$, so $a_{1(d+1)}$.

Thus, estimates (\ref{lem5.8-2})  and (\ref{lem5.8-3}) are established.

{\bf STEP 3.} Proof of (\ref{d=2}) and (\ref{dgeq3}). Firstly, we estimate $|a_{\alpha\beta}|$  for $\alpha, \beta=1, \cdots, d$ with $\alpha\neq\beta$. By the definition,
\begin{align*}
a_{\alpha\beta}&=a_{\beta\alpha}=-\int_{\partial{D}_{1}}\frac{\partial u_\alpha}{\partial \nu_0}\large\Big|_{+}\cdot\psi_\beta\\
&=-\int_{\partial{D}_{1}}\lambda(\nabla\cdot u_\alpha)n_\beta+\mu\left((\nabla u_\alpha+(\nabla u_\alpha)^T)\vec{n}\right)_\beta\\
&=-\int_{\partial{D}_{1}}\lambda\left(\sum_{k=1}^d\partial_{x_k} u_\alpha^k\right)n_\beta
+\mu\sum_{l=1}^d\left(\partial_{x_\beta}u_\alpha^l+\partial_{x_l}u_\alpha^\beta\right)n_l.
\end{align*}

Denote
$$\mathrm{I}_{\alpha\beta}:=\int_{\partial{D}_{1}\cap B_R}\left(\sum_{k=1}^d\partial_{x_k} u_\alpha^k\right)n_\beta;
$$
and
\begin{align*}
\mathrm{II}_{\alpha\beta}:=&\int_{\partial{D}_{1}\cap B_R} \sum_{l=1}^{d}\left(\partial_{x_\beta}u_\alpha^l+\partial_{x_l}u_\alpha^\beta\right)n_l\\
=&\int_{\partial{D}_{1}\cap B_R} \sum_{l=1}^{d-1}\left(\partial_{x_\beta}u_\alpha^l+\partial_{x_l}u_\alpha^\beta\right)n_l+\int_{\partial{D}_{1}\cap B_R} \partial_{x_\beta}u_\alpha^dn_d+\int_{\partial{D}_{1}\cap B_R} \partial_{x_d}u_\alpha^\beta n_d\\
=&:\,\mathrm{II}_{\alpha\beta}^{1}+\mathrm{II}_{\alpha\beta}^{2}+\mathrm{II}_{\alpha\beta}^{3},
\end{align*}
where $$\vec{n}=\frac{(-\nabla_{x'}h(x'), 1)}{\sqrt{1+|\nabla_{x'}h(x')|^2}}.$$
Due to (\ref{h2}), for $k=1,2,\cdots,d-1$,
\begin{align}\label{n} |n_{k}|=\left|\frac{-\partial_{x_{k}}h(x')}{\sqrt{1+|\nabla_{x'}h(x')|^2}}\right|\leq C|x'|,\quad~\mbox{and}~ |n_d|=\frac{ 1}{\sqrt{1+|\nabla_{x'}h(x')|^2}}\leq1.
\end{align}

For $\alpha=1,2,\cdots,d$, $\beta=1,2,\cdots,d-1$, it follows from (\ref{v-alpha1}) and (\ref{n}) that
\begin{align}\label{a-3}
|\mathrm{I}_{\alpha\beta}|\leq&\int_{\partial{D}_{1}\cap B_R}\left|\left(\sum_{k=1}^d\partial_{x_k} u_\alpha^k\right)n_\beta\right|\nonumber\\
\leq&\int_{\partial{D}_{1}\cap B_R}\frac{C|x'|}{\varepsilon+|x'|^2}
\leq
\begin{cases}
C|\log\varepsilon|,&d=2,\\
C,&d\geq3,
\end{cases}
\end{align}
while, 
\begin{align*} 
|\mathrm{II}_{\alpha\beta}^{1}|\leq\int_{\partial{D}_{1}\cap B_R} \left|\sum_{l=1}^{d-1}\left(\partial_{x_\beta}u_\alpha^l+\partial_{x_l}u_\alpha^\beta\right)n_l\right|
\leq\int_{\partial{D}_{1}\cap B_R}\frac{C|x'|}{\sqrt{\varepsilon+|x'|^2}}
\leq\,C,
\end{align*}
\begin{align*}
|\mathrm{II}_{\alpha\beta}^{2}|\leq\int_{\partial{D}_{1}\cap B_R} \left|\partial_{x_\beta}u_\alpha^dn_d\right|\leq\int_{\partial{D}_{1}\cap B_R}\frac{C}{\sqrt{\varepsilon+|x'|^2}}
\leq\begin{cases}C|\log\varepsilon|,&d=2,\\
C,&d\geq3,
\end{cases}\quad\quad\quad\quad
\end{align*}
and by the definition of $\tilde{u}_\alpha$ and (\ref{cor3.4-1}),
\begin{align*} 
|\mathrm{II}_{\alpha\beta}^{3}|\leq\int_{\partial{D}_{1}\cap B_R}\left|\partial_{x_d} u_\alpha^\beta n_d\right|&\leq\int_{\partial{D}_{1}\cap B_R}|(\partial_{x_d} \tilde{u}_\alpha^\beta)n_d|+\int_{\partial{D}_{1}\cap B_R}|(\partial_{x_d}( u_\alpha^\beta-\tilde{u}_\alpha^\beta)n_d|\nonumber\\
&\leq\int_{\partial{D}_{1}\cap B_R}\frac{C}{\sqrt{\varepsilon+|x'|^2}}
\leq\begin{cases}C|\log\varepsilon|,&d=2,\\
C,&d\geq3.
\end{cases}
\end{align*}
Here we used the fact that $\tilde{u}_\alpha^{\beta}=0$ if $\alpha\neq\beta$. Hence, 
\begin{align*} 
|\mathrm{II}_{\alpha\beta}|
\leq
\begin{cases}C|\log\varepsilon|,&d=2,\\
C,&d\geq3.
\end{cases}
\end{align*}
This, together with (\ref{a-3}), the boundedness of $|\nabla{u}_{\alpha}|$ on $\partial{D}_{1}\setminus B_R$, and the symmetry of $a_{\alpha\beta}=a_{\beta\alpha}$, implies that for $ \alpha,\beta=1,\cdots,d$ with $\alpha\neq\beta$,
\begin{align*}
|a_{\alpha\beta}|=|a_{\beta\alpha}|\leq |\lambda||I_{\alpha\beta}|+|\mu||II_{\alpha\beta}|+C\leq\begin{cases}C|\log\varepsilon|,&d=2,\\
C,&d\geq3.
\end{cases}
\end{align*}
Therefore, (\ref{d=2}) and (\ref{dgeq3}) are proved. (\ref{det-a4}) is an immediate consequence of (\ref{lem5.8-1})--(\ref{dgeq3}). The proof of the Lemma \ref{lemma4.2} is finished.
\end{proof}

\begin{lemma}\label{lemma4.3}
\begin{align}\label{4.18}|b_{\beta}|\leq C,\quad \beta=1, \cdots,\frac{d(d+1)}{2}.
\end{align}
Consequently,
\begin{equation}\label{bound_Pi}
|P^{i}|\leq\,C,\quad\,i=1,2.
\end{equation}
\end{lemma}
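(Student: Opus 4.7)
By Green's identity \eqref{eu}, since $\mathcal{L}u_0=0$ in $\Omega$ with $u_\beta|_{\partial D_1}=\psi_\beta$ and $u_\beta|_{\partial D}=0$, one obtains $b_\beta = -\int_\Omega(\mathbb{C}^0 e(u_0), e(u_\beta))\,dx$. Symmetry of $\mathbb{C}^0$ and a second integration by parts (using $\mathcal{L}u_\beta = 0$ and $u_0|_{\partial D_1}=0$, $u_0|_{\partial D}=\varphi-\varphi(0)$) recast this as the surface integral
\[
b_\beta = -\int_{\partial D}\frac{\partial u_\beta}{\partial\nu_0}\cdot\bigl(\varphi(x)-\varphi(0)\bigr)\,dS,
\]
where the normal is the outer normal of $D$. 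Outside $B_R$ this is controlled by $|\nabla u_\beta|\le C$ from standard boundary regularity. The whole task reduces to estimating the integral on $\partial D\cap B_R$; there $u_\beta\equiv 0$ kills all tangential derivatives, so $\nabla u_\beta|_{\partial D}$ is purely normal.

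Decompose $u_\beta = \bar v\,\psi_\beta + w_\beta$ as in Section \ref{sec_3}. Lemma \ref{lem2.3} with $\psi=\psi_\beta$ gives $|\nabla w_\beta|\le C/\sqrt{\delta(x)}$ for $\beta\le d$ and $|\nabla w_\beta|\le C$ for $\beta>d$ (since $|\psi_\beta(x',\varepsilon+h_1(x'))|\le C(|x'|+\varepsilon)\le C\sqrt{\delta}$ in the second case), and by $C^{1,\gamma}$-regularity up to the boundary these bounds extend to $\partial D\cap B_R$. Combined with $|\varphi(x)-\varphi(0)|\le C|x'|$, the $w_\beta$-contribution is bounded by $C\int_{|x'|<R}|x'|/\sqrt{\varepsilon+|x'|^2}\,dx'\le C$ in every dimension. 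For the $\bar v\,\psi_\beta$-contribution, since $\bar v=0$ on $\partial D$ the normal derivative equals $(\sqrt{1+|\nabla h|^2}/\delta)\,\psi_\beta$, so the integrand is dominated by $\psi_\beta(x)\cdot(\varphi(x)-\varphi(0))/\delta$ together with a similar piece involving $\vec{n}$. When $\beta>d$, $|\psi_\beta(x)|\le C(|x'|+\varepsilon)$ on $\partial D\cap B_R$, so the contribution is at most $C\int(|x'|+\varepsilon)|x'|/(\varepsilon+|x'|^2)\,dx'\le C$ in every dimension; when $\beta\le d$ and $d\ge 3$, the naive pointwise bound $\int|x'|/(\varepsilon+|x'|^2)\,dx'$ is already finite.

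The main obstacle is the case $\beta\le d$ in dimension $d=2$, where the pointwise bound yields only $\int_{-R}^{R}|x_1|/\delta\,dx_1\sim|\log\varepsilon|$. I resolve this by a parity argument that exploits $h'(0)=h_1'(0)=0$: since $h, h_1\in C^{2,\gamma}$, both $\delta(x_1)=\varepsilon+h_1(x_1)-h(x_1)$ and $dS=\sqrt{1+h'(x_1)^2}\,dx_1$ are even in $x_1$ modulo asymmetric remainders of order $|x_1|^{2+\gamma}$, whereas the Taylor expansion $\varphi^l(x_1,h(x_1))-\varphi^l(0) = \partial_{x_1}\varphi^l(0)\,x_1 + O(x_1^2)$ begins with an odd linear term. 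The dominant integrand $\partial_{x_1}\varphi^\beta(0)\,x_1/\delta_{\mathrm{even}}(x_1)$ is then odd in $x_1$ and integrates to zero on $(-R,R)$; the asymmetric remainders produce error integrands of order $|x_1|^{3+\gamma}/\delta^2$ and the even quadratic Taylor remainder produces $x_1^2/\delta$, and the rescaling $x_1=\sqrt{\varepsilon}\,s$ verifies that both integrate to $O(1)$ uniformly in $\varepsilon$ for any $\gamma\in(0,1)$. Combining these bounds yields $|b_\beta|\le C$, whence $|P^i|\le C$ as claimed.
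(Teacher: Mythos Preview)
Your argument is correct and in fact slightly cleaner than the paper's, but the route is dual rather than identical. The paper keeps the surface integral on $\partial D_1$ for $\beta\le d$ and decomposes $u_0=\sum_l u_{0l}$, invoking Corollary~\ref{corol3.5} to control $\nabla(u_{0l}-\tilde u_{0l})$ and then running the same parity computation on the explicit piece $\partial_{x_d}\tilde u_{0l}^l=-(\varphi^l(x',h(x'))-\varphi^l(0))/\delta$; for $\beta>d$ it switches to the volume form $b_\beta=-\int_\Omega(\mathbb{C}^0e(u_0),e(u_\beta))$ and the pointwise products~\eqref{v3},~\eqref{v0}. You instead push the integral to $\partial D$ via a second application of~\eqref{eu}, decompose $u_\beta$ (not $u_0$), and treat every $\beta$ uniformly. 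Both proofs hinge on the identical cancellation: the leading term is an odd linear function of $x'$ divided by a function that is even up to $O(|x'|^{2+\gamma})$, and the asymmetric remainder $\int |x'|^{3+\gamma}/\delta^2$ is uniformly bounded. Your version avoids the Corollary~\ref{corol3.5} machinery at the cost of needing Corollary~\ref{corol3.4} up to the lower boundary, which (as you note) follows from the $C^1(\overline\Omega)$ regularity. One small caveat: for $\beta>d$ your auxiliary function $\bar v\psi_\beta$ is not literally the $\tilde v$ of Section~\ref{sec_3} (the paper freezes $\psi_\beta$ at $x_d=\varepsilon+h_1(x')$), but the two differ by a term with bounded gradient, so the quoted bound $|\nabla w_\beta|\le C$ survives.
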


\begin{proof}
{\bf STEP 1.} To estimate $|b_\beta|$ for $\beta=1,\cdots,d$. We take $\beta=1$ for instance. The other cases are the same. Denote 
\begin{align*}
 b_1=\sum_{l=1}^{d}\int_{\partial{D}_{1}}\frac{\partial u_{0l}}{\partial \nu_0}\large\Big|_{+}\cdot\psi_1:=\sum_{l=1}^{d}b_{1l}.
\end{align*}
where $u_{0l}$, $l=1,2, \cdots, d$, is defined by \eqref{v01}.
By definition, 
\begin{align*}
 b_{11}&=\int_{\partial{D}_{1}}\frac{\partial u_{01}}{\partial \nu_0}\large\Big|_{+}\cdot\psi_1\\
 &=\int_{\partial{D}_{1}}\left[\lambda(\nabla\cdot u_{01})n_1+\mu\left((\nabla u_{01}+(\nabla u_{01})^T)\vec{n}\right)_1\right]\\
&=\int_{\partial{D}_{1}}\left[\lambda\sum_{k=1}^d\partial_{x_k} u_{01}^kn_1+\mu\sum_{i=1}^d(\partial_{x_1}u_{01}^i
+\partial_{x_i}u_{01}^1)n_i \right].
\end{align*}
Denote
\begin{align*}
\mathrm{I}:=&\int_{\partial{D}_{1}}\sum_{k=1}^d\partial_{x_k} u_{01}^kn_1
=\int_{\partial{D}_{1}}\sum_{k=1}^{d-1}\partial_{x_k} u_{01}^kn_1+\int_{\partial{D}_{1}}\partial_{x_d} u_{01}^dn_1=:\mathrm{I}_{1}+\mathrm{I}_{2},
\end{align*}
and
\begin{align*}
\mathrm{II}:=&\int_{\partial{D}_{1}}\sum_{i=1}^d(\partial_{x_1}u_{01}^i
+\partial_{x_i}u_{01}^1)n_i\\
=&\int_{\partial{D}_{1}}\sum_{i=1}^{d-1}(\partial_{x_1}u_{01}^i
+\partial_{x_i}u_{01}^1)n_i+\int_{\partial{D}_{1}}\partial_{x_1}u_{01}^dn_d
+\int_{\partial{D}_{1}}\partial_{x_d}u_{01}^1n_d\\
=&:\mathrm{II}_{1}+\mathrm{II}_{2}+\mathrm{II}_{3}.
\end{align*}

According to (\ref{step3-4})-(\ref{step3-5}), 
 \begin{align*}
&|\mathrm{I}_{1}|\leq\left|\int_{\partial{D}_{1}}\sum_{k=1}^{d-1}\partial_{x_k} u_{01}^kn_1\right|\leq \int_{\partial{D}_{1}\cap B_R}C|x'|+C\leq C;\\
&|\mathrm{I}_{2}|\leq\left|\int_{\partial{D}_{1}} \partial_{x_d} u_{01}^dn_1\right|\leq\int_{\partial{D}_{1}\cap B_R}\frac{C|\nabla_{x'}\varphi^1(0)||x'|^2}{\varepsilon+|x'|^2}
+C\leq C.
 \end{align*}
 So that, 
\begin{align}\label{b1}
|\mathrm{I}| &\leq |\mathrm{I}_{1}|+ |\mathrm{I}_{2}|\leq C.
 \end{align}

By (\ref{step3-4}), (\ref{n}) and the definition of $\tilde{u}_{01}$,  
 \begin{align}\label{b2}
|\mathrm{II}_{1}|\leq&\left|\int_{\partial{D}_{1}}\sum_{i=1}^{d-1}(\partial_{x_1}u_{01}^i
+\partial_{x_i}u_{01}^1)n_i \right|
\leq\int_{\partial{D}_{1}\cap B_R}C|x'|+C\leq C,
 \end{align}
 and
 \begin{align}
 |\mathrm{II}_{2}|\leq&\int_{\partial{D}_{1}}|\partial_{x_1} u_{01}^dn_d|\nonumber\\
 \leq&\int_{\partial{D}_{1}\cap B_R}|\partial_{x_1} \tilde{u}_{01}^dn_d|+\int_{\partial{D}_{1}\cap B_R}|\partial_{x_1} (u_{01}^d-\tilde{u}_{01}^d)n_d|+C\nonumber\\
\leq&C|\partial{D}_{1}\cap B_R|+C
\leq C.\label{b3}
\end{align}

Now, we need only to estimate $\mathrm{II}_{3}$. Note that
\begin{align*}
\mathrm{II}_{3}&=\int_{\partial{D}_{1}\cap B_R}\partial_{x_{d}} \tilde{u}_{01}^1n_d+\int_{\partial{D}_{1}\cap B_R}\partial_{x_{d}}( u_{01}^1-\tilde{u}_{01}^1) n_d=:\mathrm{II}_{3}^{1}+\mathrm{II}_{3}^{2}.
\end{align*}
By the definitions of $\tilde{u}_{01}^1$ and $\bar{v}$,
\begin{align*}
\partial_{x_{d}}\tilde{u}_{01}^1=-(\varphi^1(x',h(x'))-\varphi^1(0))\partial_{x_{d}}\bar{v}=-\frac{\varphi^1(x',h(x'))-\varphi^1(0)}{\varepsilon+h_{1}(x')-h(x')}.
\end{align*}
From the  expression of
$\partial{D}_{1}\cap B_R:  x_{d}=\varepsilon+h_1(x'),\ |x'|<R$, we have $dS=\sqrt{1+|\nabla_{x'}h_1(x')|^2}dx'$. Then, by the Taylor expansion (\ref{taylor}), we have
 \begin{align*}
\mathrm{II}_{3}^{1}=&\int_{\partial{D}_{1}\cap B_R}\partial_{x_{d}} \tilde{u}_{01}^1n_d\\
 &=\int_{|x'|<R}\frac{-(\varphi^1(x',h(x'))-\varphi^1(0))}{\varepsilon+h_1(x')-h(x')}dx'\\
 &=-\int_{|x'|<R}\frac{\nabla_{x'}\varphi^1(0)x'+O(|x'|^2)}{\varepsilon+h_1(x')-h(x')}dx'.
\end{align*}
Since
\begin{align}\label{bound}\frac{1}{C(\varepsilon+|x'|^2)}\leq\frac{1}{\varepsilon+h_1(x')-h(x')}\leq\frac{C}{\varepsilon+|x'|^2},\quad |x'|\leq R,\end{align}
it follows that
\begin{align*}
\left|\int_{|x'|<R}\frac{O(|x'|^2)}{\varepsilon+h_1(x')-h(x')}dx'\right|\leq C.
\end{align*}
While, according to (\ref{bound}), we have
\begin{align*}
&\int_{|x'|<R}\frac{\nabla_{x'}\varphi^1(0)x'}{\varepsilon+h_1(x')-h(x')}dx'\\
&=\int_{|x'|<R}\frac{\nabla_{x'}\varphi^1(0)x'}{\varepsilon+\frac{1}{2}{x'}^T(\nabla^2_{x'}(h_1-h)(0'))x'}dx'\\
&+\int_{|x'|<R}\frac{O(|x'|^{3+\gamma})}{(\varepsilon+\frac{1}{2}{x'}^T(\nabla^2_{x'}(h_1-h)(0'))x')(\varepsilon+h_1(x')-h(x'))}dx'.
\end{align*}
For the positive matrix $(\nabla^2_{x'}(h_1-h)(0'))$, there exists orthogonal matrix $O$, such that
\begin{align*}
O^T(\nabla^2_{x'}(h_1-h)(0'))O=diag (\lambda_1,\cdots,\lambda_{d-1}),
\end{align*}
where $\lambda_i\geq \kappa_1,\ i=1,\cdots,d-1.$ Under the orthogonal transform $y'=Ox'$, we obtain
\begin{align*}
&\int_{|x'|<R}\frac{\nabla_{x'}\varphi^1(0)x'}{\varepsilon+\frac{1}{2}{x'}^T(\nabla^2_{x'}(h_1-h)(0'))x'}dx'=\int_{|y'|<R}\frac{\nabla_{x'}\varphi^1(0)O^Ty'}{\varepsilon+\Sigma_{i=1}^{d-1}\lambda_iy_i^2}dy'=0,
\end{align*}
and
\begin{align*}
\left|\int_{|x'|<R}\frac{O(|x'|^{3+\gamma})}{(\varepsilon+\frac{1}{2}{x'}^T(\nabla^2_{x'}(h_1-h)(0'))x')(\varepsilon+h_1(x')-h(x'))}dx'\right|\leq C.
\end{align*}
Therefore,
\begin{align}\label{b4}
 &|\mathrm{II}_{3}^{1}|=\left|\int_{\partial{D}_{1}\cap B_R}\partial_{x_{d}} \tilde{u}_{01}^1n_d\right|\leq C.
\end{align}
On the other hand, in view of \eqref{step3-3},
\begin{align*}
|\mathrm{II}_{3}^{2}|\leq\left|\int_{\partial{D}_{1}\cap B_R}\partial_{x_{d}}( u_{01}^1-{\tilde{u}}_{01}^1) n_d\right|\leq C.
 \end{align*}
This, together with (\ref{b4}), implies that
\begin{align}\label{b5}
|\mathrm{II}_{3}|\leq C.
 \end{align}
Combining (\ref{b1})-(\ref{b3}) and (\ref{b5}), we have
\begin{align*}
|b_{11}|\leq  C.
\end{align*}

Next, for $l=2,\cdots, d,$
\begin{align*}
 b_{1l}&=\int_{\partial{D}_{1}}\frac{\partial u_{0l}}{\partial \nu_0}\large\Big|_{+}\cdot\psi_1\\
 &=\int_{\partial{D}_{1}}\left[\lambda(\nabla\cdot u_{0l})n_1+\mu\left((\nabla u_{0l}+(\nabla u_{0l})^T)\vec{n}\right)_1\right]\\
&=\int_{\partial{D}_{1}}\left[\lambda\sum_{k=1}^d\partial_{x_k} u_{0l}^kn_1+\mu\sum_{i=1}^d(\partial_{x_1}u_{0l}^i
+\partial_{x_i}u_{0l}^1)n_i\right],
\end{align*}
Similarly, making use of (\ref{step3-4}), (\ref{step3-5}) and (\ref{n}), we have
\begin{align}\label{b1'}
 \left|\int_{\partial{D}_{1}}\sum_{k=1}^d\partial_{x_k} u_{0l}^kn_1\right|\ &\leq  \left|\int_{\partial{D}_{1}}\sum_{k=1}^{d-1}\partial_{x_k} u_{0l}^kn_1\right|+ \left|\int_{\partial{D}_{1}} \partial_{x_d} u_{0l}^dn_1\right|\leq C.
 \end{align}
and recalling the definition of $\tilde{u}_{0l}$, and $\tilde{u}_{0l}^{1}=0$,
\begin{align*}
&\left|\int_{\partial{D}_{1}}\sum_{i=1}^d(\partial_{x_1}u_{0l}^i
+\partial_{x_i}u_{0l}^1)n_i \right|\\
\leq& \left|\int_{\partial{D}_{1}}\sum_{i=1}^{d-1}(\partial_{x_1}u_{0l}^i
+\partial_{x_i}u_{0l}^1)n_i \right|+\left|\int_{\partial{D}_{1}}\partial_{x_1} (u_{0l}^d-\tilde{u}_{0l}^d)n_d\right|\\
&+\left|\int_{\partial{D}_{1}}\partial_{x_1} \tilde{u}_{0l}^dn_d\right|
 +\left|\int_{\partial{D}_{1}}\partial_{x_{d}} (u_{0l}^1-\tilde{u}_{0l}^1)n_d\right|\\
\leq& \,C.
 \end{align*}
This implies that
$$|b_{1l}|\leq  C,\quad l=2,\cdots,d.$$
Hence,
$$|b_{1}|\leq  C.$$

{\bf STEP 2.} To estimate $|b_\beta|$ for $\beta=d+1,\cdots,\frac{d(d+1)}{2}$. By using (\ref{v3}) and (\ref{v0}), we have
\begin{align*}
|b_\beta|&=\left|\int_{\Omega}\left(\mathbb{C}^0e(u_0), e(u_{\beta})\right)dx\right|\\
&\leq C\int_{\Omega}|\nabla u_0||\nabla u_{\beta}|dx\\
&\leq\int_{\Omega_{R}}\frac{C|\nabla_{x'}\varphi(0)||x'|(\varepsilon+|x'|)}{(\varepsilon+|x'|^2)^2}dx+C\\
&\leq C.
\end{align*}
The proof of Lemma \ref{lemma4.3} is completed.
\end{proof}

\begin{proof}[Proof of Proposition \ref{prop1}]
{\bf Step 1.} Proof of (\ref{C3}). 

Let $u^{\varepsilon}$ be the solution of (\ref{main}). By theorem 6.6 in the appendix in \cite{BLL}, $u^{\varepsilon}$ is the minimizer of
\begin{align*}
I_\infty[u]:=\frac{1}{2}\int_{\Omega}\left(\mathbb{C}^0e(u), e(u)\right)dx
\end{align*}
on $\mathcal{A}$ defined by \eqref{def_A}. It follows that
$$\|u^{\varepsilon}\|_{H^1(\Omega)}^2\leq C\|e(u^{\varepsilon})\|_{L^2(\Omega)}^2\leq CI_\infty[u^{\varepsilon}]\leq C.$$
By the Sobolev trace embedding theorem,
$$\|u^{\varepsilon}\|_{L^2(\partial{D}_{1}\cap B_R)}\leq C.$$
Recalling that
\begin{align*}
u^{\varepsilon}=\sum_{\alpha=1}^{\frac{d(d+1)}{2}}C^\alpha \psi_\alpha,\quad \hbox{on}\ \partial{D}_{1}.
\end{align*}
If $\mathcal{C}:=(C^1, C^2,\cdots, C^{\frac{d(d+1)}{2}})=0$, there is nothing to prove. Otherwise,
\begin{align}\label{CC}
C\geq|\mathcal{C}|\left\|\sum_{\alpha=1}^{\frac{d(d+1)}{2}}\widehat{C}_\alpha \psi_\alpha\right\|_{L^2(\partial{D}_{1}\cap B_R)},
\end{align}
where $\widehat{C}_\alpha=\frac{C^\alpha}{|\mathcal{C}|}$ and $|\widehat{C}|=1$. It is easy to see that
\begin{align}\label{lowerbound}
\left\|\sum_{\alpha=1}^{\frac{d(d+1)}{2}}\widehat{C}_\alpha \psi_\alpha\right\|_{L^2(\partial{D}_{1}\cap B_R)}\geq\frac{1}{C}.
\end{align}
Indeed, if not, along a subsequence $\varepsilon\rightarrow0,~\widehat{C}_\alpha\rightarrow\overline{C}_\alpha$, and
\begin{align*}
\left\|\sum_{\alpha=1}^{\frac{d(d+1)}{2}}\overline{C}_\alpha \psi_\alpha\right\|_{L^2(\partial{D}_{1}^*\cap B_R)}=0,
\end{align*}
where $\partial{D}_{1}^*$ is the limit of $\partial{D}_{1}$ as $\varepsilon\rightarrow0$ and $|\overline{C}|=1.$ This implies
$$\sum_{\alpha=1}^{\frac{d(d+1)}{2}}\overline{C}_\alpha \psi_\alpha=0\quad\hbox{on}\ \partial{D}_{1}^*\cap B_R.$$
But $\{\psi_\alpha|_{\partial{D}_{1}^*\cap B_R}\} $ is easily seen to be linear independent, according to lemma 6.1 in the appendix of \cite{BLL2}, we must have $\overline{C}=0$. This is a contradiction.
(\ref{C3}) follows from (\ref{CC}) and (\ref{lowerbound}).

{\bf Step 2.} Proof of (\ref{C4}). According to Lemma \ref{lemma4.1},  the matrix 
\begin{gather*}  \begin{pmatrix}A&B\\B^T&D\end{pmatrix}\end{gather*}  is positive definite, so invertable. Moreover, 
\begin{align}
A\geq\frac{1}{C}I_{d\times d},\qquad
D\geq\frac{1}{C}I_{\frac{d(d-1)}{2}\times \frac{d(d-1)}{2}}.\label{D}
\end{align}
Therefore, from \eqref{eq5.3}, we have
\begin{gather*}  
\begin{pmatrix} X^1\\  X^2 \end{pmatrix}=\begin{pmatrix}A&B\\B^T&D\end{pmatrix}^{-1}\begin{pmatrix}P^1\\P^2\end{pmatrix}.
\end{gather*}

For $d\geq4$, it is easy to see from Lemma \ref{lemma4.1} and Lemma \ref{lemma4.3} that
$$|X^{1}|\leq\,C.$$

Next, we prove \eqref{C4} for $d=2,3$. By lemma 6.2 in Appendix of \cite{BLL2} and Lemma \ref{lemma4.2},
\begin{gather*}\begin{pmatrix}A&B \\  B^T&D\end{pmatrix}^{-1}=\begin{pmatrix}A^{-1}& 0\\  0&D^{-1} \end{pmatrix}+\begin{pmatrix}Errors\end{pmatrix},
\end{gather*}
where
\begin{align}\label{errors}
\left|\begin{pmatrix}Errors\end{pmatrix}\right|\sim o(\rho_d(\varepsilon)).
\end{align}
Then,  
\begin{gather*}  \begin{pmatrix} X^1\\  X^2 \end{pmatrix}=\begin{pmatrix}A^{-1}& 0\\  0&D^{-1} \end{pmatrix}\begin{pmatrix}P^1\\P^2\end{pmatrix}+\begin{pmatrix}Errors\end{pmatrix}\begin{pmatrix}P^1\\P^2\end{pmatrix}
=\begin{pmatrix}A^{-1}P^1\\D^{-1}P^2\end{pmatrix}+\begin{pmatrix}Errors\end{pmatrix}.
\end{gather*}
Therefore,
\begin{gather}\label{X1}   X^1=A^{-1}P^1+Errors=\frac{1}{\det A}A^*P^1+Errors,
\end{gather}
where $A^*=\left(a_{\alpha\beta}^*\right)$ is the adjoint matrix of $A$. Following Lemma \ref{lemma4.2}, it is clear that
\begin{gather*}A^*\sim\begin{pmatrix}\frac{\tilde{c}_{1}}{(\rho_d(\varepsilon))^{d-1}}&\cdots&o\left(\frac{1}{(\rho_d(\varepsilon))^{d-1}}\right) \\  \vdots & \ddots& \vdots\\ o\left(\frac{1}{(\rho_d(\varepsilon))^{d-1}}\right)& \cdots&
\frac{\tilde{c}_{d}}{(\rho_d(\varepsilon))^{d-1}}\end{pmatrix},
\end{gather*} for some constants $\tilde{c}_\alpha\neq0, \alpha=1,\cdots,d$, independent of $\varepsilon$.
In view of \eqref{det-a4} and \eqref{bound_Pi}, we  obtain
\begin{align*}
 |X^1| \leq C\rho_d(\varepsilon).
\end{align*}
Therefore,
\begin{align*}
 |C^{\alpha}-\varphi^{\alpha}(0)| \leq C\rho_d(\varepsilon), \quad \alpha=1,\cdots, d.
\end{align*}
Proposition \ref{prop1} is established.
\end{proof}

\vspace{1cm}

\section{Proof of Theorem \ref{thm1.2} (Lower bound)}\label{sec_5}

In order to prove  Theorem \ref{thm1.2}, we first prove  $b_{\beta}\to\,b_{\beta}^*$, as $\varepsilon\to0$, $\beta=1,\cdots, d$.

\begin{lemma}\label{lemma5.1}
For $d\geq3$, $\beta=1,2,\cdots,d$, 
\begin{align*}
|b_\beta-b_\beta^*|\leq C\left(|\nabla_{x'}\varphi(0)|+\|\nabla^2\varphi\|_{L^\infty(\partial{D})}\right)\varepsilon^{\gamma_{d}};
\end{align*}
for $d=2$, if $\nabla_{x'}\varphi^\beta(0)=0$ for $\beta=1$ or $2$, then, for $\alpha\neq\beta$, 
\begin{align*}
|b_\beta-b_\beta^*|\leq C\left(|\nabla_{x'}\varphi^{\alpha}(0)|+\|\nabla^2\varphi\|_{L^\infty(\partial{D})}\right)\varepsilon^{\gamma_{2}},
\end{align*}
where
$$\gamma_{d}=\begin{cases}
\frac{d-2}{2(d-1)},&d\geq3,\\
\frac{1}{6},&d=2.
\end{cases}
$$
Consequently, 
\begin{align*}
b_{\beta}\rightarrow b_{\beta}^*,\quad\hbox{as}\ \varepsilon\rightarrow 0,\quad\beta=1,2,\cdots,d.
\end{align*}
\end{lemma}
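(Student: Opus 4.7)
The plan is to convert the boundary integral defining $b_\beta$ into a bulk integral against an auxiliary function on $\Omega$, construct the corresponding ``starred'' auxiliary function on the limit domain $\Omega^*$, and then bound the difference of the two bulk integrals by splitting the domain into a narrow region near the touching point $P$ and its complement. Since $u_\beta = \psi_\beta$ on $\partial D_1$ and $u_\beta = 0$ on $\partial D$, integration by parts together with $\mathcal{L}_{\lambda,\mu} u_0 = 0$ yields
\begin{equation*}
b_\beta \;=\; \int_{\partial D_1}\frac{\partial u_0}{\partial\nu_0}\Big|_+\!\cdot u_\beta \;=\; -\int_{\Omega}(\mathbb{C}^0 e(u_0), e(u_\beta))\,dx,
\end{equation*}
and defining $u_\beta^*$ on $\Omega^*$ as the analog of $u_\beta$ (with $u_\beta^* = \psi_\beta$ on $\partial D_1^*$ and $u_\beta^* = 0$ on $\partial D$), the same calculation gives $b_\beta^* = -\int_{\Omega^*}(\mathbb{C}^0 e(u_0^*), e(u_\beta^*))\,dx$. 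I then split each integral as a contribution over $\Omega_R$ versus $\Omega\setminus\Omega_R$ (and similarly for $\Omega^*$).

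The far-field piece is handled by standard elliptic theory. On $\Omega\setminus\Omega_R$ the domains $\Omega$ and $\Omega^*$ differ only by a translation of $\partial D_1$ by an amount $O(\varepsilon)$; interior and boundary Schauder estimates applied to the Lam\'e system, together with the uniform $H^1$ bounds obtained via the second Korn inequality, give
\begin{equation*}
\|\nabla u_0 - \nabla u_0^*\|_{L^\infty(\Omega\setminus\Omega_R)} + \|\nabla u_\beta - \nabla u_\beta^*\|_{L^\infty(\Omega\setminus\Omega_R)} \leq C\varepsilon,
\end{equation*}
so the contribution of $\Omega\setminus\Omega_R$ to $b_\beta - b_\beta^*$ is $O(\varepsilon)$ and thus absorbable.

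The genuine work lies in the narrow region. Here I invoke the fine estimates of Corollaries \ref{corol3.4} and \ref{corol3.5} together with the Taylor expansion
\begin{equation*}
\varphi^l(x', h(x')) - \varphi^l(0) \;=\; \nabla_{x'}\varphi^l(0)\cdot x' + O(|x'|^2), \qquad |x'|<R,
\end{equation*}
and the representations of $\tilde{u}_{0l}, \tilde{u}_\alpha$ used in Section \ref{sec_3}. After substituting these ansatze and using the orthogonal change of variables that diagonalizes $\nabla_{x'}^2(h_1 - h)(0')$ (exactly as in Step 3 of Lemma \ref{lemma4.2} and in the analysis of $\mathrm{II}_3^1$ in Lemma \ref{lemma4.3}), the leading odd-in-$x'$ pieces coming from the linear part $\nabla_{x'}\varphi(0)\cdot x'$ integrate to zero by symmetry on $\Omega_R$. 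What remains is controlled by an integral of the form $\int_{|x'|<R}(\varepsilon + |x'|^2)^{-1}|x'|^{1+\gamma}\,dx'$ and by the far-field error raised to a small power. Balancing the narrow-region $L^2$-error (of size $\varepsilon^{1/2}$ in the relevant Korn-type norms from Lemma \ref{lem2.2}) against the far-field $L^\infty$-error yields the exponent $\gamma_d = (d-2)/(2(d-1))$ for $d\geq3$. The analogous computation for $b_\beta^*$ produces a matching leading term, so the difference is $O(\varepsilon^{\gamma_d})$.

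In dimension $d=2$ the same analysis produces an additional contribution of order $|\log\varepsilon|$ coming from the $\beta$-th component of the linear term in the Taylor expansion (compare (\ref{d=2}) and the estimate of $\mathrm{I}_2,\mathrm{II}_2,\mathrm{II}_3$ in Lemma \ref{lemma4.3}); this term is eliminated exactly by the hypothesis $\nabla_{x'}\varphi^\beta(0)=0$. The residual off-diagonal contribution, involving $\nabla_{x'}\varphi^\alpha(0)$ with $\alpha\neq\beta$, is multiplied by a factor that vanishes after the orthogonal change of variables up to an $O(|x'|^{2+\gamma})$ remainder, and a careful interpolation of this remainder against the $H^1$-difference of $u_0$ and $u_0^*$ in the narrow region gives the residual exponent $\gamma_2 = 1/6$. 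The main obstacle will be carrying out this $d=2$ cancellation with enough precision to produce the correct exponent and the specific coefficient dependence $|\nabla_{x'}\varphi^\alpha(0)| + \|\nabla^2\varphi\|_{L^\infty}$ rather than the full $|\nabla_{x'}\varphi(0)|$.
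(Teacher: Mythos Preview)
Your proposal has a real gap at the very first step: you compare the \emph{bulk} integrals
\[
b_\beta=-\int_{\Omega}(\mathbb{C}^0 e(u_0),e(u_\beta))\,dx,\qquad
b_\beta^*=-\int_{\Omega^*}(\mathbb{C}^0 e(u_0^*),e(u_\beta^*))\,dx,
\]
whereas the paper pushes the integration by parts one step further and writes
\[
b_\beta-b_\beta^*=\int_{\partial D}\frac{\partial(u_\beta-u_\beta^*)}{\partial\nu_0}\Big|_{+}\cdot(\varphi(x)-\varphi(0)).
\]
This is not a cosmetic difference. In the boundary form the integral lives on $\partial D$, where both $u_\beta$ and $u_\beta^*$ are defined and vanish, and the weight $\varphi(x)-\varphi(0)$ supplies exactly the decay at the touching point needed to make the near-field piece integrable. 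In your bulk form the two integrals are over \emph{different} domains $\Omega$ and $\Omega^*$; their symmetric difference is the crescent $\{h_1(x')<x_d<\varepsilon+h_1(x')\}$ where $|\nabla u_\beta^*|\sim|x'|^{-2}$ and $|\nabla u_0^*|\sim|x'|^{-1}$, so the crude estimate $\varepsilon\int_{|x'|<R}|x'|^{-3}\,dx'$ diverges in low dimensions. Your ``matching leading term'' sentence does not address this, and the symmetry cancellation you invoke (from Lemma~\ref{lemma4.3}) only shows that each $b_\beta$ is bounded, not that the difference is small.

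A second gap is the far-field claim $\|\nabla(u_\beta-u_\beta^*)\|_{L^\infty(\Omega\setminus\Omega_R)}\le C\varepsilon$. Schauder estimates and uniform $H^1$ bounds give regularity of each function separately; they do not by themselves compare solutions on two different domains. The paper obtains this comparison by an explicit boundary estimate of $u_\beta-u_\beta^*$ on $\partial\big(D\setminus\overline{D_1\cup D_1^*\cup\mathcal{C}_{\varepsilon^{\gamma}}}\big)$ (using \eqref{v12} and the $\varepsilon$-translation), then applies the \emph{maximum principle for Lam\'e systems} to push this into the interior, and only then invokes gradient estimates on $\partial D$. Without that step (or an equivalent domain-perturbation argument) your $O(\varepsilon)$ far-field bound is unproven, and your ``balancing'' that produces $\gamma_d$ has nothing to balance against. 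You should rewrite $b_\beta-b_\beta^*$ as the $\partial D$ integral above and then split $\partial D$ into a shrinking cap $\partial D\cap\mathcal{C}_{\varepsilon^{1/3-\tilde\gamma}}$ (handled via the explicit auxiliary functions $\tilde u_\beta,\tilde u_\beta^*$ and the $w$-remainders) and its complement (handled via the maximum principle).
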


\begin{proof}We here prove the case $\beta=1$ for instance. The  other cases are the same. It follows from the definitions of $u_{0}$ and $u_{1}$ and the integration by parts formula (\ref{eu}) that
\begin{align*}
b_1&=\int_{\partial{D}_{1}}\frac{\partial u_0}{\partial \nu_0}\large|_{+}\cdot u_1=\int_{\Omega}\left(\mathbb{C}^0e(u_1), e(u_0)\right)=\int_{\partial{D}}\frac{\partial u_1}{\partial \nu_0}\large|_{+}\cdot (\varphi(x)-\varphi(0)).\\
\end{align*}
Similarly,
\begin{align*}
b_1^*&=\int_{\partial{D}_{1}^*}\frac{\partial u^*_0}{\partial \nu_0}\large|_{+}\cdot\psi_1=\int_{\partial{D}}\frac{\partial u_1^*}{\partial \nu_0}\large|_{+}\cdot (\varphi(x)-\varphi(0)),\\
\end{align*}
where $u_1^*$ satisfies
\begin{align}\label{u1*}
\begin{cases}
  \mathcal{L}_{\lambda,\mu}u_1^*=0,\quad&
\hbox{in}\  \Omega^*,  \\
u_1^*=\psi_1,\ &\hbox{on}\ \partial{D}_{1}^*\setminus\{0\},\\
u_1^*=0,&\hbox{on} \ \partial{D}.
\end{cases}
\end{align}
Thus,
$$b_{1}-b_{1}^{*}=\int_{\partial{D}}\frac{\partial (u_1-u_{1}^{*})}{\partial \nu_0}\Big|_{+}\cdot (\varphi(x)-\varphi(0)).$$

Similarly as before, in order to estimate the difference $u_1-u_{1}^{*}$, we introduce two auxiliary functions
\begin{gather*} \tilde{u}_1=\begin{pmatrix}
\bar{v}\\0\\
\vdots\\
0\end{pmatrix},\qquad\mbox{and}\quad~~ \tilde{u}_1^*=\begin{pmatrix}
\bar{v}^*\\
0\\
\vdots\\
0\end{pmatrix},
\end{gather*}
where $\bar{v}$ is defined in Section \ref{sec_3}, and $\bar{v}^*$ satisfies $\bar{v}^*=1$ on $\partial{D}_{1}^*\setminus\{0\}$, $\bar{v}^*=0$ on $\partial D$, and
\begin{align*}
\bar{v}^*=\frac{x_{d}-h(x')}{h_1(x')-h(x')},\quad\hbox{on}\ \Omega_R^*,\qquad\|\bar{v}^*\|_{C^{2}(\overline{\Omega^*\setminus\Omega_{R/2}^*})}\leq\,C,
\end{align*}
where $\Omega_r^*:=\left\{~x\in\Omega^*~\big|~ |x'|<r~\right\}$, for $r<R$.
By (\ref{h2}) and (\ref{h1}),   we have, for $x\in\Omega_R^*$,
\begin{align}\label{estimate1}
&|\nabla_{x'}(\tilde{u}_1^1-\tilde{u}_1^{*1})|\leq\frac{C}{|x'|},
\end{align}
and
\begin{align}\label{estimate1a}
|\partial_{x_{d}}(\tilde{u}_1^1-\tilde{u}_1^{*1})|=\Big|\frac{1}{h_1(x')-h(x')}-\frac{1}{\varepsilon+h_1(x')-h(x')}\Big|\leq\frac{C\varepsilon}{|x'|^2(\varepsilon+|x'|^{2})}.
\end{align}
Applying Corollary \ref{corol3.4} to (\ref{u1*}), we obtain
\begin{align}\label{u1*-tildeu1*}
|\nabla (u_1^*-\tilde{u}_1^*)(x)|\leq \frac{C}{|x'|},\quad   x\in\Omega_R^*;
\end{align}
and
\begin{align}\label{u-1*}
|\nabla_{x'}u_1^*(x)|\leq \frac{C}{|x'|},\quad |\partial_{x_{d}}u_1^*(x)|\leq \frac{C}{|x'|^2},\quad x\in\Omega_R^*.
\end{align}
Define a cylinder
$$\mathcal{C}_{r}:=\left\{x\in\mathbb{R}^{d}~\big|~|x'|<r,0\leq\,x_{n}\leq\varepsilon+2\max_{|x'|=r}h_{1}(x')\right\},$$
for $r<R_{0}$. Next, we divide into two steps to estimate the difference $u_1-u_{1}^{*}$.

{\bf STEP 1.} Notice that $u_1-u_1^*$ satisfies
\begin{align*}
\begin{cases}
  \mathcal{L}_{\lambda,\mu}(u_1-u_1^*)=0,\quad&
\hbox{in}\   D\setminus(\overline{D_{1}\cup D_{1}^{*}}),  \\
u_1-u_1^*=\psi_1-u_1^*,\ &\hbox{on}\ \partial{D}_{1} \setminus D_{1}^{*},\\
u_1-u_1^*=u_1-\psi_{1},&\hbox{on} \ \partial{D}_{1}^{*}\setminus (D_{1}\cup\{0\}),\\
u_1-u_1^*=0,&\hbox{on} \ \partial D.\\
\end{cases}
\end{align*}
We first estimate $|u_1-u_1^*|$ on $\partial(D_{1}\cup{D}_{1}^{*})\setminus\mathcal{C}_{\varepsilon^{\gamma}}$, where $0<\gamma<1/2$ to be determined later. For $\varepsilon$ sufficiently small, in view of the definition of $u_1^*$,
\begin{align*}
|\partial_{x_{d}}u_1^*(x)|\leq C,\quad  x\in\Omega^*\setminus\Omega_R^*,
\end{align*}
we have, for $x\in \partial{D}_{1} \setminus D_{1}^{*}$, 
\begin{align}\label{boundary1}|(u_1-u_1^*)(x', x_{d})|&=|u_1^*(x',x_{d}-\varepsilon)-u_1^*(x', x_{d})|\leq C\varepsilon.
\end{align}
For $x\in\partial{D}_{1}^*\setminus(D_{1}\cup \mathcal{C}_{\varepsilon^{\gamma}})$, by (\ref{v12}),
\begin{align}\label{boundary2}|(u_1-u_1^*)(x', x_{d})|&=|u_1(x',x_{d})-u_1(x', x_{d}+\varepsilon)|\nonumber\\
&\leq \frac{C\varepsilon}{\varepsilon+|x'|^2}\leq C\varepsilon^{1-2\gamma}.
\end{align}
By using (\ref{estimate1a}), (\ref{cor3.4-1}) and (\ref{u1*-tildeu1*}), we have, for $x\in\Omega_{R}^{*}$ with $|x'|=\varepsilon^{\gamma}$,
\begin{align*}%\label{partial_xd}
\left|\partial_{x_{d}}(u_1-u_1^*)(x',x_{d})\right|
=&\,\left|\partial_{x_{d}}(\tilde{u}_1-\tilde{u}_1^*)+\partial_{x_{d}}(u_1-\tilde{u}_1)
+\partial_{x_{d}}(u_1^*-\tilde{u}_1^*)\right|(x',x_{d})\nonumber\\
\leq&\,\frac{C\varepsilon}{|x'|^{2}(\varepsilon+|x'|^{2})}+\frac{C}{|x'|}\nonumber\\
\leq&\,\frac{C}{\varepsilon^{4\gamma-1}}+\frac{C}{\varepsilon^{\gamma}}.
\end{align*}
Thus, for $x\in\Omega_{R}^{*}$ with $|x'|=\varepsilon^{\gamma}$, recalling $u_1-u_1^*=0$ on $\partial D$, we have
\begin{align}\label{boundary3}
|(u_1-u_1^*)(x',x_{d})|=&|(u_1-u_1^*)(x',x_{d})-(u_1-u_1^*)(x',h(x'))|\nonumber\\
\leq&\sup_{h(x')\leq\,x_{d}\leq\,h_{1}(x')}\big|\partial_{x_{d}}(u_1-u_1^*)(x',x_{d})\big|_{|x'|=\varepsilon^{\gamma}}\cdot (h_1(x')-h(x'))\nonumber\\
\leq&(\frac{C}{\varepsilon^{4\gamma-1}}+\frac{C}{\varepsilon^{\gamma}})\cdot \varepsilon^{2\gamma}\leq C(\varepsilon^{1-2\gamma}+\varepsilon^{\gamma}).
\end{align}
Letting $1-2\gamma=\gamma$, we take $\gamma=1/3$. Combining (\ref{boundary1}), (\ref{boundary2}) and (\ref{boundary3}), and recalling $u_1-u_1^*=0$ on $\partial D$, we obtain
\begin{align*}
|(u_1-u_1^*)(x)|\leq C\varepsilon^{1/3},\quad x\in\partial(D\setminus(\overline{D_{1}\cup D_{1}^{*}\cup \mathcal{C}_{\sqrt[3]{\varepsilon}}})).
\end{align*}
Applying the maximum principle for Lam\'{e} systems, see \cite{MMN},
\begin{align*}
|(u_1-u_1^*)(x)|\leq C\varepsilon^{1/3},\quad\mbox{in}~~ D\setminus\overline{(D_{1}\cup D_{1}^{*}\cup \mathcal{C}_{\sqrt[3]{\varepsilon}})}.
\end{align*}
Then using the standard interior and boundary estimates for Lam\'e system, we have, for any $0<\tilde{\gamma}<1/3$,\begin{align*}
|\nabla(u_1-u_1^*)(x)|\leq C\varepsilon^{\tilde{\gamma}},\quad\mbox{in}~~ D\setminus\overline{(D_{1}\cup D_{1}^{*}\cup \mathcal{C}_{\varepsilon^{\frac{1}{3}-\tilde{\gamma}}})}.
\end{align*}
This implies that 
\begin{align}\label{B_out}
|\mathcal{B}^{out}|:=\Big|\int_{\partial{D}\setminus \mathcal{C}_{\varepsilon^{\frac{1}{3}-\tilde{\gamma}}}}\frac{\partial(u_1-u_1^{*})}{\partial\nu_0}\large\Big|_{+}\cdot (\varphi(x)-\varphi(0))\Big|\leq C\varepsilon^{\tilde{\gamma}},
\end{align}
where $0<\tilde{\gamma}<1/3$ will be determined later.

{\bf STEP 2.} In the following, we estimate 
\begin{align*}
\mathcal{B}^{in}:=&\int_{\partial{D}\cap \mathcal{C}_{\varepsilon^{\frac{1}{3}-\tilde{\gamma}}}}\frac{\partial(u_1-u_1^{*})}{\partial\nu_0}\large\Big|_{+}\cdot (\varphi(x)-\varphi(0))\\
=&\int_{\partial{D}\cap \mathcal{C}_{\varepsilon^{\frac{1}{3}-\tilde{\gamma}}}}\frac{\partial(\tilde{u}_1-\tilde{u}_1^{*})}{\partial\nu_0}\large\Big|_{+}\cdot (\varphi(x)-\varphi(0))\\
&+\int_{\partial{D}\cap \mathcal{C}_{\varepsilon^{\frac{1}{3}-\tilde{\gamma}}}}\frac{\partial(w_1-w_1^{*})}{\partial\nu_0}\large\Big|_{+}\cdot (\varphi(x)-\varphi(0))\\
=&:\mathcal{B}_{\tilde{u}}+\mathcal{B}_{w}
\end{align*}
where $w_1=u_1-\tilde{u}_1,\ w_1^*=u_1^*-\tilde{u}_1^*$. By definition, 
\begin{align*}
\mathcal{B}_{\tilde{u}}=&\int_{\partial{D}\cap \mathcal{C}_{\varepsilon^{\frac{1}{3}-\tilde{\gamma}}}}\left\{\lambda \sum_{k=1}^d\partial_{x_1}(\tilde{u}_1^1-\tilde{u}_1^{*1})n_k(\varphi^k(x)-\varphi^k(0))\right.\\
&\quad\left.+\mu\sum_{k=1}^d\partial_{x_k}(\tilde{u}_1^1-\tilde{u}_1^{*1})\Big[n_1(\varphi^k(x)-\varphi^k(0))+n_k(\varphi^1(x)-\varphi^1(0))
\Big]\right\}\\
=&:\lambda\left(\mathcal{B}_{\tilde{u}}^{1}+\mathcal{B}_{\tilde{u}}^{2}\right)+\mu\left(\mathcal{B}_{\tilde{u}}^{3}+\mathcal{B}_{\tilde{u}}^{4}+\mathcal{B}_{\tilde{u}}^{5}+\mathcal{B}_{\tilde{u}}^{6}\right),
\end{align*}
where
$$\mathcal{B}_{\tilde{u}}^{1}:=\int_{\partial{D}\cap \mathcal{C}_{\varepsilon^{\frac{1}{3}-\tilde{\gamma}}}}\sum_{k=1}^{d-1}\partial_{x_1}(\tilde{u}_1^1-\tilde{u}_1^{*1})n_k(\varphi^k(x)-\varphi^k(0)),$$
$$\mathcal{B}_{\tilde{u}}^{2}:=\int_{\partial{D}\cap \mathcal{C}_{\varepsilon^{\frac{1}{3}-\tilde{\gamma}}}}\partial_{x_1}(\tilde{u}_1^1-\tilde{u}_1^{*1})n_d(\varphi^d(x)-\varphi^d(0)),$$
$$\mathcal{B}_{\tilde{u}}^{3}:=\int_{\partial{D}\cap \mathcal{C}_{\varepsilon^{\frac{1}{3}-\tilde{\gamma}}}}\sum_{k=1}^{d-1}\partial_{x_k}(\tilde{u}_1^1-\tilde{u}_1^{*1})n_1(\varphi^k(x)-\varphi^k(0)),$$
$$\mathcal{B}_{\tilde{u}}^{4}:=\int_{\partial{D}\cap \mathcal{C}_{\varepsilon^{\frac{1}{3}-\tilde{\gamma}}}}\partial_{x_{d}}(\tilde{u}_1^1-\tilde{u}_1^{*1})n_1(\varphi^d(x)-\varphi^d(0)),$$
$$\mathcal{B}_{\tilde{u}}^{5}:=\int_{\partial{D}\cap \mathcal{C}_{\varepsilon^{\frac{1}{3}-\tilde{\gamma}}}}\sum_{k=1}^{d-1}\partial_{x_k}(\tilde{u}_1^1-\tilde{u}_1^{*1})n_k(\varphi^1(x)-\varphi^1(0)),$$
and
$$\mathcal{B}_{\tilde{u}}^{6}:=\int_{\partial{D}\cap \mathcal{C}_{\varepsilon^{\frac{1}{3}-\tilde{\gamma}}}}\partial_{x_{d}}(\tilde{u}_1^1-\tilde{u}_1^{*1})n_d(\varphi^1(x)-\varphi^1(0)).$$

According to (\ref{estimate1}), (\ref{estimate1a}) and the Taylor expansion of $\varphi^k(x)$,
\begin{align}\label{est1}
&\left|\mathcal{B}_{\tilde{u}}^{1}\right|+\left|\mathcal{B}_{\tilde{u}}^{3}\right|+\left|\mathcal{B}_{\tilde{u}}^{5}\right|\nonumber\\
\leq&\int_{\partial{D}\cap \mathcal{C}_{\varepsilon^{\frac{1}{3}-\tilde{\gamma}}}}\frac{C}{|x'|}\cdot|x'|\cdot|\nabla_{x'}\varphi(0)||x'|+\int_{\partial{D}\cap \mathcal{C}_{\varepsilon^{\frac{1}{3}-\tilde{\gamma}}}}C\|\nabla^2\varphi\|_{L^\infty(\partial{D})}|x'|^2\nonumber\\
\leq&\,C|\nabla_{x'}\varphi (0)|\varepsilon^{(\frac{1}{3}-\tilde{\gamma})d}+C\varepsilon^{(\frac{1}{3}-\tilde{\gamma})(d+1)}\|\nabla^2\varphi\|_{L^\infty(\partial{D})};
\end{align}
\begin{align}\label{est2}
\left|\mathcal{B}_{\tilde{u}}^{2}\right|\leq&\int_{\partial{D}\cap \mathcal{C}_{\varepsilon^{\frac{1}{3}-\tilde{\gamma}}}}\frac{C}{|x'|}\cdot|\nabla_{x'}\varphi(0)||x'|+\int_{\partial{D}\cap \mathcal{C}_{\varepsilon^{\frac{1}{3}-\tilde{\gamma}}}}C\|\nabla^2\varphi\|_{L^\infty(\partial{D})}|x'|\nonumber\\
\leq&\,C|\nabla_{x'}\varphi(0)|\varepsilon^{(\frac{1}{3}-\tilde{\gamma})(d-1)}+C\|\nabla^2\varphi\|_{L^\infty(\partial{D})}\varepsilon^{(\frac{1}{3}-\tilde{\gamma})d};
\end{align}
and
\begin{align}\label{est3}
\left|\mathcal{B}_{\tilde{u}}^{4}\right|
\leq&\int_{\partial{D}\cap \mathcal{C}_{\varepsilon^{\frac{1}{3}-\tilde{\gamma}}}}\frac{C}{|x'|^2}\cdot|x'|\cdot|\nabla_{x'}\varphi(0)||x'|+\int_{\partial{D}\cap \mathcal{C}_{\varepsilon^{\frac{1}{3}-\tilde{\gamma}}}}C\|\nabla^2\varphi\|_{L^\infty(\partial{D})}|x'|\nonumber\\
\leq&\,C|\nabla_{x'}\varphi(0)|\varepsilon^{(\frac{1}{3}-\tilde{\gamma})(d-1)}+C\|\nabla^2\varphi\|_{L^\infty(\partial{D})}\varepsilon^{(\frac{1}{3}-\tilde{\gamma})d}.
\end{align}
For $d=2$ if $\nabla_{x'}\varphi^1(0)=0$, we have
\begin{align}\label{est4}
\left|\mathcal{B}_{\tilde{u}}^{6}\right|
\leq& \int_{\partial{D}\cap \mathcal{C}_{\varepsilon^{\frac{1}{3}-\tilde{\gamma}}}}C\|\nabla^2\varphi^1\|_{L^\infty(\partial{D})}\leq C\|\nabla^2\varphi^1\|_{L^\infty(\partial{D})}\varepsilon^{(\frac{1}{3}-\tilde{\gamma})(d-1)}.
\end{align}
If $d\geq3$, we have
\begin{align}\label{est4}
\left|\mathcal{B}_{\tilde{u}}^{6}\right|
\leq&\int_{\partial{D}\cap \mathcal{C}_{\varepsilon^{\frac{1}{3}-\tilde{\gamma}}}}\frac{C}{|x'|^2}\cdot|\nabla_{x'}\varphi(0)||x'|+\int_{\partial{D}\cap \mathcal{C}_{\varepsilon^{\frac{1}{3}-\tilde{\gamma}}}}C\|\nabla^2\varphi\|_{L^\infty(\partial{D})}\nonumber\\
\leq&\,C|\nabla_{x'}\varphi(0)|\varepsilon^{(\frac{1}{3}-\tilde{\gamma})(d-2)}+C\|\nabla^2\varphi\|_{L^\infty(\partial{D})}\varepsilon^{(\frac{1}{3}-\tilde{\gamma})(d-1)}.
\end{align}
Hence, combining (\ref{est1})--(\ref{est4}) yields that for $\varepsilon>0$ sufficiently small, if $d=2$ and $\nabla_{x'}\varphi^1(0)=0$, 
\begin{align}\label{u1-u1*0}
&\left|\mathcal{B}_{\tilde{u}}\right|\leq C\left(|\nabla_{x'}\varphi^{2}(0)|+\|\nabla^2\varphi\|_{L^\infty(\partial{D})}\right)\varepsilon^{\frac{1}{3}-\tilde{\gamma}};
\end{align}
and if $d\geq3$,
\begin{align}\label{u1-u1*0d3}
&\left|\mathcal{B}_{\tilde{u}}\right|\leq C\left(|\nabla_{x'}\varphi(0)|+\|\nabla^2\varphi\|_{L^\infty(\partial{D})}\right)\varepsilon^{(\frac{1}{3}-\tilde{\gamma})(d-2)}.
\end{align}

We now estimate $\mathcal{B}_{w}$. It follows from Corollary \ref{corol3.4}  that
\begin{align}\label{w-1}
|\nabla w_1(x)|\leq
\frac{C}{\sqrt{\delta(x)}},\quad 0<|x'|\leq R,
\end{align}
and
\begin{align}\label{w-1*}
|\nabla w_1^*(x)|\leq
\frac{C}{|x'|},\quad 0<|x'|\leq R.
\end{align}
By definition,
\begin{align*}
\mathcal{B}_{w}&=\int_{\partial{D}\cap \mathcal{C}_{\varepsilon^{\frac{1}{3}-\tilde{\gamma}}}}\left\{\lambda \sum_{k,l=1}^d\partial_{x_k}(w_1^k-w_1^{*k})n_l(\varphi^l(x)-\varphi^l(0))\right.
\\
&\quad+\left.\mu\sum_{k,l=1}^d[\partial_{x_l}(w_1^k-w_1^{*k})+\partial_{x_k}
(w_1^l-w_1^{*l})]n_l(\varphi^k(x)-\varphi^k(0))\right\}.
\end{align*}
By (\ref{w-1}), (\ref{w-1*}) and the Taylor expansion of $\varphi^l(x)$,  
\begin{align}\label{Bw}
\left|\mathcal{B}_{w}\right|
&\leq \int_{\partial{D}\cap \mathcal{C}_{\varepsilon^{\frac{1}{3}-\tilde{\gamma}}}}
\frac{C}{|x'|}\cdot\left(|\nabla_{x'}\varphi^l(0)||x'|+\|\nabla^2\varphi^l\|_{L^\infty(\partial{D})}|x'|^2\right)\nonumber\\
&\leq C|\nabla_{x'}\varphi^l(0)|\varepsilon^{(\frac{1}{3}-\tilde{\gamma})(d-1)}+\|\nabla^2\varphi^l\|_{L^\infty(\partial{D})}\varepsilon^{(\frac{1}{3}-\tilde{\gamma})d}\nonumber\\
&\leq\,C\left(|\nabla_{x'}\varphi(0)|+\|\nabla^2\varphi\|_{L^\infty(\partial{D})}\right)\varepsilon^{(\frac{1}{3}-\tilde{\gamma})(d-1)}.
\end{align}
This, together with (\ref{u1-u1*0}), implies that, for $d=2$, if $\nabla_{x'}\varphi^1(0)=0$,
\begin{align*}
\left|\mathcal{B}^{in}\right|\leq|\mathcal{B}_{\tilde{u}}|+|\mathcal{B}_{w}|
\leq C\left(|\nabla_{x'}\varphi(0)|+\|\nabla^2\varphi\|_{L^\infty(\partial{D})}\right)\varepsilon^{\frac{1}{3}-\tilde{\gamma}}.
\end{align*}
Combining with \eqref{B_out}, we now simply choose $\tilde{\gamma}=\gamma_{2}=1/6$, such that $\frac{1}{3}-\gamma_{2}=\gamma_{2}$. Thus, we have, for $d=2$,
\begin{align*}
|b_1-b_1^*|\leq|\mathcal{B}^{in}|+|\mathcal{B}^{out}|\leq\,C\left(|\nabla_{x'}\varphi^{2}(0)|+\|\nabla^2\varphi\|_{L^\infty(\partial{D})}\right)\varepsilon^{1/6}.
\end{align*}
For $d\geq3$, combining \eqref{Bw} together with (\ref{u1-u1*0d3}) yields that
\begin{align*}
\left|\mathcal{B}^{in}\right|\leq|\mathcal{B}_{\tilde{u}}|+|\mathcal{B}_{w}|
\leq C\left(|\nabla_{x'}\varphi(0)|+\|\nabla^2\varphi\|_{L^\infty(\partial{D})}\right)\varepsilon^{(\frac{1}{3}-\tilde{\gamma})(d-2)}.
\end{align*}
Therefore, using \eqref{B_out} again and picking $\tilde{\gamma}=\gamma_{d}=\frac{d-2}{3(d-1)}$ (such that $(\frac{1}{3}-\gamma_{d})(d-2)=\gamma_{d}$), we have, for $d\geq3$,
\begin{align*}
|b_1-b_1^*|\leq|\mathcal{B}^{in}|+|\mathcal{B}^{out}|\leq\,C\left(|\nabla_{x'}\varphi(0)|+\|\nabla^2\varphi\|_{L^\infty(\partial{D})}\right)\varepsilon^{\gamma_{d}}.
\end{align*}
The proof of Lemma \ref{lemma5.1} is completed.
\end{proof}

\begin{proof}[Proof of Theorem \ref{thm1.2}]  Under the assumptions of Theorem \ref{thm1.2} that $b_{k_{0}}^{*}\neq0$ for some integer $1\leq\,k_0\leq\,d$, it follows from Lemma \ref{lemma5.1} that there exists a universal constant $C_{0}>0$ and a sufficiently small number $\varepsilon_0>0$, such that, for $0<\varepsilon<\varepsilon_0$,
\begin{align}\label{bk0}
|b_{k_0}|>\frac{C_{0}}{2}>0.
\end{align}

By the definition of  $A^*=(a_{\alpha\beta}^*)_{d\times{d}}$, where $a_{\alpha\beta}^*$ is the cofactor of $a_{\alpha\beta}$, and  Lemma \ref{lemma4.2}, we have
 \begin{align}\label{a*}a_{\alpha\alpha}^*\sim \frac{1}{(\rho_d(\varepsilon))^{d-1}},
 ~\alpha=1,\cdots,d;\quad a_{\alpha\beta}^*\sim \frac{1}{(\rho_d(\varepsilon))^{d-2}},\quad\alpha\neq\beta.\end{align}
According to (\ref{4.18}),  (\ref{errors}), (\ref{X1}),  (\ref{bk0}) and (\ref{a*}), for sufficiently small $\varepsilon$, 
 \begin{align}\label{C-k0}
 |C^{k_0}-\varphi^{k_0}(0)|&=\left|\frac{1}{\det A}\Big[a_{k_0k_0}^*b_{k_0}+\sum_{\beta\neq\,k_{0}}a_{k_0\beta}^*b_\beta\Big]+Errors\right|\nonumber\\
 &\geq\frac{1}{2}\frac{1}{\det A}a_{k_0k_0}^*|b_{k_0}|\geq\frac{\rho_d(\varepsilon)}{C}.
 \end{align}

On the other hand, in view of Corollary \ref{corol3.4}, we obtain
\begin{align}\label{u-k01}
|\partial_{x_{d}}u_{k_0}^{k_0}|&=|\partial_{x_{d}}\tilde{u}_{k_0}^{k_0}+\partial_{x_{d}}(u_{k_0}^{k_0}-\tilde{u}_{k_0}^{k_0})|\nonumber\\
&\geq|\partial_{x_{d}}\tilde{u}_{k_0}^{k_0}|-|\partial_{x_{d}}(u_{k_0}^{k_0}-\tilde{u}_{k_0}^{k_0})|\geq\frac{1}{C(\varepsilon+|x'|^2)},\quad x\in\Omega_R.
\end{align}
At the same time, since $\tilde{u}_\alpha^{k_0}=0$ if $\alpha\neq\,k_{0}$, it is easy to see from Corollary \ref{corol3.4} that
\begin{align}\label{u-k02}
|\partial_{x_{d}}u_\alpha^{k_0}|&=|\partial_{x_{d}}\tilde{u}_\alpha^{k_0}+\partial_{x_{d}}(u_\alpha^{k_0}-\tilde{u}_\alpha^{k_0})|\nonumber\\
&=|\partial_{x_{d}}(u_\alpha^{k_0}-\tilde{u}_\alpha^{k_0})|\leq\frac{C }{\sqrt{\varepsilon+|x'|^2}},\quad\alpha\neq k_0,\quad x\in\Omega_R.
\end{align}
Therefore, by a combination of the estimates \eqref{C-k0}, (\ref{u-k01}), (\ref{u-k02}), we get,  for $(0',x_{d})\in\overline{P_{1}P}$,
\begin{align}\label{ck}
\left|\sum_{\alpha=1}^d(C^{\alpha}-\varphi^{\alpha}(0))\nabla u_\alpha\right|&\geq\left|\sum_{\alpha=1}^d(C^{\alpha}-\varphi^{\alpha}(0))\partial_{x_d} u_\alpha^{k_0}\right|\nonumber\\
&\geq\left|(C^{k_0}-\varphi^{k_0}(0))\partial_{x_d} u_{k_0}^{k_0}\right|-\left|\sum_{\alpha\neq k_0}^d(C^{\alpha}-\varphi^{\alpha}(0))\partial_{x_d} u_\alpha^{k_0}\right|\nonumber\\
&\geq\frac{\rho_d(\varepsilon)}{C\varepsilon}.
\end{align}
Here we used the assumption that $b_{\alpha}^{*}=0$ for $\alpha\neq\,k_{0}$, $1\leq\alpha\leq\,d$ when $d\geq4$.
By means of Corollary \ref{corol2.2}, \eqref{C3}, Lemma \ref{lemma4.2} and Lemma \ref{lemma4.3}, especially for $x=(0',x_{d})\in\overline{P_{1}P}$,
\begin{align}
&\left|\sum_{\alpha=d+1}^{\frac{d(d+1)}{2}}C^{\alpha}\nabla u_{\alpha}\right|\leq\frac{C(\varepsilon+|x'|)}{\varepsilon+|x'|^2}\leq\,C,\label{thm1.2-2}\\
&|\nabla u_0|\leq \frac{C|\nabla\varphi(0)||x'|}{\varepsilon+|x'|^2}+C\leq\,C.\label{thm1.2-3}
\end{align}
 Combining (\ref{ck}), (\ref{thm1.2-2}), (\ref{thm1.2-3}) and \eqref{decomposition_nablau} immediately yields that for $x=(0',x_{d})\in\overline{P_{1}P}$,
\begin{align*}
|\nabla u(0', x_{d})|\geq\frac{\rho_d(\varepsilon)}{C\varepsilon},\quad 0<x_{d}<\varepsilon.
\end{align*}
Theorem \ref{thm1.2} is thus established.
\end{proof}

\vspace{1cm}

\noindent{\bf{\large Acknowledgements.}} H.G. Li would like to thank Professor YanYan Li for his encouragements and constant supports.

\bibliographystyle{plain}

\def\cprime{$'$}

\end{document}